   \def\MR#1{}
\newcommand{\Z}{\mathbb{Z}}
\newcommand{\N}{\mathbb{N}}
\newcommand{\Q}{\mathbb{Q}}
\newcommand{\E}{\mathbb{E}}
\newcommand{\kk}{\mathbb{K}}
\newcommand{\pdim}{\operatorname{pdim}}
\newcommand{\reg}{\operatorname{reg}}
\newcommand{\depth}{\operatorname{depth}}
\newcommand{\Tor}{\operatorname{Tor}}
\newcommand{\lk}{\operatorname{lk}}
\newcommand{\st}{\operatorname{st}}
\newtheorem{theorem}{Theorem}
\newtheorem{lemma}[theorem]{Lemma}
\newtheorem{proposition}[theorem]{Proposition}
\newtheorem{corollary}[theorem]{Corollary}
\newtheorem{remark}[theorem]{Remark}
\theoremstyle{definition}
\newtheorem{conjecture}[theorem]{Conjecture}
\newtheorem{example}[theorem]{Example}
\newcommand\defi[1]{\textit{\color{blue}#1}}
\definecolor{darkblue}{rgb}{0,0,0.7}
\definecolor{darkred}{rgb}{0.7,0,0}
\title{Random subcomplexes and Betti numbers of random edge ideals}
\author{Anton Dochtermann}
\address{Texas State University}
\email{dochtermann@txstate.edu}
\urladdr{\url{https://dochtermann.wp.txstate.edu/}}
\author{Andrew Newman}
\address{Carnegie Mellon University}
\email{anewman@andrew.cmu.edu}
\urladdr{\url{https://sites.google.com/view/andrewnewman775/home/}}
\date{\today}
\begin{document}

\begin{abstract}
We study homological properties of random quadratic monomial ideals in a polynomial ring $R = \kk[x_1, \dots x_n]$, utilizing methods from the Erd\H{o}s--R\'enyi model of random graphs. Here for a graph $G \sim G(n, p)$ we consider the coedge ideal $I_G$ generated by monomials corresponding to the missing edges of $G$, and study Betti numbers of $R/I_G$ as $n$ tends to infinity. Our main results involve setting the edge probability $p = p(n)$ so that asymptotically almost surely the Krull dimension of $R/I_G$ is fixed. Under these conditions we establish various properties regarding the Betti table of $R/I_G$, including sharp bounds on regularity and projective dimension and distribution of nonzero normalized Betti numbers. These results extend work of Erman and Yang who studied such ideals in the context of conjectured phenomena in the nonvanishing of asymptotic syzygies.  Along the way we establish results regarding subcomplexes of random clique complexes as well as notions of higher-dimensional vertex $k$-connectivity that may be of independent interest.
\end{abstract}
\maketitle

\section{Introduction}\label{sec:intro}
Suppose $\kk$ is a field and let $R = \kk[x_1, x_2, \dots, x_n]$ denote its polynomial ring in $n$ variables. If $M$ is a graded $R$-module, a minimal free resolution of $M$ gives rise to several homological invariants reflected in its \emph{Betti table}, where the entry in row $k$ and column $i$ is given by the Betti number $\beta_{i,i+k}(M)$. In recent years there has been interest in understanding the ``typical'' shape of these Betti tables as $n$ becomes large, especially for the case of a monomial ideal $I$ and its quotient ring $R/I$. Questions regarding such ``asymptotic syzygies'' are outside the range computable by current software, and hence require new techniques.

In this paper we study Betti tables of random quadratic squarefree monomial ideals via the Erd\H{o}s--R\'enyi model of random graphs.  We let $G(n,p)$ denote the Erd\H{o}s--R\'enyi random graph on $n$ vertices with edge probability $p = p(n)$ and construct the \emph{coedge ideal} $I_G$, the squarefree quadratic monomial ideal whose generators correspond to all missing edges of $G$. The ideal $I_G$ can also be recovered as the \emph{Stanley--Reisner ideal} of $\Delta(G)$, where $\Delta(G)$ is the \emph{clique complex} of $G$.  Hochster's formula provides a way to compute the Betti numbers of $I_G$ (and hence the quotient ring $R/I_G$) in terms of the topology of subcomplexes of $\Delta(G)$. Our approach will be to utilize and extend methods from the study of random clique complexes, first introduced in \cite{Kah2014}, to study the Betti table of $R/I_G$. In this context we use $\Delta(n,p)$ to denote the random clique complex $\Delta(G(n,p))$.

In \cite{ErmYan} Erman and Yang take a similar approach, motivated by conjectures in algebraic geometry regarding the behavior of asymptotic syzygies of increasingly positive embeddings of a variety (see for instance \cite{EinErmLaz}).  They adopt the same Erd\H{o}s--R\'enyi model of a random coedge ideal to produce a family $\{I_n\}$ of random ideals where $\pdim(I_n) \rightarrow \infty$.  In  \cite[Theorem 1.3]{ErmYan} Erman and Yang prove that if $G \sim G(n,p)$ with $\frac{1}{n^{1/d}} \ll p \ll 1$, then for all $k \leq d+1$ the density of nonzero entries in the $k$th row of the Betti table of $R/I_G$ will approach one in probability. We refer to Section \ref{sec:log} for more details.

Erman and Yang also establish some results on the individual Betti numbers of random coedge ideals in certain regimes for $p$. For instance if we take $p=n^{- \alpha}$ then for any \emph{fixed} $i$ and $k$ we have $\beta_{i, i+k}(R/I_G) = 0$ with high probability if $\alpha > 1/(k- 1)$, whereas $\beta_{i, i+k}(R/I_G) \neq 0$ if $1/(k - 1) > \alpha > 0$ \cite[Theorem 1.6]{ErmYan}. The authors use this result to provide a lower bound for the regularity of coedge ideals for these choices of $p$.  For $p = n^{-\alpha}$ with $1/(d + 1) < \alpha < 1/d$ these results provide an asymptotic description of what the Betti table of a random coedge ideal looks like in all rows between $1$ and $d+1$ and in all columns between $1$ and any constant.  The question of what happens for rows larger than $d+1$, as well as what the Betti table looks like ``on the right side'' is partly what motivates our work.

The methods employed in \cite{ErmYan} use Hochster's formula to interpret the Betti numbers of coedge ideals in terms of the topology of induced subcomplexes of the clique complex of the underlying graph. A main strategy of \cite{ErmYan} is to analyze the appearance of high-dimensional cross-polytopes as induced subcomplexes. As these complexes carry nonzero integral homology, most of the results in \cite{ErmYan} are independent of the underlying coefficient field ${\mathbb K}$.  This approach has also been fruitful in the study of clique complexes of random graphs (often referred to as \emph{random clique complexes}), which is itself an emerging field. We refer to \cite{Kahsurvey} for a good survey or Section \ref{sec:background} for more details relevant to our study.


\subsection{Our results}
In this paper we further utilize and extend methods from the study of random clique complexes to obtain new results regarding algebraic properties of random quadratic monomial ideals. We employ a range of techniques including notions of collapsibility and Garland's method regarding the spectral gap of graph Laplacians. In this way we answer some questions from \cite{ErmYan} as well as provide new ways to understand the underlying algebraic objects.   We emphasize that some of our results will depend on the choice of the coefficient field ${\mathbb K}$ of the underlying polynomial ring $R$.

As is typical in the study of random objects, our results are asymptotic. The Erd\H{o}s--R\'{e}nyi random graph $G(n, p)$ is the probability space of graphs on $n$ vertices sampled by including each edge independently with probability $p$. Here $p = p(n)$ is often a function of $n$ that tends to 0 at some rate as $n$ tends to infinity. For a graph property $\mathcal{P}$ one says that $G \sim G(n, p)$ has property $\mathcal{P}$ \defi{with high probability}, abbreviated w.h.p, or \defi{asymtotically almost surely}, abbreviated a.a.s., provided the probability that $G$ has property $\mathcal{P}$ tends to 1 as $n$ tends to infinity. By passing to the coedge ideal of a random graph $G$ we can also study asymptotic behavior of $R/I_G$ with respect to certain algebraic properties. 

Our primary approach here involves setting the edge probability of our underlying random graph $G(n,p)$ to be $p(n) = n^{-\alpha}$ for some $\alpha \in (0,\infty)$. In this sparse regime standard results from random graph theory imply that the dimension of the underlying clique complex is bounded in terms of $\alpha$ with high probability. In particular we have that if $\alpha \in (2/(k + 1), 2/k)$ then $G(n,p)$ will have clique number $k+1$ with probability tending to one, see Section \ref{sec:background} for more details.  It is known that the Krull dimension of $R/I_G$ is given by one plus the dimension of the clique complex of $G$, and hence is equal to the clique number of $G$.   Therefore fixing $p$ in this range provides a model for a random quadratic monomial ideal where $R/I_G$ has fixed Krull dimension. Our results describe typical algebraic properties of such rings.

\begin{example}\label{ex:running}
As an example of our construction we take $n = 20$ and $\alpha = 0.27$ and a single instance of $G \sim G(n, n^{-\alpha})$.  We see the Betti table of $R/I_G$ in Table \ref{sampleBettiTable}, as computed in GAP \cite{GAP4} using the \texttt{simpcomp} package \cite{simpcomp} for computing simplicial homology.
\end{example}

\begin{table}[h]
\begin{tiny}
\begin{tabular}{cccccccccccccccccc}
0 & 1 & 2 & 3 & 4 & 5 & 6 & 7 & 8 & 9 & 10 & 11 & 12 & 13 & 14 & 15 & 16 & 17   \\ \hline
1 & 0 & 0 & 0 & 0 & 0 & 0 & 0 & 0 & 0 & 0 & 0 & 0 & 0 & 0 & 0 & 0 & 0 \\
- & 106 & 867 & 3506 & 8852 & 15496 & 20257 & 20942 & 17682 & 12213 & 6764 & 2914 & 938 & 212 &  30 &  2 & 0 & 0   \\
 - & - & 175 & 2472 & 15558 & 58439 & 148003 & 270285 & 370495 & 390104 & 318911 & 202556 &  99092 & 36628 & 9890 & 1839 & 210 & 11    \\
 - & - & - & 2 & 30 & 207 & 871 & 2498 & 5170 & 7975 & 9334 & 8348 & 5686 & 2903 & 1075 & 272 & 42 & 3  \\
\end{tabular} 
\end{tiny}
\caption{The Betti table for $R/I_G$ with $G$ sampled from $G(n,n^{-0.27})$ with $n=20$.  All Betti numbers that are not indicated are zero.}\label{sampleBettiTable}
\end{table}

Although the Krull dimension of $R/I_G$ is fixed and finite in the regime $p=n^{-\alpha}$, it is not hard to see that the projective dimension grows linearly with $n$.  Furthermore, the results of Erman and Yang discussed above imply that if we take $G \sim G(n,p)$ with $p = n^{-\alpha}$ and $1/(d + 1) < \alpha < 1/d$ then almost every entry in the first $d + 1$ rows of the Betti table of $R/I_G$ is nonzero.   We will be interested in the entries below row $d+1$, as well as the shape of the Betti table of $R/I_G$ on the ``right side''.

We first turn to the vanishing of Betti numbers below row $d + 1$ and in particular the \emph{regularity of $R/I_G$}, by definition the largest row of the Betti table that has a nonzero entry. Results from \cite{ErmYan} imply that if $p = n^{-\alpha}$ for $1/(d + 1) < \alpha < 1/d$ then the regularity of $R/I_G$ satisfies
\[d+1 \leq \reg(R/I_G) \leq 2d.\]
Erman and Yang ask if this bound can be improved \cite[Question 5.3]{ErmYan}, and our next result gives a precise answer for any choice of coefficient field. We extend a result of Malen \cite{Malen} regarding collapsibility for random clique complexes to prove that with high probability all induced complexes of $\Delta(n,n^{-\alpha})$ have vanishing integral homology in dimension larger than $d$ (see Section \ref{sec:proj}). As a result we obtain the following.

\begin{theorem}\label{RegularityTheorem}
Let $d$ be any positive integer, fix $\alpha$ with $1/(d+1) < \alpha < 1/d$, and let  $G \sim G(n, n^{-\alpha})$. Then for any field ${\kk}$ of coefficients, with high probability we have  \[\reg(R/I_G) = d + 1.\] 
\end{theorem}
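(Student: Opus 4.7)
The plan is to use Hochster's formula to reduce the statement to two homological assertions about induced subcomplexes of $\Delta(G)$, and then combine Malen's collapsibility theorem with the lower bound already extracted from Erman--Yang.

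Hochster's formula gives
\[
\beta_{i,i+k}(R/I_G) \;=\; \sum_{\substack{W \subseteq [n]\\ |W|=i+k}} \dim_{\kk} \tilde{H}_{k-1}\bigl(\Delta(G)[W];\,\kk\bigr),
\]
so $\reg(R/I_G) \;=\; 1 + \max\{\,j \colon \tilde{H}_j(\Delta(G)[W];\kk) \neq 0 \text{ for some } W \subseteq [n]\,\}$. It therefore suffices to show a.a.s.\ that (a) some induced subcomplex of $\Delta(G)$ has nonzero $\tilde{H}_d(\,\cdot\,;\kk)$, and (b) for every field $\kk$, every induced subcomplex of $\Delta(G)$ has vanishing $\tilde{H}_j(\,\cdot\,;\kk)$ for every $j \geq d+1$.

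For (a), since $\alpha < 1/d$ implies $p = n^{-\alpha} \gg n^{-1/d}$, the Erman--Yang theorem recalled in the introduction gives that a.a.s.\ the $(d+1)$-st row of the Betti table of $R/I_G$ is dense in nonzero entries, and in particular nontrivial, so $\reg(R/I_G) \geq d+1$. For (b), I would invoke Malen's theorem that in the regime $\alpha > 1/(d+1)$ the random clique complex $\Delta(n, n^{-\alpha})$ is a.a.s.\ $(d+1)$-collapsible. Since $(d+1)$-collapsibility implies $(d+1)$-Lerayness (Wegner's theorem), every induced subcomplex of $\Delta(G)$ has vanishing reduced integer homology, and hence vanishing $\kk$-homology for every $\kk$, in every dimension $\geq d+1$. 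Combining (a) and (b) yields $\reg(R/I_G) = d+1$ a.a.s.

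The main obstacle is (b). In this regime the clique number of $G$ is approximately $2/\alpha$, so $\Delta(G)$ itself has dimension roughly $2d$, and induced subcomplexes can a priori carry nontrivial homology in every intermediate dimension between $d+1$ and $2d$. Disposing of these classes on dimensional grounds is hopeless, so the real work is done by Malen's collapsibility result, which provides an explicit elementary-collapse scheme that kills all higher-dimensional induced cycles at once. A secondary (routine) point to verify is that Malen's hypothesis on $\alpha$ matches the interval $(1/(d{+}1),\,1/d)$ in the theorem, and that the collapsibility bound transfers uniformly to all induced subcomplexes in the sense required by Wegner's theorem.
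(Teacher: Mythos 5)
Your proposal is essentially correct and follows the same structural outline as the paper's proof (split into lower bound via Hochster, upper bound via Malen's collapsibility), but the two arguments differ at one interesting point, and there is a subtlety you should flag.

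For the lower bound you invoke the Erman--Yang density theorem, whereas the paper uses Kahle's Theorem~\ref{KahleRationalHomology} ($\tilde H_d(\Delta(G);\Q)\neq 0$ w.h.p.) followed by the universal coefficient theorem to pass to an arbitrary field. Both are valid; the Kahle route is slightly more direct and makes it transparent that the nonvanishing is coefficient-independent.

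For the upper bound, you cite Malen's probabilistic Theorem~\ref{thm:collapsible} as a black box and then apply Wegner's theorem ($(d{+}1)$-collapsible $\Rightarrow$ $(d{+}1)$-Leray). The paper instead opens up Malen's proof: it uses the \emph{deterministic} condition in Theorem~\ref{DeterministicCollapsibility} (every strongly connected pure $(d{+}1)$-subcomplex has a vertex of degree $\leq 2d{+}1$) together with Malen's Lemma 3.6, observes that this hypothesis is monotone under passage to induced subcomplexes, and concludes that every induced subcomplex is itself $(d{+}1)$-collapsible. In effect the paper reproves, in this specific situation, the consequence of Wegner's theorem that it needs, without ever invoking Wegner by name.

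Your shortcut via Wegner is legitimate, but it silently relies on Malen's $(d{+}1)$-collapsibility being collapsibility in the sense of Tancer/Wegner, i.e.\ the free faces removed all have dimension $\leq d$. The definition recorded in Section~\ref{sec:background} of the paper is weaker: it only requires \emph{some} sequence of elementary collapses terminating in a complex of dimension $\leq d$, with no constraint on the dimensions of the free faces removed along the way. That weaker notion does \emph{not} imply $(d{+}1)$-Lerayness (consider a cone over a high-dimensional sphere: it is collapsible to a point, yet the base sphere is an induced subcomplex with nonvanishing top homology). So if you are going to invoke Wegner, you must either cite Malen's definition directly (which does bound the dimension of the free faces, as is visible from the proof of Theorem~\ref{DeterministicCollapsibility}) or do what the paper does and propagate the deterministic hypothesis to all induced subcomplexes. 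Your caveat at the end that the collapsibility must ``transfer uniformly to all induced subcomplexes in the sense required by Wegner's theorem'' is exactly this point, but it is not quite ``routine'' given the definition as stated in the paper; it deserves a sentence making the Tancer/Wegner notion explicit.
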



Next we turn to the right-hand side of the Betti table of a random coedge ideal.  Here we consider the \emph{projective dimension} of $R/I_G$, by definition the largest column of the Betti table that has a nonzero entry. Hilbert's Syzygy Theorem provides the upper bound $\pdim(R/I_G) \leq n$ and knowledge of the Krull dimension of $R/I_G$ provides a trivial lower bound of $n - (2d + 2)$ for any choice of coefficients.  As we discuss in Section \ref{sec:proj}, an easy application of Hochster's formula and known results about random clique complexes in fact implies that $\pdim(R/I_G) \geq n - (d+1)$. We are able to sharpen the upper bound for arbitrary coefficients and provide the precise value of $\pdim(R/I_G)$ for the case $\kk = {\mathbb Q}$.



\begin{theorem}\label{PdimTheorem}
Suppose $p = n^{-\alpha}$ with $1/(d+1) < \alpha < 1/d$ and let $G = G(n,p)$. Then the projective dimension of $R/I_G$ can be bounded as follows.

\begin{enumerate}
    \item For any coefficient field $\kk$ we have with high probability
\[n-(d+1) \leq \pdim(R/I_G) \leq n-(d/2).\]
\item
For the case $\kk = {\mathbb Q}$ we have with high probability
\[\pdim(R/I_G) = n-(d+1).\]
\end{enumerate}
\end{theorem}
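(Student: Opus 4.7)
The plan is to prove the lower bound $\pdim(R/I_G) \geq n - (d+1)$ first, since it covers both parts, and then to establish the two upper bounds separately via Hochster's formula
\[
\beta_{i,i+k}(R/I_G) = \sum_{|W| = i+k} \dim_{\kk} \tilde{H}_{k-1}(\Delta(G)_W; \kk).
\]

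\emph{Lower bound.} A standard second moment calculation shows that for $p = n^{-\alpha}$ with $\alpha < 1/d$, with high probability $G$ contains an induced copy of the complete multipartite graph $K_{2,2,\ldots,2}$ on $2(d+1)$ vertices (this is essentially the cross-polytope computation underlying the Erman--Yang lower bound on regularity). Its clique complex is the boundary of the $(d+1)$-dimensional cross polytope, a triangulated $d$-sphere, so the corresponding induced subcomplex of $\Delta(G)$ carries a nontrivial integral $d$-cycle. Because $\alpha > 1/(d+1)$ forces the clique number of $G$ to equal $d+1$ a.a.s., $\Delta(G)$ itself has no $(d+1)$-simplices, and so this $d$-cycle is nonzero in $\tilde{H}_d(\Delta(G); \kk)$ for every coefficient field $\kk$. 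Taking $W = [n]$ in Hochster's formula then yields $\beta_{n-d-1,n}(R/I_G) \neq 0$, hence $\pdim(R/I_G) \geq n-(d+1)$.

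\emph{Upper bound for general $\kk$.} Translating $\pdim(R/I_G) \leq n - \lceil d/2 \rceil$ through Hochster's formula, it suffices to show that a.a.s., for every $W \subseteq [n]$ with $|W| = n - s$, one has $\tilde{H}_h(\Delta(G)_W; \kk) = 0$ whenever $h < \lceil d/2 \rceil - 1 - s$. I would proceed in two steps. First, use the collapsibility results of Malen for random clique complexes to show that $\Delta(G)$ itself is $(\lceil d/2 \rceil - 2)$-connected in $\kk$-homology with high probability, which handles the case $s = 0$. Second, propagate this to all induced subcomplexes on $n-s$ vertices via the higher-dimensional vertex $k$-connectivity framework signaled in the abstract, which is designed so that deleting a single vertex reduces the guaranteed connectivity range by at most one.

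\emph{Sharpening over $\mathbb{Q}$.} For $\pdim(R/I_G) \leq n-(d+1)$ one instead needs $\tilde{H}_h(\Delta(G)_W; \mathbb{Q}) = 0$ for all $W$ with $|W| = n - s$ and all $h < d - s$. Over $\mathbb{Q}$ this is accessible via Garland's spectral method: in the regime $\alpha < 1/d$, the Laplacian spectral gaps of the links in $\Delta(n,p)$ can be controlled well enough to force vanishing of rational cohomology in every dimension strictly below $d$. Combining this $\mathbb{Q}$-acyclicity of $\Delta(G)$ with the same vertex-deletion robustness used in the general case yields the sharp bound. The main obstacle in both upper bounds is precisely this vertex-deletion step: a naive union bound over the $\binom{n}{s}$ induced subcomplexes is too wasteful, so the work lies in isolating a higher-dimensional vertex $k$-connectivity notion that is both delivered by the base connectivity of $\Delta(G)$ (through Malen or Garland) and automatically inherited by its induced subcomplexes with only a linear loss in $s$.
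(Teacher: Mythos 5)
Your plan diverges from the paper's argument in two places, and both diverge in ways that break the proof.

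\emph{Lower bound.} You claim that since $\alpha > 1/(d+1)$ the clique number of $G$ is $d+1$ a.a.s.\ and hence $\Delta(G)$ has no $(d+1)$-simplices, so the induced cross-polytope $d$-cycle cannot bound. This is false: Corollary~\ref{cor:cliques} shows that for $1/(d+1)<\alpha<1/d$ the clique number of $G(n,n^{-\alpha})$ is $2d+1$ or $2d+2$, so $\dim\Delta(G)$ is $2d$ or $2d+1$, far above $d$. The octahedral $d$-sphere therefore sits inside a complex with plenty of $(d+1)$-faces and could well be a boundary; a second-moment count of induced $K_{2,\dots,2}$'s gives nonzero $d$-homology of an \emph{induced subcomplex} (which is enough for the Erman--Yang regularity lower bound, where the relevant $W$ has $2(d+1)$ vertices), but not nonzero $\tilde H_d$ of $\Delta(G)$ itself, which is what $\beta_{n-(d+1),n}\neq 0$ requires. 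The paper instead invokes Kahle's theorem that $\tilde H_d(\Delta(n,n^{-\alpha});\Q)\neq 0$ w.h.p.\ (Theorem~\ref{KahleRationalHomology}) and then uses universal coefficients to pass to arbitrary $\kk$.

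\emph{Upper bounds.} You propose to get low-dimensional $\kk$-acyclicity of $\Delta(G)$ from Malen's collapsibility results, but those are the wrong tool: $(d+1)$-collapsibility (Theorem~\ref{thm:collapsible}) kills homology \emph{above} degree $d$ and says nothing about vanishing in degrees $<\lceil d/2\rceil$. Malen's work is what drives the \emph{regularity} bound in Theorem~\ref{RegularityTheorem}, not the projective-dimension upper bound. The paper proves the general-$\kk$ bound via Kahle's deterministic common-neighbor criterion for topological $k$-connectedness (Theorem~\ref{commonneighborlemma}) together with a two-parameter spectral-gap lemma (Lemma~\ref{twoparameter}), and the $\Q$-bound via Garland's method applied to links. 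Finally, your instinct that ``a naive union bound over the $\binom{n}{s}$ induced subcomplexes is too wasteful'' is off: the paper \emph{does} union-bound over the $O(n^L)$ choices of deleted vertex sets, and the whole point of Lemma~\ref{twoparameter} and the Hoffman--Kahle--Paquette input is to push the failure probability for a \emph{single} deleted set down to $o(n^{-M})$ for every constant $M$, so that the union bound goes through. There is no inheritance argument passing connectivity from $\Delta(G)$ to its induced subcomplexes; instead, each subcomplex obtained by deleting $L$ vertices is itself distributed as $\Delta(n-L,n^{-\alpha})$ and the acyclicity argument is re-run there with strong probability bounds.
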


We prove Theorem \ref{PdimTheorem} by relating $\pdim(R/I_G)$ to a family of graph parameters $\kappa_{\kk}^i$, first introduced by Babson and Welker in \cite{BabWel}.  These parameters involve the homology of subcomplexes of $\Delta(G)$ obtained by removing at most $\ell$ vertices, generalizing the notion of an $\ell$-vertex-connected graph. We discuss these ideas in Section \ref{sec:combinatorialdescriptions}.  The calculation of $\kappa_{\kk}^i$ for the case $\kk = {\mathbb Q}$ involves spectral graph theory and an application of Garland's method.  We review those notions in Section \ref{sec:spectral}.

By the Auslander-Buchsbaum formula, our bounds on projective dimension of $R/I_G$ in turn lead to bounds on the \emph{depth} of $R/I_G$.  In particular we see that $\depth(R/I_G) \leq d+1$ for any field of coefficients, with equality in the case that $\kk = {\mathbb Q}$. This is turn gives a good description of the (lack of) Cohen--Macaulay properties of $R/I_G$.  In the following we use $\dim(M)$ to denote the \emph{Krull dimension} of an $R$-module $M$.

\begin{corollary}\label{cor:depthCM}
For $d \geq 1$ suppose $p = n^{-\alpha}$ with $1/(d+1) < \alpha < 1/d$, and let $G = G(n,p)$. Then for any coefficient field $\kk$, with high probability we have
\[\depth(R/I_G) \leq \left \lceil \frac{\dim(R/I_G)}{2} \right \rceil.\] 

\noindent
In particular, with high probability $R/I_G$ is not Cohen--Macaulay over any field.
\end{corollary}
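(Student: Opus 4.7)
The plan is to combine the lower bound on projective dimension from Theorem \ref{PdimTheorem}(1) with the Auslander--Buchsbaum formula, and then identify the Krull dimension of $R/I_G$ with the clique number of $G$ via standard random graph theory.

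First, since $R$ is a polynomial ring and $\pdim(R/I_G)$ is finite, the Auslander--Buchsbaum formula gives $\depth(R/I_G) = n - \pdim(R/I_G)$. Theorem \ref{PdimTheorem}(1) yields $\pdim(R/I_G) \geq n - (d+1)$ with high probability, so
\[ \depth(R/I_G) \leq d+1 \]
w.h.p., regardless of the coefficient field $\kk$.

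Next I would identify the Krull dimension with the clique number: $\dim(R/I_G) = \omega(G)$. The interval $(1/(d+1), 1/d) = (2/(2d+2), 2/(2d))$ straddles the clique-number threshold $2/(2d+1)$, so standard Erd\H{o}s--R\'enyi clique-number concentration places $\omega(G) \in \{2d+1,\, 2d+2\}$ with high probability: for $\alpha$ just below $1/d$ the clique number is a.a.s.\ $2d+1$, while for $\alpha$ just above $1/(d+1)$ it is a.a.s.\ $2d+2$. In either case $\dim(R/I_G) \geq 2d+1$ and hence $\lceil \dim(R/I_G)/2 \rceil \geq \lceil (2d+1)/2 \rceil = d+1$ w.h.p. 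Intersecting the two high-probability events gives
\[ \depth(R/I_G) \leq d + 1 \leq \left\lceil \dim(R/I_G)/2 \right\rceil, \]
as required.

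For the non-Cohen--Macaulay assertion, since $d \geq 1$ we have $d+1 < 2d+1 \leq \dim(R/I_G)$ w.h.p., so $\depth(R/I_G) < \dim(R/I_G)$ and $R/I_G$ fails to be Cohen--Macaulay over any $\kk$. The corollary is essentially immediate given Theorem \ref{PdimTheorem}(1) and classical clique-number concentration; the only subtle point is that the open interval $(1/(d+1), 1/d)$ spans two adjacent clique-number regimes, but since $\lceil (2d+1)/2 \rceil = \lceil (2d+2)/2 \rceil = d+1$ the bound is insensitive to which regime $\alpha$ lands in, so no real obstacle arises.
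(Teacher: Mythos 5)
Your proposal is correct and follows essentially the same route as the paper: it combines the lower bound $\pdim(R/I_G) \geq n-(d+1)$ from Theorem \ref{PdimTheorem}(1) with Auslander--Buchsbaum to get $\depth(R/I_G) \leq d+1$, then uses the clique-number concentration $\omega(G) \in \{2d+1, 2d+2\}$ (i.e.\ Corollary \ref{cor:Krulldim}) to conclude $\lceil \dim(R/I_G)/2 \rceil \geq d+1$. The paper just packages the first step as Corollary \ref{cor:depth} rather than unwinding it inline; the substance is identical.
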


This complements results from \cite{ErmYan} where Cohen--Macaulay properties were considered for the regime $1/n^{2/3} < p < \large(\frac{\log n}{n}\large)^{2/k+3}$.

Our results also lead to an understanding of the extremal Betti numbers of random coedge ideals.  Recall that a nonzero graded Betti number $\beta_{i,i+k}(R/I)$ is said to be \emph{extremal} if $\beta_{i',i'+k'}(R/I) = 0$ for all pairs $(i',k') \neq (i,k)$ with $i' \geq i$ and $k' \geq k$. The extremal Betti numbers detect the ``corners'' in the Betti table of $R/I_G$ and can be seen as a generalization of regularity and projective dimension. In \cite[Theorem 1]{HKM} it is shown that for any integer $r$ and $b$ with $1 \leq b \leq r$ there exists a graph $G$ for which $\reg(R/I_G) = r$ and such that $R/I_G$ has $b$ extremal Betti numbers.  As a corollary of our results we see that for the case of random coedge ideals the extremal Betti numbers are very easy to describe.

\begin{corollary}\label{cor:extremal}
Suppose $p = n^{-\alpha}$ with $1/(d+1) < \alpha < 1/d$, and let $G = G(n,p)$. Then for $\kk = {\mathbb Q}$ with high probability the ring $R/I_G$ has only one extremal Betti number, given by $\beta_{n - (d + 1), n}(R/I_G)$.
\end{corollary}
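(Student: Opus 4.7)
The plan is to combine Theorems \ref{RegularityTheorem} and \ref{PdimTheorem}(2), which together imply that w.h.p.\ over $\kk = \mathbb{Q}$ the Betti table of $R/I_G$ has all nonzero entries (other than $\beta_{0,0}=1$) confined to the rectangle $1 \leq i \leq n-(d+1)$ and $1 \leq k \leq d+1$. In this situation, once we know the far corner $\beta_{n-(d+1), n} = \beta_{n-(d+1),\, (n-(d+1))+(d+1)}$ is nonzero, it is automatically extremal since no nonzero entries lie strictly to its right or below. It is the \emph{unique} extremal entry: any other candidate $\beta_{i, i+k}$ with $(i,k) \neq (n-(d+1), d+1)$ satisfies $i \leq n-(d+1)$ and $k \leq d+1$ with strict inequality in at least one coordinate, so the nonzero entry $\beta_{n-(d+1), n}$ componentwise dominates $(i,k)$ and witnesses the failure of $\beta_{i, i+k}$ to be extremal.

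The whole proof thus reduces to showing that $\beta_{n-(d+1), n}(R/I_G) \neq 0$ w.h.p. By Hochster's formula applied to the full vertex set $W = [n]$, this Betti number equals $\dim_\mathbb{Q} \tilde{H}_d(\Delta(G); \mathbb{Q})$, so it suffices to show the top-dimensional rational homology of the random clique complex is nontrivial w.h.p. I would extract this from ingredients already assembled for the earlier theorems: the characteristic-zero Garland/spectral vanishing underlying Theorem \ref{PdimTheorem}(2), applied to the full complex, yields $\tilde{H}_i(\Delta(G); \mathbb{Q}) = 0$ for all $i < d$, while Malen's collapsibility result used in Theorem \ref{RegularityTheorem} kills integral (and hence rational) homology of $\Delta(G)$ in all dimensions $> d$. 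A standard face-number/Euler-characteristic computation for $\Delta(n, n^{-\alpha})$ in this range of $\alpha$ shows that the reduced Euler characteristic of $\Delta(G)$ is asymptotically nonzero, and since only $\tilde{H}_d$ can contribute, we conclude $\tilde{H}_d(\Delta(G); \mathbb{Q}) \neq 0$ w.h.p.

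The main obstacle is this last non-vanishing step, because the field-independent lower bound $\pdim \geq n-(d+1)$ in Theorem \ref{PdimTheorem}(1) is genuinely weaker: it only guarantees some nonzero $\beta_{n-(d+1), n-(d+1)+k}$ with $k \leq d+1$, and a priori this could be produced by a proper induced subcomplex $\Delta_W$ with $|W| < n$ contributing in a lower row $k < d+1$. Pinning the extremal to the top-right corner at $k = d+1$ requires the characteristic-zero vanishing of $\tilde{H}_i$ for $i < d$ on the \emph{full} complex, which is precisely why the corollary is restricted to $\kk = \mathbb{Q}$.
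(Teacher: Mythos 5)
Your proposal is correct and follows essentially the same approach as the paper: box the Betti table using Theorem~\ref{RegularityTheorem} (no rows below $d+1$) and Theorem~\ref{PdimTheorem}(2) (no columns beyond $n-(d+1)$), then observe that the nonzero corner entry $\beta_{n-(d+1),n}$ is the unique extremal Betti number. The only cosmetic difference is that the paper directly cites the already-established nonvanishing $\beta_{n-(d+1),n}=\dim_{\mathbb{Q}}\tilde H_d(\Delta(G);\mathbb{Q})\neq 0$ coming from Theorem~\ref{KahleRationalHomology}, whereas you sketch a re-derivation of that fact via Euler-characteristic/face-count considerations together with the vanishing above and below degree $d$.
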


\begin{figure}
\centering 
\includegraphics[width=6 in]{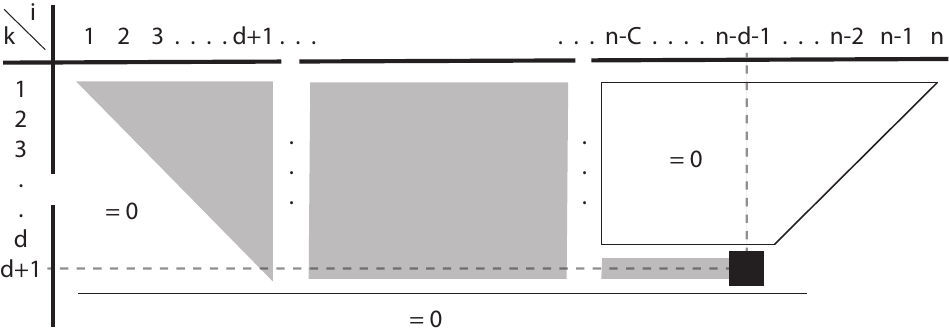}
\caption{The Betti table for $R/I_G$ with rational coefficients, where $G \sim G(n, n^{-\alpha})$ for $1/(d + 1) < \alpha < 1/d$. Theorem \ref{PdimTheorem} establishes the trapezoid of zero entries on the right, and Theorem \ref{RegularityTheorem} establishes zero entries below row $d + 1$. The black square indicates the single extremal Betti number of $R/I_G$, as described in Corollary. \ref{cor:extremal}}\label{ResultsSummary}
\end{figure}





For our last result we ``zoom out'' and consider the distribution of nonzero entries in the Betti table of $R/I_G$ for the case ${\mathbb K} = {\mathbb Q}$. Erman and Yang consider this question for the case of $p=c/n$, for some fixed constant $c$, and in \cite[Theorem 1.4]{ErmYan} they show that if $c \in (0, 1)$  then $\overline{\beta}_{i, i + 1}$ converges to $Kn$ for an explicit constant $K = K(c)$ for $i \approx n/2$.  Hence the distribution of entries in the first row of the Betti table of $R/I_G$ looks like those of a Koszul complex near $n/2$. Furthermore the authors conjecture (\cite[Conjecture 6.4]{ErmYan}) that a similar property should hold for other rows of the Betti table for larger values of $p$.  

Here we take a somewhat different perspective to describe the distribution of nonzero Betti numbers for a wider range of parameters $p$, and for all rows of the Betti table. In light of Hochster's formula (Theorem \ref{thm:Hochster}) it is natural to consider a normalized version of the Betti number defined by:
\[\overline \beta_{i,j}(R/I_G) = \frac{\beta_{i,j}(R/I_G)}{\binom{n}{j}}\]
We consider homology of subcomplexes in the $p = n^{-\alpha}$ regime and see that for rational coefficients we obtain a ``logarithmic diagonal'' distribution among the nonzero normalized Betti numbers. More specifically we have the following result.


\begin{theorem}\label{corstaircase}
Let $d \geq 0$ be any integer, fix $\alpha \in (1/(d + 1), 1/d)$, and let $G \sim G(n, n^{-\alpha})$. Then for $\kk = {\mathbb Q}$ we have for any $k \leq (d + 1)$ and $i = n^\gamma$
\[\overline \beta_{i, i + k}(R/I_G) \rightarrow \infty\]
in probability if $\gamma \in (0, 1) \cap ((k - 1) \alpha, k \alpha)$,
while 
\[\overline \beta_{i, i + k}(R/I_G) \rightarrow 0\]
in probability if $\gamma \in (0, 1) \setminus [(k - 1) \alpha, k \alpha]$.
\end{theorem}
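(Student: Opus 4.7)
The plan is to use Hochster's formula to translate the statement into a question about the $(k{-}1)$-th rational homology of induced subcomplexes of the random clique complex $\Delta(n, n^{-\alpha})$, and then apply sharp threshold results from random clique complex theory: Malen's collapsibility theorem (as invoked in Theorem~\ref{RegularityTheorem}), a Garland-type spectral vanishing for rational cohomology of random flag complexes (building on the machinery of Section~\ref{sec:spectral}), and an elementary Euler--Morse lower bound on Betti numbers. Concretely, Hochster's formula yields
\[
\overline\beta_{i, i+k}(R/I_G) = \frac{1}{\binom{n}{i+k}} \sum_{\substack{W \subseteq [n] \\ |W|=i+k}} \dim_{\mathbb Q} \tilde H_{k-1}(\Delta(G[W]); \mathbb Q) = \mathbb E_W\bigl[\dim_{\mathbb Q} \tilde H_{k-1}(\Delta(G[W]); \mathbb Q)\bigr],
\]
where $W$ is uniform on $(i{+}k)$-subsets of $[n]$. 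Setting $m = i + k \approx n^\gamma$ and viewing $W$ as independent of $G$, the induced graph $G[W]$ is distributed as $G(m, n^{-\alpha})$, so by exchangeability $\mathbb E_G[\overline\beta_{i,i+k}] = \mathbb E\bigl[\dim_{\mathbb Q} \tilde H_{k-1}(\Delta(m, n^{-\alpha}); \mathbb Q)\bigr]$, and the theorem reduces to controlling this expected rational Betti number as a function of $\gamma$.

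For the vanishing direction $\gamma \in (0,1) \setminus [(k-1)\alpha, k\alpha]$, I would handle two subcases. When $\gamma < (k-1)\alpha$ the edge probability satisfies $n^{-\alpha} \leq m^{-1/(k-1)}$, and Malen's theorem then implies $\Delta(m, n^{-\alpha})$ is $(k{-}2)$-collapsible a.a.s., forcing $\tilde H_{k-1} = 0$. When $\gamma > k\alpha$ we have $n^{-\alpha} \geq m^{-1/k}$, placing us above the sharp Garland-type spectral-vanishing threshold, which gives $\tilde H_{k-1}(\Delta(m, n^{-\alpha}); \mathbb Q) = 0$ a.a.s. In both subcases one needs to upgrade from a.a.s.\ vanishing to $\mathbb E[\dim \tilde H_{k-1}] \to 0$, which can be done either by combining a quantitative probability bound with the trivial estimate $\dim \tilde H_{k-1} \leq \binom{m}{k}$, or by directly bounding the expected number of witness configurations for non-vanishing homology (for instance the induced cross-polytopes used by Erman and Yang). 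Markov's inequality then yields $\overline\beta_{i, i+k}(R/I_G) \to 0$ in probability.

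For the divergence direction $\gamma \in ((k-1)\alpha, k\alpha)$, I would use the pointwise inequality $\dim_{\mathbb Q} \tilde H_{k-1}(\Delta) \geq f_{k-1}(\Delta) - f_{k-2}(\Delta) - f_k(\Delta)$, valid for any simplicial complex (with $f_{-1} := 1$ in the reduced convention). Averaging over random $W$ gives
\[
\overline\beta_{i, i+k}(R/I_G) \geq \mathbb E_W\bigl[f_{k-1}(\Delta(G[W])) - f_{k-2}(\Delta(G[W])) - f_k(\Delta(G[W]))\bigr],
\]
and since $\mathbb E_W[f_{j-1}(\Delta(G[W]))]$ is $\binom{m}{j}/\binom{n}{j}$ times the number of $j$-cliques of $G$, standard subgraph-count concentration in $G(n, n^{-\alpha})$ gives that this equals $(1+o(1))\binom{m}{j}(n^{-\alpha})^{\binom{j}{2}} = \Theta(n^{j\gamma - \alpha\binom{j}{2}})$ with high probability over $G$. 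In the regime $\gamma \in ((k-1)\alpha, k\alpha)$ the $k$-clique count dominates: the ratios $f_{k-1}/f_{k-2} \sim n^{\gamma - (k-1)\alpha}$ and $f_k/f_{k-1} \sim n^{\gamma - k\alpha}$ diverge and tend to zero respectively, so the displayed average is $(1-o(1))\, n^{k\gamma - \alpha\binom{k}{2}} \to \infty$ w.h.p., yielding $\overline\beta_{i,i+k}(R/I_G) \to \infty$ in probability.

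The hardest step will be the upper-threshold vanishing at $\gamma > k\alpha$, which requires Garland's spectral method for rational cohomology of random flag complexes and genuinely uses the $\mathbb Q$-coefficient hypothesis; matching the precise threshold $\gamma = k\alpha$ rather than a log-factor-off version is the technical heart. A secondary subtlety is upgrading an a.a.s.\ vanishing from Malen or Garland to a bound on the \emph{expectation} $\mathbb E[\dim \tilde H_{k-1}]$ sharp enough for Markov's inequality, which may require a quantitative refinement of the cited theorems or a direct first-moment bound on the number of minimal homology generators in the sparse regime.
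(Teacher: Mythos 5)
Your proposal is correct and follows essentially the paper's strategy. For the vanishing direction you correctly reduce via Hochster and exchangeability to bounding $\mathbb{E}[\dim\tilde H_{k-1}(\Delta(m,n^{-\alpha});\mathbb Q)]$, and you correctly flag that the key technical input is upgrading the a.a.s.\ vanishing statements (Malen's collapsibility on the low side, Garland's spectral method on the high side) to \emph{expectation} bounds sharp enough for Markov. The paper does exactly this, having prepared Lemma~\ref{lemma2ndmomentbelow} (a first-moment count of pure strongly connected subcomplexes of high minimum degree, giving $\mathbb{E}[\beta_{d+1}]=o(1)$) and Corollary~\ref{lemma2ndmomentabove} (quantitative $1-o(n^{-M})$ vanishing from the spectral-gap arguments, which then controls the expectation via the trivial bound $\dim\tilde H_{k-1}\leq\binom{m}{k}$) for precisely this purpose.

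On the divergence direction your route is a modest but genuine simplification of the paper's. Both rely on the Euler--Morse inequality $\dim_{\mathbb Q}\tilde H_{k-1}\geq f_{k-1}-f_{k-2}-f_k$ together with second-moment concentration of clique counts in $G(n,n^{-\alpha})$ for the strictly balanced graphs $K_{k-1},K_k,K_{k+1}$. But the paper applies the Euler bound to a single random subcomplex $\Delta(S)$, concludes $\beta_{k-1}(\Delta(S))=\omega(1)$ w.h.p., and then must pass from that to divergence of $\overline\beta_{i,i+k}$ via a separate contradiction argument (because the subcomplexes over different $S$ are highly correlated and one cannot simply union-bound). You instead observe that $\mathbb E_W[f_{j-1}(\Delta(G[W]))]$ is, for each fixed $G$, a deterministic multiple of the $j$-clique count $N_j(G)$, so averaging the pointwise Euler bound converts $\overline\beta_{i,i+k}(R/I_G)$ directly into a linear combination of the $N_j(G)$, and concentration over $G$ alone finishes it. This bypasses the correlation issue entirely and is a cleaner way to land the same estimate. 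One small thing to make explicit if you write this up: the expected clique counts $\mathbb E[N_j]$ for $j\leq k+1$ do indeed tend to infinity in the stated regime (this follows from $k\leq d+1$ and $\alpha<1/d$, which give $\alpha<2/j$ for the relevant $j$), which is what licenses the second-moment concentration you invoke.
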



Table \ref{normalizedSample} depicts the normalized Betti table for the example from Table \ref{sampleBettiTable}, and we refer to Figure \ref{summary} for an illustration of the asymptotic results from Theorem \ref{corstaircase}. 


\begin{table}[h]
\begin{tiny}
\begin{tabular}{c|cccccccccccccccccccc}
- & 0 & 1 & 2 & 3 & 4 & 5 & 6 & 7 & 8 & 9 & 10 & 11 & 12 & 13 & 14 & 15 & 16 & 17 & 18 & 19  \\ \hline
0 & 1 & - & - & -& - & - & -& - & - & -& - & - & -& - & - & - & - & - & - & - \\
1 & - & 0.56 & 0.76 & 0.72 & 0.57 &  0.40 & 0.26 & 0.17 & 0.11 & 0.07 & 0.04 & 0.02 & 0.01 & 0.01 & 0.00 & 0.00 & 0 & 0 & 0 & 0 \\
2 & -  & - & 0.04 & 0.16 & 0.40 & 0.75 & 1.17 & 1.61 & 2.01 & 2.32 & 2.53 & 2.61 & 2.56 & 2.36 & 2.04 & 1.61 & 1.11 & 0.55 & 0 & - \\
3 & -  & - & - & 0.00 & 0.00 & 0.00 &  0.01 & 0.01 & 0.03 & 0.06 & 0.12 & 0.22 & 0.37 & 0.60 & 0.94 & 1.43 & 2.1 & 3 & - & - \\
\end{tabular}
\end{tiny}
\caption{The normalized Betti table for $R/I_G$ from Example \ref{ex:running}, obtained by dividing the Betti number $\beta_{i,i+k}$ by $\binom{20}{i+k}$.}\label{normalizedSample}
\end{table}





Finally we mention some previous work that is related to ours.  In \cite{DPSSW} De Loera, Petrovı\'c, Silverstein, Stasi, and Wilburne study a notion of random monomial ideals of arbitrary uniform degree, and establish several results and conjectures regarding their algebraic invariants.  Their construction is similar to that of Erd\H{o}s--R\'enyi random graphs, and under certain parameters can recover our model.  In a recent paper of Banerjee and Yogeshwaran \cite{BanYog} it is shown that the threshold for a coedge ideal to have a linear resolution coincides with the threshold for the ideal to have a linear presentation.  They also establish precise threshold results regarding regularity for the case that $\reg(I_G) = 2$.  In \cite{DHKS} De Loera, Ho\c{s}ten, Krone, and Silverstein study resolutions of random equigenerated monomial ideals under a model where the number of variables $n$ is fixed and the degree $d$ of the randomly chosen generators goes to infinity.  In this model they prove that the projective dimension of such ideals is as large as possible (equal to $n$). In \cite{ConJuhWel} Conca, Juhnke-Kubitzke, and Welker study a model for asymptotic Betti numbers via the Stanley--Reisner ring of successive barycentric subdivisions of a simplicial complex. More recently Booms, Erman, and Yang \cite{BooErmYan} study how the underlying field $\kk$ plays a role in the Betti tables of random ideals.  They prove that with high probability the Betti numbers of $R/I_G$ depend on the characteristic of $\kk$.  Finally, in a very recent paper \cite{EngOrl} Engstr\"om and Orlich apply the notion of `critical graphs' from random graph theory to study `parabolic' Betti numbers of random edge ideals.

The rest of the paper is as organized follows.  In Section \ref{sec:background} we review some background material from combinatorial commutative algebra, random graphs, and random cliques complexes.  Here we also provide combinatorial descriptions of the relevant algebraic invariants that will be useful for our later study, including the notion of cohomological vertex connectivity $\kappa_\kk^i$ discussed above. In Section \ref{sec:regularity} we focus on the regularity of $R/I_G$ for random coedge ideals over any field $\kk$, and provide the proof of Theorem \ref{RegularityTheorem}.  Here we also establish Lemma \ref{lemma2ndmomentbelow}, which provides an upper bound on the expectation of higher Betti numbers of relevant clique complexes. In Section \ref{sec:proj} we consider the projective dimension of $R/I_G$. We first consider the case of $\kk = {\mathbb Q}$ where we use methods from spectral graph theory to provide a proof of Theorem \ref{PdimTheorem} part (1).  We next turn to the proof of part (2), where different arguments are needed.  We end Section \ref{sec:proj} with an application to the depth and extremal Betti numbers of $R/I_G$ and prove Corollaries \ref{cor:depthCM} and \ref{cor:extremal}.  In Section \ref{sec:log} we consider the distribution of nonzero entries in each row of the Betti table of $R/I_G$ and prove Theorem \ref{corstaircase}.  Finally in Section \ref{sec:further} we discuss some open questions and ideas for future research.

{\bf Acknowledgments.} We thank the anonymous referees for helpful comments and corrections.

\section{Definitions and preliminaries} \label{sec:background}

In this section we recall some necessary definitions and set some notation that will used throughout the paper.

\subsection{Commutative algebra and Hochster's formula}
Let $G = (V,E)$ be a simple undirected graph on vertex set $V = [n] = \{1,\dots,n\}$ and edge set $E = E(G)$, and as above let $R = \kk[x_1, \dots, x_n]$ be the polynomial ring on $n$ variables over some fixed field $\kk$. The \defi{coedge ideal} $I_G$ of $G$ is defined to be the ideal generated by all monomials corresponding to non-edges of $G$:
\[ I_G = \langle x_ix_j : ij \notin E(G) \rangle. \]

We note that the collection of coedge ideals correspond to the set of squarefree quadratic monomial ideals in $R$, but it will be useful to have the graph theoretic perspective.  Furthermore, a coedge ideal is a special case of the more general \emph{Stanley--Reisner ideal} of a simplicial complex (see \cite{Sta}, where much of the following material is also discussed).  In particular $I_G$ is the Stanley--Reisner ideal of $\Delta(G)$, the \defi{clique complex} of the graph $G$.  Recall that $\Delta(G)$ is the simplicial complex on vertex set $V$ with simplices given by all complete subgraphs of $G$.  The Stanley--Reisner perspective will be useful when we talk about homological properties of $R/I_G$.  

A \defi{free resolution} of $R/I_G$ is a long exact sequence of $R$-modules \[  0 \ \leftarrow \ R/I_G \ \leftarrow \ R \ \xleftarrow{\phi_1} \ F_1 \ \xleftarrow{\phi_2} 
    \ \cdots \ \xleftarrow{\phi_r} \ F_r \ \leftarrow \ 0,
\]
where each $F_i$ is a graded free $R$-module
\[
    F_i \ \cong \ \bigoplus_{\sigma \in \N^n} R(-\sigma)^{\beta_{i,\sigma}}
\]
and where each $\phi_i$ is a graded homomorphism of $R$-modules.

The resolution is called \defi{minimal} if each of the $\beta_{i,\sigma}$ is
minimum among all graded free resolutions of $R/I_G$. In this case we have
$\beta_{i,\sigma} = \beta_{i, \sigma}(R/I_G) = \Tor_R(R/I_G, \kk)_\sigma$, and these integers are called the \defi{finely} or
\defi{$\N^n$-graded Betti numbers} of the $R$-module $R/I_G$. 

The main objects we study here are the \defi{$\Z$-graded Betti numbers} of $R/I_G$, given by
\[
    \beta_{i,j}(R/I_G) \ = \ \sum_{|\sigma| = j} \beta_{i,\sigma}(R/I_G), \]
\noindent
where $|\sigma| = \sigma_1 + \sigma_2 + \cdots + \sigma_n$.

Note that $I_G$ is minimally generated by $\binom{n}{2} - |E(G)|$ monomials, and hence
\[\beta_{1,2}(R/I_G) = \binom{n}{2} - |E(G)|. \]

It is often useful to collect the Betti numbers of $R/I_G$ in its \defi{Betti table}, where the entry in row $k$ and column $i$ has the value $\beta_{i,i+k}(R/I_G)$.  Note that by construction $\beta_{i,j}(R/I_G) = 0$ if $j<i$. We will be interested in the ``shape'' of these Betti tables for $R/I_G$ where $G$ is an Erd\H{o}s--R\'{e}nyi random graph. The following invariants will be useful in our study.

The \defi{projective dimension} of $R/I_G$ is given by
\[\pdim(R/I_G) = \max\{i: \beta_{i,i+k}(R/I_G) \neq 0 \text{ for some $k$}\},\]
whereas the \defi{(Castelnuovo-Mumford) regularity} of $R/I_G$ is given by
\[\reg(R/I_G) = \max\{k: \beta_{i,i+k}(R/I_G) \neq 0 \text{ for some $i$}\}.\]


 Following \cite{BCP} a nonzero graded Betti number $\beta_{i,i+k}(R/I_G)$ is said to be \defi{extremal} if $\beta_{i',i'+k'}(R/I_G) = 0$ for all pairs $(i',k') \neq (i,k)$ with $i' \geq i$ and $k' \geq k$. Hence extremal Betti numbers detect the ``corners'' in the Betti table of $R/I_G$ and can be seen as a refinement of the notion of regularity  and projective dimension.


Our main tool for studying Betti numbers will be a well known formula due to Hochster (see \cite[Chapter II, Theorem 4.1]{Sta}, where an equivalent formulation in terms of \emph{links} is given).  In what follows recall that $\Delta(G)$ denotes the clique complex of $G$.

\begin{theorem}[Hochster's formula]\label{thm:Hochster}
For a graph $G$, the Betti numbers of $R/I_G$ are given by
\[\beta_{i,j}(R/I_G) = \sum_{S \in {[n] \choose j}} \dim_{\kk} \big(\tilde H_{j-i-1} (\Delta(S); \kk ) \big), \]
where $\Delta(S)$ denotes the clique complex on the graph induced by the subset $S$.
\end{theorem}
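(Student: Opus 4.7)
The plan is to use Hochster's formula to reduce $\overline\beta_{i,i+k}(R/I_G)$ to the expected dimension of the rational $(k-1)$-st homology of a random clique complex on $m=i+k$ vertices, and then appeal to the Kahle thresholds together with Markov/Chebyshev arguments.

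By Hochster's formula (Theorem \ref{thm:Hochster}) and linearity of expectation, writing $m = i + k$,
\[\mathbb{E}\bigl[\overline\beta_{i,i+k}(R/I_G)\bigr]
 = \mathbb{E}\bigl[\dim_{\mathbb Q}\tilde H_{k-1}(\Delta(m, n^{-\alpha});\mathbb{Q})\bigr],\]
since for each fixed $m$-subset $S$ the induced clique complex $\Delta(S)$ is distributed as $\Delta(m, n^{-\alpha})$. With $i = n^{\gamma}$ we have $m \sim n^{\gamma}$, and rewriting $n^{-\alpha} = m^{-\alpha/\gamma\,(1+o(1))}$, the subcomplex is effectively $\Delta(m, m^{-\alpha'})$ with $\alpha' = \alpha/\gamma$. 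By the Kahle thresholds for random clique complexes, $\tilde H_{k-1}(\Delta(m, m^{-\alpha'});\mathbb{Q}) \neq 0$ w.h.p.\ iff $\alpha' \in (1/k, 1/(k-1))$, which is exactly $\gamma \in ((k-1)\alpha, k\alpha)$.

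For $\gamma \in (0,1)\setminus[(k-1)\alpha, k\alpha]$ I would split into the dense and sparse subcases. If $\gamma > k\alpha$, the subcomplex is $(k-1)$-connected over $\mathbb{Q}$ w.h.p.\ by Garland's method (the same spectral tool used for Theorem \ref{PdimTheorem}(2)); a sharp spectral bound forces the expected dimension of $\tilde H_{k-1}$ to tend to zero. If $\gamma < (k-1)\alpha$, I would invoke the collapsibility ideas of Malen (as in the proof of Theorem \ref{RegularityTheorem}) together with the upper bound on the expectation of higher Betti numbers provided by Lemma \ref{lemma2ndmomentbelow}, yielding $\mathbb{E}\bigl[\overline\beta_{i,i+k}(R/I_G)\bigr] \to 0$. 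Markov's inequality then upgrades this to convergence to $0$ in probability.

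For $\gamma \in ((k-1)\alpha, k\alpha)$ I would first show $\mathbb{E}\bigl[\overline\beta_{i,i+k}(R/I_G)\bigr]\to\infty$ using Kahle's asymptotic formula for $\mathbb{E}[\beta_{k-1}(\Delta(m, m^{-\alpha'}))]$, which grows like a positive power of $m$ in the stated regime (the reduced Euler characteristic of $\Delta(m, m^{-\alpha'})$ is dominated by its dimension-$(k-1)$ face count). To upgrade to convergence in probability I would carry out a second-moment estimate: writing $\beta_{i,i+k}(R/I_G)=\sum_{S} X_S$ with $X_S = \dim_{\mathbb{Q}} \tilde H_{k-1}(\Delta(S);\mathbb{Q})$, the variables $X_S$ and $X_T$ are independent when $S$ and $T$ are disjoint, and I would bound the covariance contributions from overlapping pairs by stratifying according to $|S\cap T|$, showing the variance is of strictly smaller order than the square of the mean. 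Chebyshev's inequality then yields $\overline\beta_{i,i+k}(R/I_G)\to\infty$ in probability.

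The principal obstacle will be the concentration step in the nonvanishing regime: controlling $\mathrm{Cov}(X_S,X_T)$ across heavily overlapping subsets $S,T$ requires delicate estimates on the homology of their intersected subcomplexes, and the rational-coefficient hypothesis is essential here so that spectral/Hodge-theoretic tools via Garland's method can be used. A secondary difficulty is sharpening the ``vanishes w.h.p.'' conclusion to ``vanishes in expectation'' in the sparse regime, since a priori $\dim\tilde H_{k-1}$ could be as large as the number of $(k-1)$-simplices; this is precisely where Lemma \ref{lemma2ndmomentbelow} must do the work.
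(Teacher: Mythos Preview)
Your proposal does not address the stated theorem at all. Theorem~\ref{thm:Hochster} is Hochster's formula itself --- the classical identity expressing graded Betti numbers of a Stanley--Reisner ring as sums of reduced homology dimensions of induced subcomplexes. The paper does not prove this; it is quoted as a standard fact with a reference to \cite{Stan}. What you have written is instead a proof sketch for Theorem~\ref{corstaircase}, the logarithmic-diagonal distribution result for normalized Betti numbers. These are entirely different statements: Hochster's formula is a deterministic algebraic identity valid for any graph, proved via a Taylor/Koszul complex or local cohomology argument, and has nothing to do with random graphs, Kahle thresholds, or Markov's inequality.

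If your intent was actually Theorem~\ref{corstaircase}, then your outline for the vanishing regime matches the paper's proof closely (Hochster plus linearity of expectation, Corollary~\ref{lemma2ndmomentabove} and Lemma~\ref{lemma2ndmomentbelow} for the two subcases, then Markov). For the nonvanishing regime, however, you propose a second-moment/Chebyshev argument controlling covariances $\mathrm{Cov}(X_S,X_T)$ stratified by overlap --- and you correctly flag this as the principal obstacle. The paper sidesteps this entirely: rather than proving concentration, it argues by contradiction. If $\overline\beta_{i,i+k}(R/I_G) \leq L$ held with probability at least $\epsilon$, then sampling $G$ and a uniformly random $m$-subset $S$ would (via Markov applied to the Hochster sum) give $\beta_{k-1}(\Delta(S)) \leq 2L/\epsilon$ with probability at least $\epsilon/2$; but $\Delta(S) \sim \Delta(m, m^{-\alpha/\gamma})$, and Kahle's estimate gives $\beta_{k-1}(\Delta(S)) = \omega(1)$ with high probability, a contradiction. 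This avoids any covariance computation and is substantially simpler than the route you propose.
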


Hochster's formula provides a justification for the standard way of writing the Betti table, where the entry in row $k$ and column $i$ is given by $\beta_{i, i + k}(R/I_G)$.  In particular the entry in row $k$ and column $i$ gives the sum of the $(k - 1)$st topological Betti numbers over all induced subcomplexes on $i + k$ vertices. 
The first row of the Betti table then summarizes the zeroth homology group of subcomplexes, the second row summarizes the first homology group of subcomplexes, and so on. In Table \ref{sampleBettiTable}, we give an example of such a Betti table for $R/I_G$ and $G$ an instance of $G(n, p)$.

\subsection{Combinatorial descriptions of the invariants}\label{sec:combinatorialdescriptions}

In the context of coedge ideals $I_G$, many algebraic invariants of $R/I_G$ can be described in terms of combinatorial properties of the underlying graph $G$.   For instance recall that the \defi{Krull dimension} of a commutative ring is the supremum length of chains of prime ideals. For the case of coedge ideals $I_G$, it is known (see for instance \cite[Chapter II, Theorem 1.3]{Sta}) that the Krull dimension $\dim(R/I_G)$ is given by the clique number of $G$, i.e. one more than the dimension of $\Delta(G)$. In a similar spirit, Hochster's formula provides a way to compute the Betti numbers of $R/I_G$ via the topology of the clique complex of the underlying graph $G$.  To establish our results it will be useful to translate relevant algebraic properties of $R/I_G$ in these terms. 

 In particular we can define the regularity of $R/I_G$ in terms of homology of induced subcomplexes of $\Delta(G)$ as follows. By Hochster's formula we have
\[\reg(R/I_G) = \max \{k : H_{k - 1}(\Delta(S); \kk) \neq 0 \text{ for some $S \subseteq [n]$}\}.\]
In the example in Table \ref{sampleBettiTable} we see that $\beta_{17,20}(R/I_G) \neq 0$, and since $n=20$ this implies that the entire complex $\Delta(G)$ has nonzero homology in degree 2.  From the table we also see no nonzero entries in rows larger than 3, which implies that $\reg(R/I_G) = 3$ and in particular no induced subcomplex of $\Delta(G)$ has homology in degree larger than 2. We show in Theorem \ref{RegularityTheorem} that for random graphs $G \sim G(n, p)$ with $p$ in a certain regime this is the typical behavior; with high probability the largest nonvanishing homology group of the entire clique complex $\Delta(G)$ gives the regularity of $R/I_G$. 
This is of course not always the case even for coedge ideals.  For example if we let $W_n$ denote the \emph{wheel graph} (the cone over a $n$-cycle $C_n$) then for $n \geq 4$ we have that $\Delta(W_n)$ is contractible, whereas $C_n$ is an induced subcomplex which has homology in degree 1.  Hence the regularity of $R/I_{W_n}$ over any field $\kk$ is 2.

The projective dimension of $R/I_G$ can likewise be interpreted from the topology of subcomplexes of $\Delta(G)$. For this we first recall a standard notion from graph theory. A graph $G$ on more than $\ell$ vertices is said to be \defi{$\ell$-connected} if for every set $S \subset G$ of size less than $\ell$, the graph $G \setminus S$ is connected. The \defi{vertex connectivity} of $G$ is the maximum $\ell$ such that $G$ is $\ell$-connected.

From the definition of the projective dimension and Hochster's formula we have that if the vertex connectivity of $G$ is $\ell$ then the projective dimension of $R/I_G$ over any field $\kk$ is at least $n - \ell - 1$. Indeed in this case $G$ has a set of $\ell$ vertices whose deletion disconnects the graph, and hence we have obtain a set $S \subseteq G$ with $|S| = n - \ell$ where $\tilde H_0(\Delta(S);{\mathbb Z}) \neq 0$ (i.e. $\Delta(S)$ is disconnected). By Hochster's formula we get $\beta_{n - \ell - 1, n - \ell}(R/I_G) \geq 1$, so for $k = 1$ and $i = n - \ell - 1$, we have $\beta_{i, i + k}(R/I_G) \neq 0$, providing the desired lower bound on projective dimension.

Recall that the projective dimension is defined as the index of the last nonzero column of the Betti table. Motivated by the notion of vertex connectivity of a graph we can define projective dimension in terms of a topological and combinatorial invariant of $\Delta(G)$. This approach is taken in work of Babson and Welker \cite{BabWel}, and borrowing their notation we let $\kappa^i_{\kk}(G)$ denote the \defi{$i$-cohomological vertex connectivity} of $G$ over the field $\kk$, where 
\[\kappa^i_{\kk}(G) := \min \{|T| : T \subseteq V(G), \tilde{H}^{i}(\Delta(V(G) \setminus T); \kk)) \neq 0 \}. \]

We set $\kappa^i_{\kk}(G) = \infty$ if no such $T$ exists. Note that for any field $\kk$ we have that $\kappa^0_{\kk}(G)$ recovers the usual vertex connectivity of $G$. For a graph $G$ on $n$ vertices the projective dimension of $R/I_G$ over $\kk$ can then be expressed in terms of these connectivity parameters as follows.
\begin{lemma}\label{combinatorialpdim}
For any graph $G$ and any field $\kk$, we have 
\[\pdim(R/I_G) = \max_{i} \{n - i - \kappa_{\kk}^{i - 1}(G)\}.\]
\end{lemma}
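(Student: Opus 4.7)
The plan is to translate Hochster's formula (Theorem \ref{thm:Hochster}) directly into the language of cohomological vertex connectivity. Since $\pdim(R/I_G)$ is by definition the largest index $i$ for which some $\beta_{i,i+k}(R/I_G)$ is nonzero, I would first fix a row $k$, compute the largest column index in that row containing a nonzero Betti number, and then take the maximum over $k$.

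For fixed $k \geq 0$, Hochster's formula asserts that $\beta_{i,i+k}(R/I_G) \neq 0$ precisely when there exists $S \subseteq [n]$ with $|S| = i+k$ such that $\tilde H_{k-1}(\Delta(S); \kk) \neq 0$. Passing to the complementary set $T = [n] \setminus S$, this is equivalent to the existence of $T \subseteq V(G)$ with $|T| = n - i - k$ and $\tilde H_{k-1}(\Delta(V(G) \setminus T); \kk) \neq 0$. Maximizing $i$ therefore amounts to minimizing $|T|$ subject to this homological non-vanishing condition. Since $\kk$ is a field, the universal coefficient theorem gives $\dim_\kk \tilde H_{k-1}(\Delta; \kk) = \dim_\kk \tilde H^{k-1}(\Delta; \kk)$ for any simplicial complex $\Delta$, so this minimum value of $|T|$ is precisely $\kappa_\kk^{k-1}(G)$ by definition. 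Thus the largest column of the Betti table with a nonzero entry in row $k$ is $n - k - \kappa_\kk^{k-1}(G)$, with the convention that a value of $\infty$ for $\kappa_\kk^{k-1}(G)$ corresponds to an identically zero row and contributes $-\infty$ to the maximum.

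Taking the maximum of $n - k - \kappa_\kk^{k-1}(G)$ over all $k \geq 0$ and relabeling $k$ as $i$ then yields the stated formula. There is no serious obstacle: the argument is essentially a bookkeeping exercise combining Hochster's formula, the substitution $T = V(G) \setminus S$, and the identification of homology with cohomology over a field. The only items requiring care are the boundary cases. When $k = 0$, reduced $(-1)$st (co)homology is nonzero only for the empty complex, forcing $V(G) \setminus T = \emptyset$, so $\kappa_\kk^{-1}(G) = n$ and the corresponding contribution is $0$, consistent with the fact that $\beta_{0,0}(R/I_G) = 1$ is always nonzero; and the values of $i$ for which $\kappa_\kk^{i-1}(G) = \infty$ (for example, when $i-1$ exceeds the dimension of $\Delta(G)$ on any induced subcomplex) simply do not contribute to the maximum.
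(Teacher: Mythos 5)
Your argument is correct and follows essentially the same route as the paper's: apply Hochster's formula, pass to the complement $T = [n]\setminus S$, and use that homology and cohomology agree dimensionwise over a field to identify the minimal $|T|$ with $\kappa_{\kk}^{k-1}(G)$. Your row-by-row organization (maximize $i$ for each fixed $k$, then maximize over $k$) is cleaner and more transparent than the paper's two-sided inequality argument, and you are more explicit than the paper about the $k=0$ and $\kappa = \infty$ boundary conventions, but the underlying idea is the same.
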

\begin{proof} From the definition of projective dimension, we have that $\pdim(R/I_G)$ is the maximum index $i$ so that there exists $k$ with $\beta_{i, i+k}(R/I_G) \neq 0$. Therefore, if $\pdim(R/I_G) = m$ then there exists a maximal set of vertices $S$ of size $s$ for some $s > 0$ so that $\tilde{H}_{s - m - 1}(\Delta(S); \kk) \neq 0$. Thus we have found a set $T = [n] \backslash S$ of size $n-s$ where $\tilde H_{s-m-1}(\Delta(V(T) \setminus T); \kk) \neq 0$ so that $\kappa_{\kk}^{s - m - 1} \leq n - s$. Hence for $i = s - m$ we have $n - i - \kappa_{\kk}^{i - 1} \geq n - (s - m) - (n - s) = m = \pdim(R/I_G)$.

Conversely, if $\max_i \{n - i - \kappa_{\kk}^{i - 1}(G)\} = m$, then there exists $s$ so that $\kappa^{s - 1}_{\kk}(G) = n - s - m$.  Therefore $G$ admits a collection $T$ of $n - s - m$ vertices where $\tilde{H}_{s - 1}(\Delta(V(G) \setminus T); \kk) \neq 0$, and we conclude that $\beta_{m, m+s}(R/I_G) \neq 0$.  By definition the implies that $\pdim(R/I_G) \geq m = \max_i \{n -  i - \kappa_{\kk}^{i - 1}(G)\}$.
\end{proof}

In the example in Table \ref{sampleBettiTable} we have $\kappa_{\Q}^0(G) = 4$, $\kappa_{\Q}^{1}(G) = 1$, and $\kappa_{\Q}^{2}(G) = 0$. From this it follows that $\pdim(R/I_G) = 17$.


\subsection{Random graphs and random clique complexes}\label{sec:randomgraphs}
 The study of Erd\H{o}s--R\'enyi random graphs is a well-established research area with many classical results. Here we collect some results on random graphs that will be useful for our study, and refer to \cite{FriKar} for an extensive overview. In light of Hochster's formula we are especially interested in thresholds for notions of connectivity and induced subgraphs.

As we describe asymptotic behavior of our objects we often make use of Bachmann--Landau notation. Specifically given two nonnegative functions $f(n)$ and $g(n)$ we use the following notation:
\begin{itemize}
    \item $f(n) = O(g(n))$ provided there exists $C$ constant so that $f(n) \leq C g(n)$ when $n$ is sufficiently large.
    \item $f(n) = \Omega(g(n))$ provided that there exists $c$ constant so that $f(n) \geq cg(n)$ when $n$ is sufficiently large.
    \item $f(n) = \Theta(g(n))$ provided that $f(n) = O(g(n))$ and $f(n) = \Omega(g(n))$ both hold.
    \item $f(n) = o(g(n))$ provided that $\lim_{n \rightarrow \infty} f(n)/g(n) = 0$.
    \item $f(n) = \omega(g(n))$ provided that $\lim_{n \rightarrow \infty} f(n)/g(n) = \infty$.
\end{itemize}

A seminal result from \cite{ErdRen} addresses the connectivity threshold of the random graph $G(n,p)$. For $\epsilon > 0$, if $p(n) < \frac{(1-\epsilon) \log n}{n}$ then a graph $G \sim G(n,p)$ will almost surely contain an isolated vertex (and hence will be disconnected), whereas if $p(n) > \frac{(1+\epsilon) \log n}{n}$ then a graph $G \sim G(n,p)$ will almost surely be connected. 

In a similar vein, vertex $\ell$-connectivity of a random Erd\H{o}s--R\'enyi graph is also well understood. As we saw in the previous section this notion is relevant for our study of projective dimension of random coedge ideals.  In \cite{ErdRen64} it is shown that the threshold for $\ell$-connectivity of $G(n,p)$ is given by 
\[p = \frac{\log n + (\ell-1) \log \log n + c_n}{n}.\]
\noindent
It is well-known that this coincides with the threshold for $G(n,p)$ to have minimum degree $\ell$.  It is clear that if $G$ has minimum degree $\ell$ then one can remove $\ell$ vertices to disconnect the graph; the surprising result is the other direction. 

As we mentioned above, Babson and Welker have studied a higher-dimensional notion of vertex connectivity for clique complexes.   In \cite{BabWel} they mostly study the case that $\kk = {\mathbb F}_2$ and they establish sufficient conditions on $p$ for a random clique complex to have  $i$-cohomological vertex connectivity $\kappa_{\mathbb{F}_2}^i$ equal to the minimum degree of an $i$-dimensional face.  We say more about this in Section \ref{sec:proj}.

For our study it will also be useful to understand thresholds for the appearance of certain induced subgraphs of $G \sim G(n, p)$.  For example it is well known that the \emph{clique number} $\omega(G)$ (the size of the largest complete subgraph) of $G \sim G(n, n^{-\alpha})$ is asymptotically almost surely a constant depending on $\alpha$.  This follows from a more general result of Bollob\'as \cite{Bol} that we will use again later. For a graph $H$ the \defi{essential density} of $H$ is defined to be 
\[\nu(H)  = \max \{e(H')/v(H') \mid H' \subseteq H \}\]
Bollob\'as' result, in part, is the following. The statement as it appears here is essentially \cite[Theorem 5.3]{FriKar}.
\begin{proposition}[\cite{Bol}]\label{prop:density}
Suppose $H$ is a fixed graph.  If $\alpha < 1/\nu(H)$ then with high probability $G \sim G(n, n^{-\alpha})$ contains $H$ as a subgraph, whereas if  $\alpha > 1/\nu(H)$ then with high probability $G \sim G(n, n^{-\alpha})$ does not contain $H$ as a subgraph.
\end{proposition}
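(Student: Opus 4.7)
The plan is to use the first moment method for the non-appearance direction and the second moment method for the appearance direction, which is the standard approach for subgraph thresholds in Erd\H{o}s--R\'enyi random graphs.

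For the direction $\alpha > 1/\nu(H)$, I would first let $H^\star \subseteq H$ be a subgraph achieving the maximum $\nu(H) = e(H^\star)/v(H^\star)$. Writing $X^\star$ for the number of labeled copies of $H^\star$ in $G \sim G(n, n^{-\alpha})$, there are $O(n^{v(H^\star)})$ ways to place the vertices and each copy appears with probability $p^{e(H^\star)}$, so
\[
\E[X^\star] = O\!\left(n^{v(H^\star)} p^{e(H^\star)}\right) = O\!\left(n^{v(H^\star) - \alpha e(H^\star)}\right).
\]
The assumption $\alpha > v(H^\star)/e(H^\star)$ forces the exponent to be negative, so $\E[X^\star] \to 0$, and Markov's inequality gives $\mathbb{P}(X^\star \geq 1) \to 0$. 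Since any copy of $H$ must contain a copy of $H^\star$, this rules out $H$ as a subgraph a.a.s.

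For the direction $\alpha < 1/\nu(H)$, I would apply the second moment method to the count $X$ of labeled copies of $H$. The observation $\nu(H) \geq e(H)/v(H)$ (since $H$ is a subgraph of itself) forces $\alpha < v(H)/e(H)$, so $\E[X] = \Theta(n^{v(H)} p^{e(H)}) \to \infty$. The main work is showing $\mathrm{Var}(X) = o(\E[X]^2)$. Grouping pairs of copies $(H_1, H_2)$ by their intersection type, each isomorphism class of overlap graph $H'$ (with $v(H') \geq 1$ and $e(H') \geq 1$) contributes at most
\[
O\!\left(n^{2v(H) - v(H')} p^{2e(H) - e(H')}\right) \;=\; \E[X]^2 \cdot O\!\left(n^{-v(H')} p^{-e(H')}\right)
\]
to $\E[X^2]$, while the vertex-disjoint and edge-free-overlap terms are already bounded by $\E[X]^2(1 + o(1))$. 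The hypothesis $\alpha < 1/\nu(H)$ means that $e(H')/v(H') \leq \nu(H) < 1/\alpha$ for every subgraph $H' \subseteq H$ with $e(H') \geq 1$, which is exactly the statement that $n^{v(H')} p^{e(H')} = n^{v(H') - \alpha e(H')} \to \infty$. Hence each error term is $o(\E[X]^2)$, and since there are only finitely many intersection types, $\mathrm{Var}(X) = o(\E[X]^2)$. Chebyshev's inequality then yields $\mathbb{P}(X = 0) \to 0$.

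The main technical obstacle is the careful enumeration of intersection patterns in the second moment computation: one must verify that in passing from a pair of labeled copies of $H$ to their common overlap, the resulting graph $H'$ always embeds as a subgraph of $H$, so that the density bound $e(H')/v(H') \leq \nu(H)$ applies uniformly across all error terms. Once this bookkeeping is done, the assumption $\alpha < 1/\nu(H)$ exactly matches the condition needed to kill every error term, which explains why $\nu(H)$ rather than the global density $e(H)/v(H)$ is the correct threshold parameter.
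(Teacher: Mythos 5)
The paper does not actually prove this proposition; it is stated as a citation to Bollob\'as \cite{BollobasDensity} with no argument given. So there is nothing in the paper to compare against directly, but your proof is the standard (and essentially Bollob\'as's original) first-moment/second-moment argument, and it is correct. A few small checks for the record: for the non-appearance direction, passing to a densest subgraph $H^\star$ and applying Markov is exactly right, and the exponent $v(H^\star) - \alpha e(H^\star)$ is negative precisely because $\alpha > 1/\nu(H) = v(H^\star)/e(H^\star)$. For the appearance direction, your variance decomposition by overlap type is sound, and the key point --- that every overlap $H' = H_1 \cap H_2$ with $e(H') \geq 1$ is (isomorphic to) a subgraph of $H$, hence satisfies $e(H')/v(H') \leq \nu(H) < 1/\alpha$ and so $n^{v(H')}p^{e(H')} \to \infty$ --- is exactly what makes each error term $o(\E[X]^2)$; only finitely many overlap types exist, so $\mathrm{Var}(X) = o(\E[X]^2)$ and Chebyshev finishes it. One minor remark: you could be slightly more precise that the edge-disjoint overlap pairs contribute nothing to $\mathrm{Var}(X)$ (those events are independent), rather than merely being absorbed into $\E[X]^2(1+o(1))$, but this does not affect the conclusion. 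Overall the argument is complete and correct.
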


It is easy to see that the essential density of the complete graph $K_{m + 1}$ is $\frac{m}{2}$, from which the following result follows. 

\begin{corollary}\label{cor:cliques}
Suppose $G \sim G(n,p)$ for $p = n^{-\alpha}$.  Then with  high probability we have that $\omega(G) =2d + 1$ if $2/(2d + 1) < \alpha < 1/d$ and $\omega(G) = 2d + 2$ if $1/(d + 1) < \alpha < 2/(2d + 1)$.
\end{corollary}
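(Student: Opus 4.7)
The plan is to deduce the corollary directly from Proposition \ref{prop:density} by computing the essential density of a complete graph and then intersecting the resulting appearance/non-appearance thresholds for consecutive cliques.

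First I would verify that $\nu(K_{m+1}) = m/2$. Any subgraph $H' \subseteq K_{m+1}$ on $j$ vertices is itself a subgraph of $K_j$, so $e(H')/v(H') \leq \binom{j}{2}/j = (j-1)/2$, with equality when $H' = K_j$. This ratio is strictly increasing in $j$, so the maximum over all subgraphs is attained at $j = m+1$, giving $\nu(K_{m+1}) = m/2$ and hence $1/\nu(K_{m+1}) = 2/m$. Plugging into Proposition \ref{prop:density}, for $G \sim G(n, n^{-\alpha})$ we have with high probability that $K_{m+1} \subseteq G$ when $\alpha < 2/m$ and $K_{m+1} \not\subseteq G$ when $\alpha > 2/m$.

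Next I would translate these containment statements into statements about $\omega(G)$. By definition $\omega(G) \geq t$ if and only if $K_t \subseteq G$, so $\omega(G) = t$ with high probability precisely when $K_t \subseteq G$ and $K_{t+1} \not\subseteq G$ simultaneously hold w.h.p. For $t = 2d+1$ this requires $\alpha < 2/(2d) = 1/d$ (containment of $K_{2d+1}$) together with $\alpha > 2/(2d+1)$ (non-containment of $K_{2d+2}$), yielding the interval $2/(2d+1) < \alpha < 1/d$. For $t = 2d+2$ the two conditions become $\alpha < 2/(2d+1)$ and $\alpha > 2/(2d+2) = 1/(d+1)$, producing the interval $1/(d+1) < \alpha < 2/(2d+1)$.

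Since the corollary is an immediate reorganization of Bollobás' threshold, there is no real obstacle once $\nu(K_{m+1})$ is identified. The only point to be careful about is that finitely many high-probability events (appearance of $K_{2d+1}$ together with non-appearance of $K_{2d+2}$, and similarly for the other regime) must be intersected, which is valid by a union bound on the two failure events.
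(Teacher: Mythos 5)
Your proof is correct and takes the same approach the paper intends: the paper simply asserts that $\nu(K_{m+1}) = m/2$ and that the corollary follows from Proposition~\ref{prop:density}, and you have filled in exactly the computation of the essential density and the translation to clique number that the paper leaves implicit.
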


\begin{remark}
If $p = cn^{-(2/(d + 1))}$ for some constant $c$, then the number of $2d + 2$ cliques in $G(n, p)$ can be shown to be Poisson distributed, see for example \cite[Theorem 5.4]{FriKar}. 
\end{remark}

In recent years the techniques of random graphs have been generalized to higher dimensional settings. The \defi{random clique complex} is a model for random simplicial complexes based on the Erd\H{o}s--R\'{e}nyi random graph, first introduced by Kahle \cite{Kah}. One samples a simplicial complex $\Delta$ from this model $\Delta(n, p)$ by taking the clique complex $\Delta(G)$ of an Erd\H{o}s--R\'{e}nyi graph $G \sim G(n, p)$. 

Here we will be considering reduced $i$-dimensional homology groups of a simplicial complex $\Delta$ over various coefficient rings $A$, which we denote by $\tilde H_i(\Delta, A)$. When $A = {\mathbb Z}$ we use $\tilde H(\Delta)$ to denote the \emph{(reduced) integral homology} of $\Delta$.  When $A = {\mathbb Q}$, the \emph{rational homology} of $\Delta$ is a vector space over $\mathbb Q$, and we let $\beta_i(\Delta) = \beta_i(\Delta;{\mathbb Q})$ denote the (topological) \defi{Betti numbers} of $\Delta$, given by the dimension of $\tilde H(\Delta;{\mathbb Q})$.  We emphasize that $\beta_{i,j}(R/I_G)$ and $\beta_i(\Delta)$ mean different things, but the difference will be clear in context.

In this context thresholds for vanishing of homology with specified coefficients provide the natural generalization of connectedness for random graphs.  In particular one of the fundamental results from the study of random clique complexes, which we will need in our study, is the the following result due to Kahle.

\begin{theorem}\cite[Corollary 1.3]{Kah2014}\label{Kah2014}
Suppose $d \in \N$ is fixed and $\alpha \in {\mathbb R}$ satisfies $1/(d + 1) < \alpha < 1/d$. Then with high probability the random clique complex $\Delta \sim \Delta(n, n^{-\alpha})$ satisfies the following:
\begin{itemize}
    \item $\tilde{H}_d(\Delta; \Q) \neq 0$, and 
    \item For $i \neq d$, $\tilde{H}_{i}(\Delta; \Q) = 0$.
\end{itemize}
\end{theorem}
The proof of Kahle's result above depends heavily on the fact that homology is computed with rational coefficients. In particular Kahle's proof of the second part of Theorem \ref{Kah2014} uses \emph{Garland's method}, which provides a way to study homology with coefficients in a field of characteristic zero. An important conjecture in stochastic topology is Kahle's bouquet of spheres conjecture:
\begin{conjecture}\cite[Conjecture 5.2]{Kah2014} \label{conj:spheres}
For $d \geq 3$ and $1/(d + 1) < \alpha < 1/d$ with high probability $\Delta \sim \Delta(n, n^{-\alpha})$ is homotopy equivalent to a bouquet of $d$-dimensional spheres.
\end{conjecture}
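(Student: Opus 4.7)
The plan is to reduce the statement to three technical ingredients, after which a standard obstruction-theoretic argument finishes. Specifically, if with high probability $\Delta := \Delta(n, n^{-\alpha})$ is simply connected, has integer homology concentrated in degree $d$ with $\tilde H_d(\Delta; \Z)$ free abelian, and has $\tilde H_i(\Delta; \Z) = 0$ for $i > d$, then the Hurewicz theorem gives that $\Delta$ is $(d-1)$-connected. A choice of generators of $\tilde H_d(\Delta; \Z)$ then yields a map from a wedge of $d$-spheres into $\Delta$ that induces an isomorphism on all integer homology, and Whitehead's theorem upgrades it to a homotopy equivalence.

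For simple connectivity I would appeal to Babson's sharp threshold result: for $p \gg n^{-1/2}$ the random clique complex $\Delta(n, p)$ is simply connected with high probability. Since $d \geq 3$ we have $p = n^{-\alpha} > n^{-1/3} \gg n^{-1/2}$, so this hypothesis is comfortably satisfied. Vanishing of $\tilde H_i(\Delta; \Z)$ in degrees $1 \leq i \leq d-1$ would then follow from an integer-coefficient refinement of Garland's method: in this regime the expected degree of an $(i-1)$-face tends to infinity, and the spectral-gap estimates underlying Theorem \ref{KahleRationalHomology} give vanishing over any field of sufficiently large characteristic. A union bound over primes bounded by the size of the chain groups can rule out torsion in these degrees, yielding integer vanishing in the intermediate range and freeness of $\tilde H_d(\Delta; \Z)$.

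The main obstacle is vanishing of homology above degree $d$. By Corollary \ref{cor:cliques} the complex has dimension up to $2d+1$, so a potentially nontrivial subcomplex of high-dimensional cliques must contribute no homology at all. Rationally this is part of Theorem \ref{KahleRationalHomology}, but Kahle's argument relies on spectral gaps in a regime where the face count is already sparse, and does not transparently preclude integer torsion or cycles in degrees $d+1, \dots, 2d+1$. A natural attack is to extend the collapsibility techniques of Malen used in Theorem \ref{RegularityTheorem}: show that the subcomplex spanned by faces of dimension larger than $d$ is a.a.s.\ collapsible into lower dimensions, which would simultaneously kill higher homology with any coefficients. Executing this would require a careful analysis of the incidence pattern among $(d+1)$- and $(d+2)$-cliques, and in particular of how the sparse top-dimensional simplices attach to the bulk of $\Delta$.

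This conjecture is stated as open in \cite{KahleRational}, and the principal obstacle to the program above is precisely the promotion of Kahle's rational vanishing above degree $d$ to a statement strong enough to rule out higher homotopy. Either a spectral argument valid over $\Z$, or a random-collapsibility argument tailored to the critical regime $1/(d+1) < \alpha < 1/d$, would suffice; both appear to demand genuinely new ideas, which is why the conjecture has resisted direct attacks via the toolkit used in the present paper.
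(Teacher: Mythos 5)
You have correctly recognized that this statement is an open conjecture (Kahle's bouquet of spheres conjecture), not a theorem proved in the paper; the paper merely records it and remarks that it would follow from strengthening Theorem~\ref{KahleRationalHomology} to integer coefficients, given the known simple connectivity threshold. Your Hurewicz--Whitehead reduction is exactly the paper's intended route, and your identification of the principal obstacle (vanishing of integer homology, including torsion, in degrees $d+1,\dots,2d+1$, plus freeness of $\tilde H_d(\Delta;\Z)$) is accurate and honestly flagged as the missing ingredient.

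One factual slip worth correcting: you state Babson's simple-connectivity result as ``for $p \gg n^{-1/2}$,'' but the threshold established by Babson and independently by Costa--Farber--Horak is at $\alpha = 1/3$, i.e.\ $\Delta(n, n^{-\alpha})$ is simply connected with high probability for $\alpha < 1/3$ and not simply connected for $\alpha > 1/3$. The condition ``$p \gg n^{-1/2}$'' is strictly weaker and would falsely assert simple connectivity for $\alpha \in (1/3, 1/2)$. Your application is nonetheless unaffected, since $d \geq 3$ forces $\alpha < 1/3$ and hence $p = n^{-\alpha} > n^{-1/3}$, which does lie on the correct side of the true threshold. Beyond that, your suggested attack via an extension of Malen's collapsibility machinery to the critical regime is a reasonable speculation not explicitly discussed in the paper, but as you yourself acknowledge it is an unexecuted program rather than a proof, and the conjecture remains open.
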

We note that $d \geq 3$ is necessary since results of Babson \cite{Bab} and independently Costa, Farber, and Horak \cite{CosFarHor} show that $\Delta \sim \Delta(n, n^{-\alpha})$ is simply connected for $\alpha < 1/3$ and not simply connected for $\alpha > 1/3$. In addition, with this knowledge of the fundamental group the Hurewicz theorem implies that Conjecture \ref{conj:spheres} will be proved if one can strengthen Theorem \ref{Kah2014} to work when $\Q$ is replaced by $\Z$. However at this point we only have a limited understanding about the integer homology of random clique complexes.

More recently Kahle's results have been sharpened by employing tools from simple homotopy theory. We will need these methods for our study and we review the basics here. We say that a face $\sigma$ of a simplicial complex is \defi{free} provided it is contained in exactly one proper coface i.e. a distinct face that properly contains $\sigma$. An \defi{elementary collapse} on a simplicial complex is the removal of a free face along with its unique coface. Such a collapse is a homotopy equivalence.  A simplicial complex is said to be \defi{collapsible} provided there is a sequence of elementary collapses that reduce it to a single vertex. For $i \geq 1$, a complex is said to be \defi{$i$-collapsible} provided there is a sequence of elementary collapses so that the resulting complex is at most $(i - 1)$-dimensional.  In particular an $i$-collapsible complex has no homology in degrees at least $i$, but the converse need not hold if $i \neq 1$. For $i \geq 2$ there are many $i$-acyclic complexes which fail to be $i$-collapsible. 

In \cite{Malen}, Malen proves the following theorem regarding $i$-collapsibility of random clique complexes.

\begin{theorem}\cite[Theorem 1.1]{Malen}\label{thm:collapsible}
Fix an integer $d \geq 0$.  If $\alpha > 1/(d + 1)$ then with high probability the complex $\Delta(n, n^{-\alpha})$ is $(d + 1)$-collapsible.
\end{theorem}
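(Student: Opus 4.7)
My plan is to exhibit, with high probability, a discrete Morse matching on the face poset of $\Delta(n,n^{-\alpha})$ that pairs every simplex of dimension $\geq d+1$ with a face of one dimension less; by discrete Morse theory such an acyclic matching yields a sequence of elementary collapses reducing the complex to dimension at most $d$. The first step is to bound the maximum dimension: applying Proposition \ref{prop:density} to the complete graphs $K_{m+1}$ (whose essential density is $m/2$), the hypothesis $\alpha > 1/(d+1)$ gives a constant $D = D(\alpha)$ such that w.h.p.\ $\Delta(n,n^{-\alpha})$ has dimension at most $D$. Thus only the finitely many dimensions $k \in \{d+1,\ldots,D\}$ must be collapsed.

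For each such $k$, I would construct a matching as follows. Consider the bipartite containment graph $B_k$ whose parts are the $k$-faces and $(k-1)$-faces of $\Delta$, with an edge between $\sigma$ and $\tau$ whenever $\tau \subset \sigma$. A matching of $B_k$ saturating the $k$-side amounts to choosing for each $k$-face $\sigma$ a distinct $(k-1)$-subface $\tau_\sigma$. Heuristically Hall's condition should hold because each $k$-face has $k+1$ candidate subfaces, whereas the expected number of $k$-faces containing a given $(k-1)$-face is $\sim np^k = n^{1-\alpha k}$, which tends to $0$ for $k \geq d+1$ and $\alpha > 1/(d+1)$. Rigorously, a first-moment argument shows that the expected count of ``Hall-obstructing'' configurations (collections of $k$-faces sharing too few $(k-1)$-subfaces) vanishes, by using Proposition \ref{prop:density} to bound the essential densities of all such obstructing subgraphs.

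The final step is to combine these dimension-wise matchings into a single acyclic Morse matching on $\Delta$. I would process dimensions from the top down, using a canonical tie-breaking rule (for instance based on a fixed vertex order) to select each $\tau_\sigma$, then verify acyclicity by showing that any cycle in the matching would force a dense obstructing subgraph on a bounded number of vertices that is forbidden w.h.p.\ by Proposition \ref{prop:density}. The main obstacle is this final step: the naive ``every $k$-face has some exclusive $(k-1)$-subface'' estimate is insufficient in the tail of the regime just above $\alpha = 1/(d+1)$, so one must pass to the full bipartite matching / Hall viewpoint and simultaneously control the essential densities of all obstructing configurations uniformly across the finite range of dimensions $d+1 \leq k \leq D$.
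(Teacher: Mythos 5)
Your strategy is genuinely different from Malen's argument (which the paper sketches): Malen establishes the \emph{deterministic} criterion of Theorem~\ref{DeterministicCollapsibility} --- every strongly connected, pure $(d+1)$-dimensional subcomplex has a vertex of degree at most $2d+1$ --- and then verifies that hypothesis by a first-moment bound on dense strongly connected subcomplexes. The criterion yields collapsibility by an explicit greedy peeling (the low-degree vertex produces a free $d$-face), so no global matching ever needs to be constructed or checked for acyclicity. Your plan instead builds a discrete Morse matching level by level via Hall's theorem. The dimension bound via Proposition~\ref{prop:density} is fine, and an acyclic matching with no critical cells above dimension $d$ would indeed give $(d+1)$-collapsibility, so the framework is sound in principle. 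But as written the proposal has real gaps.

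First, the level-wise matchings do not simply combine: if you choose, for every $k$-face with $d+1\le k\le D$, a distinct $(k-1)$-subface, then a cell of dimension $k$ with $d+1\le k\le D-1$ is matched downward \emph{and} can also be the chosen subface of a $(k+1)$-face, i.e.\ matched upward --- which a Morse matching forbids. Processing from the top and only matching cells left unmatched at lower levels can repair this, but then the Hall analysis becomes coupled across levels and is no longer the clean independent problem you describe. Second, and more seriously, Hall's condition is not what is at stake: acyclicity is the actual content of collapsibility, and it is not a consequence of saturation. The boundary of a cross-polytope is $(d+2,2)$-biregular between facets and ridges, so a Hall matching saturating the facet side exists, yet no acyclic matching can avoid a critical top cell because the complex is a sphere. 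Thus ``find a Hall matching, then tie-break'' can produce a matching that has no acyclic refinement at all, and a generic tie-breaking rule gives no acyclicity guarantee. Third, the first-moment arguments you invoke for both Hall obstructions and cycle-producing configurations presuppose that the offending configurations live on a bounded number of vertices; cycles in the modified Hasse diagram (and Hall-deficient families) can a priori be long, and you give no reduction to a bounded-size forbidden subgraph. This reduction is exactly the step Malen's vertex-degree criterion accomplishes, and it is the step your proposal leaves open. So while the discrete Morse viewpoint is a reasonable reformulation of the goal, the plan as stated does not close the argument, and the hard part --- producing an acyclic matching, not merely a saturating one --- is where the proof actually lives.
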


In the results described above we typically consider the random clique complex $\Delta(n, n^{-\alpha})$ for $\alpha$ fixed. From now on unless otherwise specified we will fix $\alpha$ to be between $1/(d + 1)$ and $1/d$, for $d$ a positive integer.  With this choice of $p$ we have from Corollary \ref{cor:cliques} that the clique number $\omega(G)$ takes on one of two values with high probability.  Since the Krull dimension of $R/I_G$ is given by $\omega(G)$ we have the following observation.

\begin{corollary}\label{cor:Krulldim}
Suppose $p = n^{-\alpha}$ with $1/(d+1) < \alpha < 1/d$ for some $d \geq 1$, and let $G = G(n,p)$. Then with high probability the Krull dimension of $R/I_G$ is either $2d + 1$ or $2d + 2$. More specifically the dimension is $2d + 1$ if $2/(2d + 1) < \alpha < 1/d$ and it is $2d + 2$ if $1/(d + 1) < \alpha < 2/(2d + 1)$.
\end{corollary}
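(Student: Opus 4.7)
The statement is essentially a direct consequence of two facts already recorded in the paper, so my plan is to simply combine them. First I would recall the standard identity $\dim(R/I_G) = \omega(G)$ for coedge (Stanley--Reisner) ideals of clique complexes, cited in Section \ref{sec:combinatorialdescriptions} via \cite{Stan}: the Krull dimension of $R/I_G$ is one more than the dimension of $\Delta(G)$, which by definition of the clique complex equals the size of the largest complete subgraph of $G$. This reduces the statement to a purely combinatorial claim about the clique number of $G(n,n^{-\alpha})$.

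Next I would invoke Corollary \ref{cor:cliques} directly: for $p=n^{-\alpha}$ with $2/(2d+1)<\alpha<1/d$ we have $\omega(G)=2d+1$ a.a.s., and for $1/(d+1)<\alpha<2/(2d+1)$ we have $\omega(G)=2d+2$ a.a.s. Observing that $2/(2d+1)$ is the harmonic mean cutoff inside the interval $(1/(d+1),1/d)$, these two subranges together cover every allowed $\alpha$ (except the measure-zero boundary $\alpha=2/(2d+1)$ which is excluded from the statement). Substituting into $\dim(R/I_G)=\omega(G)$ yields both assertions.

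There is no real obstacle here: the only thing to verify is the arithmetic inequality $1/(d+1)<2/(2d+1)<1/d$, which is immediate since $2/(2d+1)$ is the mediant of $1/(d+1)$ and $1/d$. The two high-probability events --- that $\dim(R/I_G)$ equals $\omega(G)$ (which is deterministic, not probabilistic) and that $\omega(G)$ takes the asserted value --- combine without any union bound subtleties, so the proof should be a short paragraph.
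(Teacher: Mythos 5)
Your proof is correct and follows exactly the paper's route: the paper itself prefaces Corollary \ref{cor:Krulldim} by observing that $\dim(R/I_G)=\omega(G)$ and that Corollary \ref{cor:cliques} pins $\omega(G)$ to $2d+1$ or $2d+2$ depending on which side of $2/(2d+1)$ the exponent $\alpha$ lies. Your observation that $2/(2d+1)$ is the mediant (equivalently the harmonic mean) of $1/(d+1)$ and $1/d$, and hence partitions the interval, is the same arithmetic check the paper leaves implicit.
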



\section{Regularity}\label{sec:regularity}

\noindent
In this section we prove Theorem \ref{RegularityTheorem}, which says that if $G \sim G(n,n^{-\alpha})$ for $1/(d + 1) < \alpha < 1/d$ then for any choice of coefficient field $\kk$ we have 
\[\reg(R/I_G) = d+1.\]

To place this result in context it should first be observed that bounding the regularity of arbitrary ideals in terms of $n$, the number of generators, cannot be too fruitful.  Examples dues to Mayr and Meyer \cite{MayMey} show that even for ideals generated by quadratic binomials in $n$ variables, one can achieve regularity on the order of $2^{2^n}$.  On the other hand, for coedge ideals $I_G$ (or more generally monomial ideals generated in degree 2) it can be shown that $\reg(R/I_G) \leq n$.


For the case of random coedge ideals, as usual we suppose $p=n^{-\alpha}$ with $1/(d + 1) < \alpha < 1/d$.  As a warmup to our main result we first observe that it is not hard to establish that the bounds
\[ d+1 \leq \reg(R/I_G) \leq 2d+2\]
hold with high probability for any $G \sim G(n,p)$. 
Indeed, for this choice of $p$ we have from Corollary \ref{cor:cliques} that the clique number of $G$ (and hence of any induced subgraph) is at most $2d+2$ with high probability. Hence no subcomplex of $\Delta(n,p)$ can have homology in degree larger than $2d+1$, and by Hochster's formula this provides the desired upper bound on the $\reg(R/I_G)$. On the other hand, we have from Theorem \ref{Kah2014} that $H_d(\Delta(G); \Q) \neq 0$ with high probability.  Hence another application of Hochster's formula implies that $\beta_{n-(d+1),n} \neq 0$ with high probability. By the universal coefficient theorem, nonvanishing homology over $\Q$ implies nonvanishing homology over any field. Thus for any choice of the coefficient field $\kk$ we have that $\reg(R/I_G) \geq d + 1$ with high probability.

Erman and Yang also make this observation in their work, and pose the question \cite[Question 5.3]{ErmYan} of whether or not the regularity of $R/I_G$ for $1/(d + 1) < \alpha < 1/d$ is exactly $d+1$ with high probability. One might suspect that this would be the case based on Kahle's result (Theorem \ref{Kah2014}) which says that for this choice of $p$ the random clique complex has nonzero rational homology only in degree $d$.  We remark that Erman and Yang do establish the vanishing of any $\beta_{i,i+k}$ for any \emph{fixed} $i$ and $k$ satisfying $k > d+1$ for this regime of $p$.  However this does not establish the desired bound on regularity as we need to consider all entries in these rows.

To prove that $R/I_G$ satisfies $\reg(R/I_G) \leq d + 1$, by Hochster's formula it suffices to prove that with high probability $\Delta \sim \Delta(n, n^{-\alpha})$ has no induced subcomplex with homology in degree larger than $d$. A natural way to prove this would be to bound the probability that a fixed set of vertices induces a subcomplex of $\Delta \sim \Delta(n, n^{-\alpha})$ with homology in degree larger than $d$, and then to sum over all subsets of $[n]$. This is not the approach we take, and instead we consider partial collapsiblity of random clique complexes.

 As mentioned above Malen has studied collapsibility properties of the random clique complex $\Delta \sim \Delta(n,n^{-\alpha})$ (see Theorem \ref{thm:collapsible} from Section \ref{sec:background}).  In order to prove his main result he establishes the following \emph{global} condition on a clique complex $\Delta$ which implies $(d + 1)$-collapbsiblity, and which also rules out the possibility of induced subcomplexes having homology in degree larger than $d$. In what follows, a pure $(d+1)$-dimensional simplicial complex is said to be \defi{strongly connected} if the ridge-facet graph is connected.



\begin{theorem}\cite[Theorem 3.1]{Malen}\label{DeterministicCollapsibility}
Fix $d \geq 0$. Let $\Delta$ be a finite clique complex such that every strongly connected, pure $(d + 1)$-dimensional subcomplex $S \subseteq \Delta$ contains at least one vertex $v$ with $\deg_S(v) \leq 2d + 1$. Then $\Delta$ is $(d + 1)$-collapsible. 
\end{theorem}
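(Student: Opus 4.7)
My plan is to prove Theorem 3.1 by induction on the number of faces of $\Delta$ of dimension at least $d+1$, producing an elementary collapse at each step.

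The key preliminary observation is that the hypothesis forces $\dim\Delta \leq 2d+1$: if $\Delta$ contained a clique on $2d+3$ vertices, then the $(d+1)$-skeleton of that clique is a strongly connected pure $(d+1)$-dimensional subcomplex in which every vertex has degree $2d+2$, contradicting the hypothesis. In the base case, if $\Delta$ has no $(d+1)$-dimensional face, then since $\Delta$ is a clique complex it has no $(d+2)$-clique and hence no higher-dimensional face either, so $\dim\Delta \leq d$ and $\Delta$ is trivially $(d+1)$-collapsible.

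For the inductive step I handle two regimes. When $\dim\Delta = k > d+1$, for any top-dimensional $k$-face $\tau$ of $\Delta$ every $(k-1)$-sub-face $\sigma \subset \tau$ is free in $\Delta$: a second $k$-face $\tau'\supset \sigma$ would force $\tau \cup \tau'$ to be a $(k+2)$-clique, producing a $(k+1)$-face in the clique complex $\Delta$ and contradicting $\dim\Delta = k$. I collapse such a pair $(\sigma,\tau)$, reducing the face count. When $\dim\Delta = d+1$, I take a strongly connected component $S$ of the pure $(d+1)$-dimensional subcomplex of $\Delta$ and obtain from the hypothesis a vertex $v \in S$ with $\deg_S(v) \leq 2d+1$. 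Because $S$ is a strongly connected component, any $(d+1)$-face of $\Delta$ sharing a ridge with a $(d+1)$-face of $S$ already lies in $S$; hence a $d$-face of $S$ is free in $\Delta$ precisely when it lies in exactly one $(d+1)$-facet of $S$. After the collapse, the remaining complex's strongly connected pure $(d+1)$-dimensional subcomplexes are still subcomplexes of the original $\Delta$, so they inherit the low-degree property and the induction closes.

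The main obstacle is the combinatorial heart of the second regime: producing a free $d$-face inside $S$ from the existence of the low-degree vertex $v$. Since $\lk_S(v)$ is a $d$-dimensional complex on at most $2d+1$ vertices, its structure is very constrained, and I expect to find a $d$-face $\sigma' \in \lk_S(v)$ such that $v$ is the unique common neighbor of $\sigma'$ in $S$, making $\sigma'$ a free $d$-face of $S$ (and hence of $\Delta$ by the strongly connected reduction above). The precise argument will likely proceed by an extremal choice of $\sigma'$ among $d$-faces of $\lk_S(v)$, combined with the strong connectivity of $S$ to rule out any other common neighbor of $\sigma'$ in $S \setminus \{v\}$.
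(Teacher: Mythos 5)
The proposal has genuine gaps that prevent it from being a proof.

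\textbf{The key claim in your first regime is false.} You assert that when $\dim\Delta = k > d+1$, \emph{every} $(k-1)$-subface $\sigma$ of a top $k$-face $\tau$ is free, because a second $k$-face $\tau' \supset \sigma$ would force $\tau \cup \tau'$ to be a $(k+2)$-clique. This implication does not hold. Write $\tau = \sigma \cup \{v\}$ and $\tau' = \sigma \cup \{w\}$. The union $\tau \cup \tau'$ is a clique if and only if $v$ and $w$ are adjacent, and nothing forces this. For a concrete counterexample at $d=1$, $k=3$: let $G$ be the graph on $\{1,2,3,4,5\}$ in which $\{1,2,3,4\}$ and $\{1,2,3,5\}$ are $4$-cliques but $4 \not\sim 5$. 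Then $\Delta(G)$ has $\dim = 3$, the $2$-skeleton is strongly connected with vertices $4$ and $5$ of degree $3 \leq 2d+1$, so the hypothesis holds, yet the ridge $\{1,2,3\}$ lies in two facets and is \emph{not} free. (Other ridges happen to be free here, but you have not proved that some free ridge must always exist, only the stronger and incorrect ``every ridge is free.'') This also means you cannot simply collapse away top-dimensional faces one pair at a time.

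\textbf{The second regime is not actually proved.} You acknowledge that producing a free $d$-face of $S$ inside $\lk_S(v)$ is ``the main obstacle'' and describe only an ``expected'' extremal argument. This is the combinatorial content of the theorem and cannot be left as a sketch; indeed, it is not even clear a priori that a pure strongly connected $(d+1)$-complex (sitting inside a clique complex) with one vertex of degree $\leq 2d+1$ must have a free $d$-face at all, and neither strong connectivity nor the flag structure of the ambient complex is brought to bear in a precise way. There is also a structural concern with the induction: after an elementary collapse, the resulting complex is generally no longer a clique complex, yet your regime-one argument leans on the flag property to derive a contradiction. You address the preservation of the low-degree hypothesis in regime two, but the clique-complex hypothesis is not preserved, so the inductive invariant needs to be reformulated carefully. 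As it stands, the argument does not go through.
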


 Malen shows that if $\alpha > 1/(d+1)$ then with high probability the complex $\Delta \sim \Delta(n, n^{-\alpha})$ satisfies the hypothesis of Theorem \ref{DeterministicCollapsibility} (see Lemma 3.6 of \cite{Malen}), which leads to a proof of Theorem \ref{thm:collapsible}. The key insight here is that the hypothesis of Theorem \ref{DeterministicCollapsibility} is a condition on $\Delta$ which is closed under induced subcomplexes. This leads us to a short proof of Theorem \ref{RegularityTheorem}.

\begin{proof}[Proof of Theorem \ref{RegularityTheorem}]
We fix $\alpha$ with $1/(d+1) < \alpha$. We claim that if $\Delta \sim \Delta(n, n^{-\alpha})$ then with high probability every induced subcomplex of $\Delta$ satisfies the assumptions of Theorem \ref{DeterministicCollapsibility}. By Lemma 3.6 of \cite{Malen}, with high probability $\Delta$ itself satisfies the assumptions of Theorem \ref{DeterministicCollapsibility}, and so it suffices to verify that these assumptions are preserved by taking induced subcomplexes. 

For this suppose $\Delta$ is a flag complex that satisfies the assumptions of Theorem \ref{DeterministicCollapsibility}, and let $S$ be a subset of vertices of $\Delta$. We use $\Delta(S)$ to denote the flag complex induced on $S$.  Now suppose $Y$ is a strongly connected, pure $(d + 1)$-dimensional subcomplex of $\Delta(S)$.  We first observe that $Y$ is also a strongly connected, pure $(d + 1)$-dimensional subcomplex of $\Delta$. Therefore $Y$ contains a vertex $v$ with $\deg_Y(v) \leq 2d + 1$, and we have that $\Delta(S)$ satisfies the assumptions of Theorem \ref{DeterministicCollapsibility}.

Now $\Delta \sim \Delta(n, n^{-\alpha})$ satisfies the assumptions of Theorem \ref{DeterministicCollapsibility} with high probability, so with high probability every induced subcomplex of $\Delta$ also satisfies those assumption. This implies that every induced subcomplex of $\Delta$ is $(d + 1)$-collapsible, and in particular has no homology above degree $d$.  Thus $\reg(R/I_G) \leq d+ 1$ for $G \sim G(n, n^{-\alpha})$.

On the other hand if $\alpha$ is also bounded above by $1/d$ then by Theorem \ref{Kah2014}, $\Delta \sim \Delta(n, n^{-\alpha})$ has homology in degree $d$ so $\reg(R/I_G) \geq d + 1$ as well, and the result follows.
\end{proof}

Theorem \ref{RegularityTheorem} shows that if  $1/(d+1) < \alpha$ then with high probability no subcomplex of $\Delta \sim \Delta(n,n^{-\alpha})$ has homology in degree larger than $d$.  We end this section with a result that establishes the vanishing of the \emph{expectation} of higher Betti numbers in this regime. This fact will be used in the proof of Theorem \ref{corstaircase} given in Section \ref{sec:log}, but we include it here since it utilizes the same collapsibility arguments as in the proof of Theorem \ref{RegularityTheorem}. Note that this statement is stronger than simply the result that $\beta_{d + 1}(\Delta; {\mathbb Q}) = 0$ with high probability.

\begin{lemma}\label{lemma2ndmomentbelow}
Fix an integer $d \geq 0$ and let $1/(d + 1) < \alpha$. Then for $\Delta \sim \Delta(n, n^{-\alpha})$ we have
\[\E(\beta_{d + 1}(\Delta; \Q)) = o(1).\]
\end{lemma}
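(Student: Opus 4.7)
The plan is to combine the deterministic collapsibility criterion of Theorem \ref{DeterministicCollapsibility} with a refined first-moment argument extending Malen's Lemma 3.6. Let $B$ denote the event that $\Delta$ contains a strongly connected pure $(d+1)$-dimensional subcomplex $S$ with minimum vertex degree in $S$ at least $2d+2$. By Theorem \ref{DeterministicCollapsibility}, on $B^c$ the complex $\Delta$ is $(d+1)$-collapsible and in particular $\beta_{d+1}(\Delta; \Q) = 0$. Hence
\[ \E[\beta_{d+1}(\Delta; \Q)] \;=\; \E\bigl[\beta_{d+1}(\Delta; \Q)\,\mathbf{1}_B\bigr] \;\leq\; \E\bigl[f_{d+1}(\Delta)\,\mathbf{1}_B\bigr], \]
where $f_{d+1}(\Delta)$ is the number of $(d+2)$-cliques of $G$. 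So it suffices to show $\E[f_{d+1}(\Delta)\,\mathbf{1}_B] = o(1)$.

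To bound this, I would union-bound over minimal witnesses $M$ for $B$: minimal strongly connected pure $(d+1)$-dimensional complexes with vertex minimum degree $\geq 2d+2$. For a candidate $M$ with $v$ vertices and $e$ edges, handshaking combined with the degree constraint yields $e \geq (d+1) v$, so the expected number of copies of $M$ in $\Delta$ is at most $\binom{n}{v} p^e \leq n^{v(1 - \alpha(d+1))}$. For $\alpha > 1/(d+1)$ this decays exponentially in $v$. The key quantity
\[ \sum_M \sum_\sigma \Pr(\sigma \cup M \subseteq \Delta) \]
then bounds $\E[f_{d+1}(\Delta)\,\mathbf{1}_B]$, where $\sigma$ runs over $(d+2)$-subsets of $[n]$ and $M$ over minimal bad isomorphism types. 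The inner sum over $\sigma$ adds at most a polynomial $O(n^{d+2})$ factor (from the dominant case $\sigma \cap V(M) = \emptyset$), which is absorbed by the exponential decay in $v$ once $v$ exceeds some $v_0 = v_0(d, \alpha)$; the finitely many cases with $v \leq v_0$ each contribute $o(1)$ individually via Proposition \ref{prop:density} applied to the fixed graph $M \cup \sigma$.

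The main obstacle is balancing the exponential-in-$v$ decay against the $2^{O(v^2)}$ growth in the number of isomorphism types of $M$ on $v$ vertices. A standard truncation into $v \leq C \log n$ (where the enumeration cost is polynomially manageable and each isomorphism type contributes an expected copy count that decays faster than any polynomial) and $v > C \log n$ (where a cruder union bound on the probability that $\Delta$ contains any pure $(d+1)$-dimensional subcomplex with minimum vertex degree $\geq 2d+2$ on $v$ vertices suffices) should yield the desired $o(1)$ estimate. This parallels the structure of Malen's argument for $\Pr(B) = o(1)$ and requires no essentially new probabilistic input beyond tracking the extra $f_{d+1}$ factor through the bookkeeping.
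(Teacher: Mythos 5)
There is a genuine gap. Your reduction to $\E[f_{d+1}(\Delta)\,\mathbf{1}_B] = o(1)$ cannot succeed: both $f_{d+1}$ and $\mathbf{1}_B$ are increasing functions of the edge indicators, so by the FKG inequality
\[
\E\bigl[f_{d+1}(\Delta)\,\mathbf{1}_B\bigr] \;\geq\; \E\bigl[f_{d+1}(\Delta)\bigr]\cdot\Pr(B),
\]
and the right-hand side does not tend to zero for $\alpha$ close to $1/(d+1)$. Indeed $\E[f_{d+1}] = \binom{n}{d+2}p^{\binom{d+2}{2}} = \Theta\bigl(n^{(d+2)(1-\alpha(d+1)/2)}\bigr)$, which tends to infinity whenever $\alpha < 2/(d+1)$; and $\Pr(B)$ is at least the probability of a $(d+1)$-dimensional cross polytope appearing, which (as $K_{2,\dots,2}$ is strictly balanced of density $d+1$) is of order $n^{2(d+2)(1-\alpha(d+1))}$. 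The product is $n^{(d+2)(3 - \tfrac{5}{2}\alpha(d+1))}$, which diverges for $1/(d+1) < \alpha < 6/(5(d+1))$. This is exactly where your claim that ``the finitely many cases with $v \leq v_0$ each contribute $o(1)$ individually via Proposition \ref{prop:density}'' breaks down: the proposition gives $\Pr(M\cup\sigma \subseteq G) \to 0$ for a fixed labelled copy, but what you need is $\sum_\sigma \Pr(\sigma\cup M\subseteq\Delta) = O\bigl(n^{d+2-\alpha\binom{d+2}{2}+v(1-\alpha(d+1))}\bigr)$ to be $o(1)$, and that exponent is positive for small constant $v$ such as $v = 2(d+2)$.

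The fix, which is the route the paper takes, is to avoid $f_{d+1}$ entirely: observe that the support of any minimal $(d+1)$-cycle must itself be a pure, strongly connected $(d+1)$-complex with minimum vertex degree $\geq 2d+2$ (since otherwise Theorem \ref{DeterministicCollapsibility} would apply and contradict minimality). Consequently $\beta_{d+1}(\Delta;\Q)$ is bounded by the number of such witness subcomplexes --- not by the number of top-dimensional faces. One then splits as you suggest: large witnesses (on $>L$ vertices) occur with probability $o(n^{-M})$, in which case the crude bound $\beta_{d+1} \leq n^{d+2}$ is acceptable; and small witnesses have expected count $\sum_{k\leq L} 2^{k^2}\bigl(n^{1-\alpha(d+1)}\bigr)^k = O(n^{1-\alpha(d+1)}) = o(1)$. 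The $2^{k^2}$ enumeration cost is swallowed harmlessly because $L$ is a constant. Your two-regime truncation and the strongly connected/degree constraint are the right ingredients; the missing step is to bound $\beta_{d+1}$ by the witness count rather than by $f_{d+1}$.
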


\begin{proof}
 We first note that for any complex $\Delta$, if $K$ is the support of a minimal $(d + 1)$-cycle in $H_{d + 1}(\Delta; \Q)$ then $K$ must be a pure, strongly connected $(d + 1)$-complex and have all vertices having degree at least $2d + 2$. Indeed if $\Delta$ contained a vertex of degree at most $2d + 1$ then by Theorem \ref{DeterministicCollapsibility}, $K$ would be $(d + 1)$-collapsible and in particular would  admit a free $d$-dimensional face, contradicting its minimality.  Hence we have that $\beta_{d + 1}(\Delta; \Q)$ is at most the number of pure, strongly connected $(d + 1)$-complexes with minimum vertex degree at least $2d + 2$ contained in $\Delta$.

Now let $\Delta \sim \Delta(n, n^{-\alpha})$ as above.  Our argument to establish the vanishing of $\E(\beta_{d + 1}(\Delta; \Q))$ splits into two parts. We first show that for any $M > 0$ with probability at least $1 - o(n^{-M})$ there are no large strongly connected $(d + 1)$-dimensional subcomplexes in $\Delta$. We then bound $\E(\beta_{d + 1}(\Delta; \Q))$ by enumerating the expected number of small strongly connected, pure $(d + 1)$-dimensional complexes with minimum vertex degree $2d + 2$.

The argument for ruling out large strongly connected subcomplexes first appeared in Lemma 5.1 of \cite{Kah}. Observe that the $(d + 1)$-dimensional faces of a pure, strongly connected $(d + 1)$-complex $K$ can be ordered $(\sigma_1, \sigma_2, ..., \sigma_m)$ so that each $\sigma_i$ intersects some previous $\sigma_j$ exactly in a $d$-dimensional face. This ordering then induces an ordering on the vertices $(v_1, .., v_{d + 2}, v_{d + 3}, ..., v_k)$ as follows.  We take $v_1, ..., v_{d + 2}$ to be the vertices of $\sigma_1$, and then add each further $v_i$ as it appears in $\sigma_1 \subseteq \sigma_1 \cup \sigma_2 \subseteq \cdots \subseteq \sigma_1 \cup \cdots \cup \sigma_m$. From the ordering on the simplices, each time a vertex is added (other than the first $d + 2$) exactly $d + 1$ edges are added. So if $K$ is to have exactly $d + 2 + L$ vertices then it has at least $\binom{d + 2}{2} + L(d + 1)$ edges. So for $K$ any strongly connected, pure $(d + 1)$-complex on $d + 2 + L$ vertices, the probability that $K$ is contained in $\Delta$ is at most
\begin{eqnarray*}
\binom{n}{d + 2 + L} (d + 2 + L)! (n^{-\alpha})^{(d + 2)(d + 1)/2 + L(d + 1)} &\leq& (d + 2 + L)! n^{d + 2 + L - \alpha((d + 2)(d + 1)/2 + L(d + 1))} \\
&=& (d + 2 + L)! n^{d + 2 - \alpha(d + 2)(d + 1)/2 + L(1 - \alpha(d + 1))}.
\end{eqnarray*}
The $(d + 2 + L)!$ factor comes from the number of ways to order the vertices, and the first inequality comes from the fact that $n^m \geq \binom{n}{m}$. Additionally, as there are only finitely many graphs on $d + 2 + L$ vertices, the probability that $\Delta$ contains a strongly connected, pure $(d+1)$-subcomplex on $d + 2 + L$ vertices is at most
\[C n^{d + 2 + \alpha(d + 2)(d + 1)/2 + L(1 - \alpha(d + 1))},\]
for some constant $C = C(d, L)$ depending on $d$ and $L$. Thus by setting $L$ large enough this can be made to be $o(n^{-M})$ for any constant $M$, as $1 - \alpha(d + 1) < 0$.

Next we turn our attention to small strongly connected $(d + 1)$-subcomplexes of $\Delta$ with minimum vertex degree $2d + 2$. Here ``small" means on fewer than $L$ vertices for $L$ a large constant. Note that the 1-skeleton of a strongly connected $(d + 1)$-subcomplex with minimum vertex degree $2d + 2$ has essential density at least $(d + 1)$, since each vertex contributes at least $2d + 2$ to the degree sum. Hence the probability that any such complex of $k$ vertices is contained in $\Delta(n, n^{-\alpha})$ is at most
\[\binom{n}{k}k!(n^{-\alpha})^{(d + 1)k} \leq \left(n^{1 - \alpha(d + 1)}\right)^k.\]
Now for any $k$ the number of graphs on $k$ vertices is at most $2^{k^2}$, so the expected number of pure, strongly connected $(d + 1)$-dimensional subcomplexes of $\Delta$ on at most $L$ vertices is at most
\begin{eqnarray*}
\sum_{k = 1}^{L} 2^{k^2} \left(n^{1 - \alpha(d + 1)}\right)^k &\leq& 2^{L^2} \sum_{k = 1}^{\infty} (n^{1 - \alpha(d + 1)}) = O(n^{1 - \alpha(d + 1)}.) 
\end{eqnarray*}
Thus given any $M$ there exists some large constant $L$ so that the probability that $\Delta \sim \Delta(n, n^{-\alpha})$ has a pure $(d + 1)$-dimensional, strongly connected subcomplex on more than $L$ vertices is at most $o(n^{-M})$. In this case we may bound the expected size of $\beta_{d + 1}(\Delta; \Q)$ by the trivial upper bound of $n^{d + 2}$. If $\Delta$ does not have a pure $(d + 1)$-dimensional, strongly connected subcomplex on $L$ vertices then the expected size of $\beta_{d + 1}(\Delta; \Q)$ is $O(n^{1 - \alpha(d + 1)}) = o(1).$ So for any $M > 0$ we have
\[ \E(\beta_{d + 1}(\Delta; \Q) \leq o(1) + n^{d + 2 - M} \]
which is seen to be $o(1)$ for $M \geq d + 2$.
\end{proof}

\section{Projective dimension, depth, and extremal Betti numbers}\label{sec:proj}

In this section we prove Theorem \ref{PdimTheorem}, which says that for $p = n^{-\alpha}$ with $1/(d+1) < \alpha < 1/d$ and $G \sim G(n,p)$, the projective dimension of $R/I_G$ over any field $\kk$ is bounded by
\[n-(d+1) \leq \pdim(R/I_G) \leq n-(d/2),\]
and for the case $\kk = {\mathbb Q}$ we have the precise value
\[\pdim(R/I_G) = n-(d+1).\]

Before turning to the proof we make some preliminary observations. As $R/I_G$ is a finitely generated module over $R = \kk[x_1, x_2, \dots, x_n]$, the Hilbert syzygy theorem says that $\pdim(R/I_G) \leq n$.  As for a lower bound, recall from Corollary \ref{cor:Krulldim} that if $p = n^{-\alpha}$ with $1/(d+1) < \alpha < 1/d$ then with high probability the Krull dimension of $R/I_G$ is at most $2d+2$.  Since the projective dimension of $R/I_G$ is at least the codimension of $I$ this gives a trivial lower bound of $\pdim(R/I_G) \geq n - (2d+2)$.

We claim that for $G \sim G(n, n^{-\alpha})$ an application of Hochster's formula and a result from \cite{Kah2014} (see Theorem \ref{Kah2014} above) provides the improved lower bound
\[\pdim(R/I_G) \geq n - (d + 1),\]
for coefficients in any field.  To see this we assume $G$ is a graph on $n$ vertices and apply Hochster's formula with $k = (d + 1)$ and $i = n - (d + 1)$. For any field $\kk$ we then get
\[\beta_{n - (d + 1), n}(R/I_G) = \dim_{\kk}(\tilde{H}_{d}(\Delta(G); \kk)).\]
\noindent
So if $G \sim G(n, n^{-\alpha})$ for $1/(d + 1) < \alpha < 1/d$ then Theorem \ref{Kah2014} implies that $H_d(\Delta(G); \Q) \neq 0$ with high probability, and hence by the universal coefficient  $H_d(\Delta(G), \kk)$ is nonvanishing for any field $\kk$.  To summarize our discussion we have the following warmup result.

\begin{proposition}\label{prop:pdim}
Fix an integer $d$ and suppose $p = n^{-\alpha}$ for $\alpha$ satisfying $1/(d+1) < \alpha < 1/d$.  Then for any coefficient field $\kk$ and $G \sim G(n,p)$ we have with high probability
\[n-(d+1) \leq \pdim(R/I_G) \leq n.\]
\end{proposition}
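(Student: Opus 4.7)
The strategy is to assemble two results already in hand: the Hilbert syzygy theorem for the upper bound, and one careful application of Hochster's formula combined with Kahle's theorem on rational homology for the lower bound.

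First, the upper bound $\pdim(R/I_G) \leq n$ is deterministic and requires no probabilistic input. It is exactly the Hilbert syzygy theorem applied to the finitely generated graded module $R/I_G$ over $R = \kk[x_1, \dots, x_n]$, which has precisely $n$ variables.

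For the lower bound, the plan is to exhibit a single nonzero Betti number whose column index equals $n - (d+1)$. The natural candidate is $\beta_{n-(d+1),\, n}(R/I_G)$, because Hochster's formula (Theorem \ref{thm:Hochster}) with $j = n$ has only one term in its sum, namely the one indexed by the unique subset $S = [n]$, giving
\[
\beta_{n-(d+1),\, n}(R/I_G) \;=\; \dim_{\kk} \tilde{H}_d(\Delta(G); \kk).
\]
Thus the problem reduces to verifying that $\tilde{H}_d(\Delta(G); \kk)$ is nonzero with high probability, for every field $\kk$.

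To finish, the plan is to invoke Theorem \ref{KahleRationalHomology}: in the regime $1/(d+1) < \alpha < 1/d$, with high probability $\tilde H_d(\Delta(G); \Q) \neq 0$. Since $\tilde H_d(\Delta(G); \Q) \cong \tilde H_d(\Delta(G); \Z) \otimes_\Z \Q$, nonvanishing over $\Q$ forces $\tilde H_d(\Delta(G); \Z)$ to contain a free summand, and by the universal coefficient theorem this free part survives tensoring with any field $\kk$. Hence $\tilde H_d(\Delta(G); \kk) \neq 0$ with high probability, so $\beta_{n-(d+1),\, n}(R/I_G) \neq 0$ and consequently $\pdim(R/I_G) \geq n-(d+1)$. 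There is essentially no obstacle here; the role of the proposition is purely to package these known facts into the two-sided bracket that the main Theorem \ref{PdimTheorem} later tightens on both sides.
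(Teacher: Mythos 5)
Your argument is correct and is essentially identical to the paper's: Hilbert syzygy for the upper bound, and Hochster's formula applied at $(i,j) = (n-(d+1), n)$ combined with Theorem \ref{KahleRationalHomology} and the universal coefficient theorem for the lower bound. Your spelling out of the universal coefficient step (nonvanishing over $\Q$ forces a free $\Z$-summand, which survives tensoring with any field) is a slightly more explicit version of the same reasoning the paper invokes.
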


In particular we see that if $I_G$ is a random coedge ideal of fixed Krull dimension then the projective dimension of $R/I_G$ grows with the number of variables.  In what follows we will find better upper bounds on the projective dimension for arbitrary $\kk$, and the precise value for the case $\kk = \Q$. We will see that the techniques we use for the latter case are only applicable in the case of rational coefficients, and it remains an open problem to determine if the bound for $\Q$ holds for an arbitrary field. For our proofs we will use the combinatorial description of projective dimension via cohomological vertex connectivity $\kappa_{\kk}^i$ established in Lemma \ref{combinatorialpdim}.

\subsection{Spectral gaps and projective dimension over ${\mathbb Q}$}\label{sec:spectral}

We first consider the case that $\kk = {\mathbb Q}$ and examine the parameters $\kappa_{\Q}^i(G)$ for $i \geq 0$ and $G \sim G(n, p)$. It would be interesting to find closer estimates for $\kappa_{\Q}^i$ (see our discussion in Section \ref{sec:higherconn}), but for us it will be sufficient to establish the following result.

\begin{theorem}\label{thm:connectivity}
Suppose $d$ is an integer, fix $\alpha$ with $1/(d + 1) < \alpha < 1/d$, and let $G \sim G(n,n^{-\alpha})$. Then for any $0 \leq i < d$ and $L > 0$ we have  
\[\kappa_{\Q}^i(G) > L\]
with high probability.
\end{theorem}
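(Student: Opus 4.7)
The plan is to combine a quantitative version of Kahle's rational vanishing theorem (Theorem \ref{KahleRationalHomology}) with a union bound over subsets. Unraveling the definition, $\kappa_\Q^i(G) > L$ means that for every subset $S \subseteq V(G)$ with $|S| \leq L$, the induced clique complex $\Delta(V(G) \setminus S)$ has vanishing $i$-th rational cohomology. For a fixed $S$, the induced graph $G[V(G) \setminus S]$ is distributed as $G(n - |S|, n^{-\alpha})$, and since $|S|\leq L$ is a constant, the effective exponent $-\log(n^{-\alpha})/\log(n - |S|)$ still lies in $(1/(d+1), 1/d)$ for $n$ large. So Kahle's theorem already gives, for each fixed $S$, vanishing of $\tilde H^i(\,\cdot\,;\Q)$ for $i < d$ with probability $1-o(1)$. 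To handle all $O(n^L)$ such subsets simultaneously, the plan is to upgrade this ``high probability'' to a polynomial rate.

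The upgrade comes from unpacking Kahle's proof, which uses \emph{Garland's method} (to be reviewed in this section): vanishing of $\tilde H^i$ reduces to spectral gap estimates for graph Laplacians of links of faces of dimension at most roughly $i-1 \leq d-2$. For such a face of $G$, the common neighborhood has size concentrated around $n^{1-(i-1)\alpha}$, which is polynomial in $n$ because $(i-1)\alpha < d\alpha < 1$; conditioning on the face being a clique, this neighborhood supports an Erd\H os--R\'enyi graph with edge probability $n^{-\alpha}$ and expected degree $n^{1-i\alpha}$, which also grows polynomially since $\alpha < 1/d$. Standard spectral concentration results for sparse Erd\H os--R\'enyi graphs of the type used in \cite{KahleRational} then give normalized Laplacian spectral gap $1-o(1)$ for each such link with failure probability $o(n^{-M})$ for every constant $M$. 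A Cauchy interlacing argument handles the small perturbation from deleting up to $L$ vertices of $S$ from each link, and a union bound over links and subsets $S$ then closes the argument.

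The main technical obstacle is calibrating the sparse spectral concentration strongly enough to survive the $O(n^L)$ union bound over subsets; this is most delicate for links of faces of the largest allowed dimension, where the expected internal degree $n^{1-i\alpha}$ is smallest and closest to constant. The approach is inherently characteristic-zero, since Garland's method rests on the self-adjoint decomposition of Laplacians over $\R$, and this is reflected in the fact that the theorem is stated only for $\kk = \Q$.
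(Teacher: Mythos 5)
The overall strategy you describe---a union bound over subsets $S$ of size at most $L$ combined with a quantitative, polynomial-failure-rate version of Kahle's rational vanishing theorem via Garland's method and sparse spectral concentration---is exactly the paper's strategy, including the specific spectral input (the Hoffman--Kahle--Paquette result, Theorem~\ref{HKPspectralgap}).

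The one substantive divergence is where you propose a Cauchy interlacing argument to pass from the spectral gap of a link in $\Delta(G)$ to that of the corresponding link after $S$ is removed. As stated this is a gap: Cauchy interlacing applies to principal submatrices of a fixed Hermitian matrix, and while the adjacency matrix of the perturbed link is a principal submatrix of the original, the normalized Laplacian $\mathcal{L} = I - D^{-1/2}AD^{-1/2}$ is not, because every entry of $D$ shifts when vertices are deleted. One could try to patch this by combining adjacency interlacing with degree concentration, but the paper sidesteps the issue entirely: it conditions on $S$ first, so the induced complex is distributed as $\Delta(n - L, n^{-\alpha})$, and then observes that the link of a fixed $(i-1)$-face in that complex is distributed as a two-parameter random graph $G(n - L - i;\, n^{-i\alpha},\, n^{-\alpha})$ (vertex probability $n^{-i\alpha}$, edge probability $n^{-\alpha}$). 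Lemma~\ref{twoparameter} then adapts Hoffman--Kahle--Paquette to this two-parameter model, yielding failure probability $(n^{1-i\alpha})^{-M}$ with no perturbation step at all.

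Two smaller points. First, your exponents are each off by one: the link of an $(i-1)$-dimensional face (which spans $i$ vertices) has expected size $\Theta(n^{1-i\alpha})$, not $n^{1-(i-1)\alpha}$, and the expected internal degree of that link graph is $\Theta(n^{1-(i+1)\alpha})$, not $n^{1-i\alpha}$; both remain polynomial since $i < d$ and $\alpha < 1/d$, so the conclusion survives, but the bookkeeping needs repair. Second, the version of Garland's method used here requires the relevant skeleton to be pure-dimensional; the paper verifies (Condition~(1) of its proof) that the $(i+1)$-skeleton of $\Delta(n - L, n^{-\alpha})$ is pure with exponentially small failure probability, and your sketch should say something to this effect before invoking Garland.
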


Indeed this rough estimate on $\kappa_{\mathbb Q}^i(G)$ is enough to provide a proof of Theorem \ref{PdimTheorem} as follows.

\begin{proof}[Proof Theorem \ref{PdimTheorem} Part (2)]
Recall that by Lemma \ref{combinatorialpdim} we have for any $G$ that the projective dimension of $R/I_G$ over $\Q$ is given by
\[\pdim(R/I_G) = n - \min_{i \geq 1}\{i + \kappa_{\Q}^{i - 1}(G)\}. \]
First note that for $i \geq d + 2$, we have  $i + \kappa_{\Q}^{i - 1}(G) \geq d + 2$ and hence we can restrict to $i \leq d+1$ in the calculation of the quantity above. Now suppose $G \sim G(n,p)$ for $p = n^{-\alpha}$ as above.  By Theorem \ref{Kah2014}, we have $\kappa_{\mathbb{Q}}^d(G) = 0$ with high probability and hence for $i = d + 1$ we get that $i + \kappa_{\Q}^{i - 1}(G) = d + 1$. But for any $i \leq d$ we have by Theorem \ref{thm:connectivity}, that the value of $i + \kappa_{\Q}^{i - 1}(G)$ can be made arbitrarily large as $n \rightarrow \infty$. Therefore, for $n$ large enough, the minimum of the given set is $d + 1$, and we conclude that $\pdim(R/I_G) = n-(d+1)$ for the case $\kk = \Q$.
\end{proof}

The rest of this section will be devoted to the proof of Theorem \ref{thm:connectivity}.  We remark that the case $i=0$ follows from the Erd\H{o}s and R\'{e}nyi result regarding vertex connectivity of $G(n, p)$, discussed in Section \ref{sec:background}.  Here we will focus our attention on $i \geq 1$, where we will need some tools from spectral graph theory. Recall that by Theorem \ref{Kah2014} our choice of $\alpha$ satisfying $1/(d+1) < \alpha < 1/d$ implies that $\Delta \sim \Delta(n, n^{-\alpha})$ has rational homology only in degree $d$. Kahle's proof of this result (from \cite{Kah2014}) relies on \emph{Garland's method}, a tool from spectral graph theory that can be used to prove homology-vanishing statements for simplicial complexes.

We review the basics of spectral graph theory here and refer to \cite{Chu} for further details. If $G$ is a graph on $n$ vertices, denote by $A$ its $n \times n$ adjacency matrix and $D$ its degree matrix, i.e. the $n \times n$ diagonal matrix whose diagonal entries are the degree sequence of $G$. If $G$ has no isolated vertices, the \defi{normalized Laplacian} of $G$ is the $n \times n$ symmetric matrix given by $\mathcal{L} = I - D^{-1/2}AD^{-1/2}$, where $I$ is the $n \times n$ identity matrix. The \defi{eigenvalues} of $G$ are defined to be the eigenvalues of $\mathcal{L}$. It is well known that the eigenvalues of $G$ always lie in the interval $[0, 2]$, with the multiplicity of the eigenvalue 0 being the number of connected components of $G$ (in particular 0 is always the smallest eigenvalue of $G$). If we order the eigenvalues $0 = \lambda_1 \leq \lambda_2 \leq \cdots \leq \lambda_n$ then the \defi{spectral gap} of $G$ is defined as $\lambda_2$. There is a long history of relating the spectral gap of a graph $G$ to various graph properties (e.g. connectivity parameters) of $G$ itself. See for example \cite{Chu} for further discussion.

Also recall that if $\sigma$ is a face of a simplicial complex $X$ then the \defi{link} of $\sigma$ is given by
\[\lk_X(\sigma) = \{\tau \in X:\text{$\sigma \cup \tau \in X$ and $\sigma \cap \tau = \emptyset$}\}.\]
\noindent
Observe that if $X$ is a pure $k$-dimensional complex then the link of a $(k - 2)$-dimensional face is a graph.  With this in place we can state Garland's result \cite{Garland}, in particular a special case of it from \cite[Theorem 2.1]{BalSwi}

\begin{theorem}[Garland, Ballmann--\'{S}wi\c{a}tkowski]
Let $X$ be a pure $k$-dimensional finite simplicial complex such that for every $(k - 2)$-dimensional face $\sigma$ the link $\lk_X(\sigma)$ is connected and has spectral gap larger than $1 - 1/k$. Then $H^{k - 1}(X; \Q) = 0$.
\end{theorem}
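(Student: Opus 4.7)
The approach is the classical Garland / Ballmann--\'Swi\c{a}tkowski strategy, which combines Hodge theory with a localization identity for the combinatorial Laplacian acting on $(k-1)$-cochains. First I would endow the rational cochain spaces $C^i(X;\Q)$ with the standard weighted inner product, in which each face $\tau$ is weighted by the number of top-dimensional faces of $X$ containing $\tau$. With respect to this inner product the coboundary maps $d_i$ admit adjoints $d_i^*$, and one forms the combinatorial Laplacian $\Delta_{k-1} = d_{k-2}\, d_{k-2}^* + d_{k-1}^*\, d_{k-1}$ on $C^{k-1}(X;\Q)$. Hodge theory over a field of characteristic zero identifies $H^{k-1}(X;\Q)$ with $\ker \Delta_{k-1}$, and so the problem reduces to showing that every harmonic cochain $f$, that is, one satisfying both $d_{k-1} f = 0$ and $d_{k-2}^* f = 0$, must vanish.

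The central computation is Garland's localization formula. For each $(k-2)$-face $\sigma$, the link $\lk_X(\sigma)$ is a graph whose vertices index the $(k-1)$-faces $\sigma \cup \{v\}$ of $X$ and whose edges index the $k$-faces $\sigma \cup \{u,v\}$ of $X$; consequently any cochain $f \in C^{k-1}(X;\Q)$ restricts, after deleting $\sigma$, to a function $f_\sigma$ on the vertex set of this link graph. By unwinding the definitions of $d_{k-1}$ and $d_{k-2}^*$ together with the weighted inner product, I would establish an identity of the form
\[
\langle \Delta_{k-1} f, f \rangle \;=\; \sum_{\sigma} \langle \mathcal{L}_\sigma\, f_\sigma, f_\sigma \rangle_\sigma \;-\; \tfrac{k-1}{k}\sum_{\sigma} \|f_\sigma\|_\sigma^2 \;+\; \|d_{k-2}^* f\|^2,
\]
where $\sigma$ runs over $(k-2)$-faces and $\mathcal{L}_\sigma$ is the normalized graph Laplacian of $\lk_X(\sigma)$. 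The intuition is that the ``up'' piece $d_{k-1}^* d_{k-1}$ records disagreements of $f$ across edges of the link graphs, the ``down'' piece contributes a uniform diagonal shift, and the combinatorial factor $1/k$ arises because each $(k-1)$-face of $X$ is counted exactly $k$ times when summing the contribution over its $(k-2)$-subfaces.

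Next I would apply the spectral gap hypothesis to a harmonic $f$. Unpacking the condition $d_{k-2}^* f = 0$ shows that for every $(k-2)$-face $\sigma$, the restricted function $f_\sigma$ is orthogonal in the weighted $\ell^2$-inner product on $\lk_X(\sigma)$ to the constant function, i.e., to the zero-eigenspace of $\mathcal{L}_\sigma$. Since $\lk_X(\sigma)$ is connected, $\mathcal{L}_\sigma$ has a simple zero eigenvalue and its second eigenvalue equals its spectral gap $\lambda_2(\lk_X(\sigma)) > 1 - 1/k$, whence $\langle \mathcal{L}_\sigma f_\sigma, f_\sigma\rangle_\sigma > (1 - 1/k)\|f_\sigma\|_\sigma^2$. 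Inserting this strict inequality into the localization identity, together with $d_{k-1} f = 0$ and $d_{k-2}^* f = 0$, forces the right-hand side to be strictly positive unless every $f_\sigma$ vanishes; but the left-hand side is $\langle \Delta_{k-1} f, f\rangle = 0$. Hence $f_\sigma = 0$ for all $\sigma$, which in turn forces $f = 0$, yielding $H^{k-1}(X;\Q) = 0$.

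The main obstacle, as is usual in Garland-type arguments, is establishing the localization identity with the correct normalizations, orientations, and combinatorial factors, so that the diagonal shift is exactly $(k-1)/k$ and the threshold $1 - 1/k$ for the link spectral gap emerges sharply. Once this bookkeeping is carried out carefully the conclusion is essentially formal; I would also note that working over $\Q$ (equivalently, any characteristic-zero field) is essential, since the Hodge decomposition identifying $H^{k-1}$ with $\ker \Delta_{k-1}$ is not available in positive characteristic.
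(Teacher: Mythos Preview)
The paper does not supply its own proof of this theorem: it is quoted as a known result, attributed to Garland and to Ballmann--\'{S}wi\c{a}tkowski (their Theorem~2.1), and is then used as a black box in the proof of Theorem~\ref{thm:connectivity}. So there is no in-paper argument to compare against.

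Your outline is exactly the standard Garland/Ballmann--\'{S}wi\c{a}tkowski proof and is correct in structure: identify $H^{k-1}(X;\Q)$ with $\ker \Delta_{k-1}$ via Hodge theory, localize $\langle \Delta_{k-1} f, f\rangle$ as a sum over $(k-2)$-faces $\sigma$ of link-Laplacian quadratic forms in the restrictions $f_\sigma$, use the coclosedness condition $d_{k-2}^* f = 0$ to make each $f_\sigma$ orthogonal to the constants on $\lk_X(\sigma)$, and then invoke the spectral gap bound $\lambda_2(\lk_X(\sigma)) > 1 - 1/k$ to force $f = 0$. The only point I would flag is the precise shape of the localization identity you wrote down: the exact coefficients, and whether an additional $\|d_{k-2}^* f\|^2$ term appears with that sign, depend on the choice of weights and on whether one expands the full Laplacian or only the up-Laplacian on coclosed cochains. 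Different sources normalize differently, and the bookkeeping you yourself acknowledge as the main obstacle really does need to be pinned down against a fixed convention before the threshold $1 - 1/k$ emerges. But modulo that verification, the argument is the right one, and nothing further is needed for comparison with this paper.
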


Note that this special case of Garland's method gives a \emph{deterministic, combinatorial} condition to decide if rational homology is vanishing in some dimension. In his proof that $\Delta \sim \Delta(n, n^{-\alpha})$ has no homology below degree $d$, Kahle first shows that for any $i < d$, the $(i + 1)$-skeleton of $\Delta$ is pure dimensional. This means that every face of dimension at most $(i + 1)$ is contained in an $(i + 1)$-dimensional face. Next he uses the following result of Hoffman, Kahle, and Paquette to show that the link of every $(i - 1)$-dimensional face in the $(i + 1)$-skeleton has large spectral gap.

\begin{theorem}[Hoffman--Kahle--Paquette \cite{HKP}]\label{HKPspectralgap}
Let $G \sim G(n, p)$ be an Erd\H{o}s--R\'enyi graph. Then for any fixed $M \geq 0$ there is a constant $C := C(M)$ so that if 
\[p \geq \frac{(M + 1) \log n + C \sqrt{\log n} \log \log n}{n}\]
then $G$ is connected and 
$\lambda_2(G) \geq 1 - o(1)$ with probability $1 - o(n^{-M})$.
\end{theorem}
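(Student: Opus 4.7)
The plan is to analyze $\lambda_2(\mathcal{L})$ through the normalized adjacency $N := D^{-1/2}AD^{-1/2} = I - \mathcal{L}$. The vector $v := D^{1/2}\mathbf{1}/\|D^{1/2}\mathbf{1}\|$ is always a unit eigenvector of $N$ with eigenvalue exactly $1$, so the statement $\lambda_2(\mathcal{L}) \geq 1 - o(1)$ is equivalent, via the min-max principle, to the bound $|\langle Nw, w\rangle| = o(1)$ uniformly over unit vectors $w \perp v$; connectivity follows as soon as $\lambda_2(\mathcal{L}) > 0$. I would reduce this to three probabilistic inputs, each to hold with failure probability $o(n^{-M})$: (i) two-sided degree control $c\,np \leq d_i \leq K\,np$ for every vertex $i$, with $c, K$ depending only on $M$; (ii) the second-moment bound $\sum_i (d_i - np)^2 = O(n^2 p)$; and (iii) the spectral-norm bound $\|A - \mathbb{E}A\| = O(\sqrt{np})$.

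Given these, the main calculation is short. Writing $u := D^{-1/2}w$, the orthogonality $w \perp v$ becomes $\sum_i d_i u_i = 0$, which rearranges as $\sum_i u_i = -(np)^{-1}\sum_i (d_i - np)u_i$; Cauchy--Schwarz combined with (ii) then gives $\bigl(\sum_i u_i\bigr)^2 \leq \|u\|^2/p$. Expanding
\[ \langle Nw, w\rangle \;=\; u^\top A u \;=\; p\bigl(\sum_i u_i\bigr)^2 \;-\; p\|u\|^2 \;+\; u^\top(A - \mathbb{E}A)u, \]
and using the lower bound from (i) as $\|u\|^2 \leq 1/(cnp)$ together with (iii), the three summands are bounded respectively by $O(1/(np))$, $O(1/n)$, and $O(1/\sqrt{np})$, all of which are $o(1)$ as $np \to \infty$. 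This pins every non-leading eigenvalue of $N$ into $[-o(1), o(1)]$ and yields $\lambda_2(\mathcal{L}) \geq 1 - o(1)$.

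Inputs (i) and (ii) are standard: (i) follows from Chernoff tail bounds on each $d_i \sim \mathrm{Binomial}(n-1, p)$ combined with a union bound, while (ii) follows from concentration of the quadratic statistic $\sum_i(d_i - np)^2$ around its mean $\Theta(n^2 p)$ via a Hanson--Wright-type inequality for quadratic forms in independent Bernoullis. The additive term $(M+1)\log n + C\sqrt{\log n}\log \log n$ in the hypothesis on $p$ is calibrated precisely so that the relevant union bounds leave failure probability $o(n^{-M})$.

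The central difficulty is (iii). In the sparse regime $p = \Theta(\log n/n)$ a soft application of matrix Bernstein is not sharp enough, since a handful of atypically high-degree vertices can inflate $\|A\|$ well beyond $O(\sqrt{np})$. The standard workaround---due to F\"uredi--Koml\'os and refined by Kahn--Szemer\'edi and by Feige--Ofek---first regularizes $A$ by zeroing out the rows and columns of any vertex whose degree exceeds $K\,np$ (absent with the required probability by (i)), then bounds the spectral norm of the regularized matrix via an $\epsilon$-net on the unit sphere. For each pair $(x,y)$ of net vectors the quadratic form $x^\top(A - \mathbb{E}A)y$ is split into \emph{light couples}---pairs $(i,j)$ for which $|x_i y_j|$ is small, handled by Bernstein's inequality on $(A_{ij} - p)x_i y_j$---and \emph{heavy couples}, handled deterministically via a discrepancy estimate bounding the number of edges between any two vertex subsets. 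The principal obstacle, and the technical heart of the argument, is calibrating the net resolution, the truncation threshold, and the discrepancy estimate so that the final union bound over the $\epsilon$-net still absorbs into $o(n^{-M})$; this is exactly where the extra $C\sqrt{\log n}\log \log n$ slack in the hypothesis on $p$ is used.
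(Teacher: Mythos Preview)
The paper does not prove this theorem at all: it is quoted verbatim as a result of Hoffman, Kahle, and Paquette \cite{HKP} and is used as a black box in the proofs of Lemma~\ref{twoparameter} and Theorem~\ref{thm:connectivity}. There is therefore no ``paper's own proof'' to compare your proposal against.

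That said, your outline is a faithful high-level sketch of the Kahn--Szemer\'edi approach that \cite{HKP} itself employs: reduce control of $\lambda_2(\mathcal{L})$ to bounding $\|A - \mathbb{E}A\|$ on the complement of the Perron direction, establish degree regularity by Chernoff-plus-union-bound, and handle the centered adjacency via an $\epsilon$-net argument with the light/heavy couple dichotomy. The calibration you mention---that the additive $C\sqrt{\log n}\log\log n$ is exactly what is needed to absorb the union bound into $o(n^{-M})$---is indeed the point of \cite{HKP}. So your proposal is correct in spirit and matches the original source, but for the purposes of this paper the theorem is simply cited, and no proof is expected here.
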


Our proof of Theorem \ref{thm:connectivity} proceeds similarly to Kahle's approach. Working at a logarithmic scale $p = n^{-\alpha}$, rather than trying to establish a sharp threshold, makes the argument shorter here. We begin with the following lemma that can be seen as a corollary to Theorem \ref{HKPspectralgap} for a two parameter random graph model. Here we follow notation for \emph{multiparameter random complexes} first introduced in  \cite{CosFar}, and define the \defi{two parameter Erd\H{o}s--R\'{e}nyi random graph model} $G(n; p_0, p_1)$ to be the model for random graphs where we start with a ground set of $n$ vertices, keep each vertex independently with probability $p_0$, and add an edge between each pair of surviving vertices independently with probability $p_1$.

\begin{lemma}\label{twoparameter}
Let $G \sim G(n; p_0, p_1)$ be a two parameter Erd\H{o}s--R\'{e}nyi random graph with the property that $p_0n \rightarrow \infty$ as $n \rightarrow \infty$. Furthermore assume that $p_0$ and $p_1$ satisfy
\[p_0p_1 = \omega \left( \frac{\log n}{n} \right).\]
Then for any $M \geq 0$ and $\epsilon > 0$ we have the following.
\begin{itemize}
    
    \item For $n$ large enough $\lambda_2(G) \geq 1 - \epsilon$ with probability at least $1 - (np_0)^{-M}$.
    \item In particular $G$ is connected with high probability.
    \end{itemize}
\end{lemma}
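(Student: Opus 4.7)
The plan is to reduce the two-parameter model to the one-parameter Erdős--Rényi model and then apply Theorem \ref{HKPspectralgap}. The key observation is that sampling from $G(n; p_0, p_1)$ can be done in two stages: first sample a random vertex set $V' \subseteq [n]$ by keeping each vertex independently with probability $p_0$, and then, conditional on $V'$, sample an ordinary Erdős--Rényi graph $G(|V'|, p_1)$ on $V'$. Conditional on $N = |V'|$ the distribution of $G$ is thus exactly $G(N, p_1)$.

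First I would use a standard Chernoff bound to show that $N$ is sharply concentrated around $np_0$. Since $np_0 \to \infty$ by hypothesis, for any fixed $\delta \in (0, 1)$ we have
\[\Pr\bigl[N < (1-\delta)np_0\bigr] \leq \exp(-c_\delta\, np_0),\]
which decays faster than any polynomial in $np_0$, and hence is $o((np_0)^{-M})$ for every $M \geq 0$.

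Next, on the event $N \geq (1-\delta)np_0$, I would verify that $G(N, p_1)$ satisfies the hypothesis of Theorem \ref{HKPspectralgap} for the given $M$. Writing the HKP requirement as
\[p_1 \geq \frac{(M+1)\log N + C(M)\sqrt{\log N}\log\log N}{N},\]
and using $\log N \leq \log n$ together with $N \geq (1-\delta)np_0$, the right-hand side is at most
\[\frac{(M+1)\log n + C(M)\sqrt{\log n}\log\log n}{(1-\delta)np_0} = O\!\left(\frac{\log n}{np_0}\right).\]
The hypothesis $p_0 p_1 = \omega(\log n / n)$ is exactly the statement that $p_1$ grows faster than $\log n/(np_0)$, so this inequality is satisfied for $n$ sufficiently large. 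Theorem \ref{HKPspectralgap} then yields $\lambda_2(G) \geq 1 - o(1)$ with conditional probability at least $1 - o(N^{-M})$, and since $N = (1+o(1))np_0$ on the good event, this is at least $1 - o((np_0)^{-M})$.

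Combining the two error bounds and taking $n$ large enough so that both are below $\tfrac{1}{2}(np_0)^{-M}$ gives the claimed probability bound, and the first bullet implies $\lambda_2(G) > 0$, which forces connectivity (as the multiplicity of $0$ as an eigenvalue of $\mathcal{L}$ equals the number of connected components). The main thing to be careful about is the bookkeeping: one has to ensure that the error from the vertex-subsampling stage and the HKP error, which is stated in terms of $N$ rather than $np_0$, can both be absorbed into a single bound of the form $1 - (np_0)^{-M}$. This is not a serious obstacle because $np_0 \to \infty$ makes the Chernoff error super-polynomially small and the HKP error transfers from $N^{-M}$ to $(np_0)^{-M}$ up to a $(1\pm o(1))^M$ factor.
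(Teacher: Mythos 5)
Your proof is correct and takes essentially the same approach as the paper: condition on the size $N$ of the surviving vertex set, use a Chernoff bound to show $N$ is concentrated near $np_0$ (with super-polynomially small failure probability), and on the good event apply Theorem \ref{HKPspectralgap} to $G(N,p_1)$, translating the $N^{-M}$ error to $(np_0)^{-M}$. The only cosmetic difference is that the paper bounds both tails of $N$ to restrict the conditioning sum to a finite range $[(1/2)np_0, 2np_0]$, whereas you observe (correctly) that only the lower tail is needed since a larger $N$ only makes the HKP hypothesis easier to satisfy.
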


\begin{proof}
For any values of $n$, $p_0$, and $p_1$ the number of vertices of $G \sim G(n; p_0, p_1)$ is binomially distributed with $n$ trials and success probability $p_0$. Now if $G \sim G(n, p)$ has $k$ vertices then the edge set of $G$ is distributed as $G(k; 1, p_1) = G(k, p_1)$. In the average case $k = p_0 n$ and we can apply Theorem \ref{HKPspectralgap}. We make this argument precise. 

By conditioning on the size of $V(G)$, the probability that $G \sim G(n; p_0, p_1)$ has spectral gap smaller than $1 - \epsilon$ is at most 
\begingroup
\footnotesize
\[
\Pr(|V(G)| \geq 2np_0) + \Pr(|V(G)| \leq (1/2)np_0) + \sum_{k = (1/2)np_0}^{2np_0} \Pr\left(G \sim G(k, p_1) \text{ has } \lambda(G) < 1 - \epsilon \right)\left(\Pr(|V(G)| = k) \right).
\]
\endgroup

For $k \in [(1/2)np_0, 2np_0]$ and $p_1p_0 = \omega\left(\frac{\log n}{n} \right)$ we have that $p_1 = \omega \left(\frac{\log k}{k}\right)$. Hence for sufficiently large $n$ we have by Theorem \ref{HKPspectralgap} that the probability that $G \sim G(k, p_1)$ has spectral gap less than $1 - \epsilon$ is at most $(p_0n)^{-M - 2}$. It follows that for $n$ large enough we have
\begin{eqnarray*}
\sum_{k = (1/2)np_0}^{2np_0} \Pr\left(G \sim G(k, p_1) \text{ has } \lambda(G) < 1 - \epsilon \right)\left(\Pr(|V(G)| = k) \right) \leq 2np_0 (p_0n)^{-M - 2} = 2(p_0n)^{-M - 1}.
\end{eqnarray*}
Additionally by Chernoff bound since $V(G)$ is binomially distributed we have
\begin{eqnarray*}
\Pr(|V(G)| \geq 2np_0) + \Pr(|V(G)| \leq ((1/2)np_0)) \leq \exp(-np_0/10).
\end{eqnarray*}
Therefore for $n$ large enough the probability that $G \sim G(n; p_0, p_1)$ has spectral gap smaller than $1 - \epsilon$ for $p_0p_1 = \omega\left(\frac{\log n}{n} \right)$ is at most $(np_0)^{-M}$.

\end{proof}

\begin{proof}[Proof of Theorem \ref{thm:connectivity}]
Fix $\alpha$ with $1/(d + 1) < \alpha < 1/d$ and let $L > 0$. We show that if $G \sim G(n,n^{-{\alpha}})$ and $i < d$ then with high probability we have $\kappa_{\Q}^{i}(G) > L$. For this we use a first moment argument to bound the probability that there exists a set of $L$ vertices with the property that the subcomplex of $\Delta \sim \Delta(n, n^{-\alpha})$ obtained by deleting those $L$ vertices has homology in degree $i$. Consider a fixed set $\{x_1, \dots, x_L\}$ of vertices from $[n]$. The random complex induced on the remaining vertices is distributed as $\Delta(n - L, n^{-\alpha})$. We claim that with probability at most $o(n^{-L})$ such a complex has no homology in degree $i$.  For this we use Garland's method, and in particular we prove that the following two conditions occur with large enough probability:
\begin{enumerate}
    \item $\Delta \sim \Delta(n - L, n^{-\alpha})$ has pure-dimensional $(i + 1)$-skeleton.
    \item Every $(i - 1)$-dimensional face $\sigma$ of $\Delta \sim \Delta(n - L, n^{-\alpha})$ has that the graph of its link has spectral gap larger than $1 - \frac{1}{i + 1}$.
\end{enumerate}
Of the two, Condition (1) is easier to verify. For this we must check that for $j \leq i + 1$ every $j$-clique of $G \sim G(n - L, n^{-\alpha})$ is contained in an $(i + 2)$-clique. For a fixed set of $j \leq i + 1$ vertices, the probability that there is a clique induced on those vertices that is not contained in a $(j + 1)$-clique is at most
\begin{eqnarray*}
(n^{-\alpha})^{\binom{j}{2}}(1 - n^{-j\alpha})^{n - L - j} \leq n^{-\binom{j}{2} \alpha} \exp(- \Omega(n^{1 - j\alpha})).
\end{eqnarray*}
Therefore by applying a union bound the probability that there is a $j$-clique not contained in $(j + 1)$-clique is at most 
\[n^{j - \binom{j}{2} \alpha} \exp(-\Omega(n^{1 - j\alpha})).\]
Hence with probability at most
\[\sum_{j = 0}^{i + 1} n^{j - \binom{j}{2}\alpha} \exp(-\Omega(n^{1 - j\alpha})) \]
the $(i + 1)$-skeleton of $\Delta \sim \Delta(n - L, n^{-\alpha})$ is not pure $(i + 1)$-dimensional. This probability is exponentially small in $n$ when $\alpha < 1/(i + 1)$, which holds since $i + 1 \leq d$ and $\alpha < 1/d$.

Now we turn our attention to Condition (2). Given a set $\{v_1, ..., v_i\}$ of $i$ vertices in $[n] \setminus \{x_1, ..., x_L\}$, we have that 
\[\lk(v_1) \cap \cdots \cap \lk(v_i)\]
is distributed as $G(n - L - i; n^{-i \alpha}, n^{-\alpha})$.  We then have 
\[n^{-i \alpha}n^{-\alpha} = n^{-\alpha(i + 1)} = \omega\left(\frac{\log n}{n}\right)\]
since $i + 1 \leq 1/d$ and $\alpha < 1/d$. Therefore for every $M > 0$ we have for $n$ large enough
\[\Pr\left(\lambda_2(\lk(v_1) \cap \cdots \cap \lk(v_i)) < 1 - \frac{1}{i + 1}\right) = (n^{1 - i \alpha})^{-M}.\]
This means for a given set of $L$ vertices removed from $[n]$, and a given $(i - 1)$-dimensional face of $\Delta \sim \Delta(n - L, n^{-\alpha})$, the probability that the graph of its link has spectral gap smaller than $1 - \frac{1}{i + 1}$ is at most $(n^{1 - i \alpha})^{-M}$. 

Thus summing over all $\binom{n}{L}$ choices for the vertices to delete and all at most $\binom{n}{i}$ possible $(i - 1)$-dimensional faces, the probability that the large spectral gap condition fails for some set of $L$ deleted vertices and some $(i - 1)$-dimensional face is at most
$n^{L + i - M(1 - i)\alpha}$, which is $o(1)$ if we pick $M$ to be a sufficiently large constant. Thus the probability that Condition (2) fails is $o(1)$, completing the proof that rational homology in degree $i$ vanishes.
\end{proof}
As a corollary to our proof of Theorem \ref{thm:connectivity} we have the following result which will be convenient for the proof of Theorem \ref{corstaircase}. This result essentially follows from Kahle's proof of Theorem \ref{Kah2014}, but does not appear exactly in this form in \cite{Kah2014}.
\begin{corollary}\label{lemma2ndmomentabove}
Suppose $\alpha < 1/d$ and let $M > 0$. Then for $\Delta \sim \Delta(n,n^{-\alpha})$,
\[\Pr(\beta_{d - 1}(\Delta; \Q) = 0) \geq 1 - o(n^{-M}),\] and hence  \[\mathbb{E}(\beta_{d - 1}(\Delta; \Q)) = o(1).\]
\end{corollary}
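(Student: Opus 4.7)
The plan is to mimic the Garland-method argument used in the proof of Theorem \ref{thm:connectivity} but specialized to $i = d-1$ and without any vertex deletion, tracking the probability bounds carefully enough to obtain the $o(n^{-M})$ rate. Since we only need vanishing of $\tilde H_{d-1}(\Delta;\Q)$ for $\Delta$ itself (not all induced subcomplexes), the factor of $\binom{n}{L}$ that appeared in that proof is absent, which is what allows us to achieve an arbitrarily small polynomial probability of failure.

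First I would verify the two hypotheses of Garland's theorem for the $d$-skeleton of $\Delta \sim \Delta(n,n^{-\alpha})$. For Condition (1), the $d$-skeleton being pure $d$-dimensional amounts to saying that every $j$-clique with $j \leq d$ is contained in a $(d+1)$-clique. The same computation as in the proof of Theorem \ref{thm:connectivity} shows this fails with probability exponentially small in $n$ (the key inequality $\alpha < 1/d$ makes $\exp(-\Omega(n^{1-j\alpha}))$ super-polynomially small). For Condition (2), for each $(d-2)$-face $\sigma = \{v_1,\dots,v_{d-1}\}$ the link $\lk(v_1)\cap\cdots\cap\lk(v_{d-1})$ is distributed as $G(n-(d-1); n^{-(d-1)\alpha}, n^{-\alpha})$, and Lemma \ref{twoparameter} applies because
\[
n^{-(d-1)\alpha}\cdot n^{-\alpha} \;=\; n^{-d\alpha} \;=\; \omega\!\left(\frac{\log n}{n}\right)
\]
since $d\alpha < 1$. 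Thus Lemma \ref{twoparameter} gives for any constant $M' > 0$ that the spectral gap of this link falls below $1 - 1/d$ with probability at most $(n^{1-(d-1)\alpha})^{-M'}$.

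Next I would union bound over the at most $\binom{n}{d-1} = O(n^{d-1})$ possible $(d-2)$-faces. The total failure probability is at most $O\!\left(n^{d-1}\cdot n^{-M'(1-(d-1)\alpha)}\right)$; since $(d-1)\alpha < (d-1)/d < 1$ the exponent $1 - (d-1)\alpha$ is a positive constant, so choosing $M'$ sufficiently large as a function of $M$ makes this $o(n^{-M})$. Combined with the exponentially small failure of Condition (1), Garland's theorem then yields $\tilde H^{d-1}(\Delta;\Q) = 0$, and by the universal coefficient theorem $\beta_{d-1}(\Delta;\Q) = 0$, with probability at least $1 - o(n^{-M})$.

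For the expectation statement I would use a crude deterministic bound in the rare failure case. Since $\Delta$ has at most $\binom{n}{d}\leq n^d$ faces of dimension $d-1$, we always have $\beta_{d-1}(\Delta;\Q) \leq n^d$. Splitting the expectation over the events ``vanishing'' and ``not vanishing'' gives
\[
\mathbb{E}(\beta_{d-1}(\Delta;\Q)) \;\leq\; 0 \cdot \Pr(\text{vanishing}) + n^d \cdot o(n^{-M}),
\]
which is $o(1)$ upon taking $M > d$ in the probability bound just established. The main technical obstacle is really just bookkeeping: ensuring that $M'$ in Lemma \ref{twoparameter} can be made large enough relative to $M$ and to the $n^{d-1}$ union bound factor, which is straightforward because $1-(d-1)\alpha$ is a strictly positive constant.
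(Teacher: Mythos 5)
Your proposal is correct and follows essentially the approach the paper intends: the paper states the corollary as a by-product of the proof of Theorem \ref{thm:connectivity} (and of Kahle's argument in \cite{KahleRational}), and your argument is precisely that proof specialized to the top relevant dimension $i = d-1$ with no vertex deletion. The application of Lemma \ref{twoparameter} with a large enough $M'$ to absorb the $O(n^{d-1})$ union bound, together with the crude deterministic bound $\beta_{d-1}(\Delta;\Q)\leq n^d$ and a choice of $M>d$ for the expectation, are exactly the right bookkeeping steps.
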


\subsection{Projective dimension over $\kk$}
We next turn our attention to the projective dimension of $R/I_G$ over an arbitrary field, where new arguments are needed. The bouquet of spheres conjecture (Conjecture \ref{conj:spheres}) implies that the integral $i$th homology of $\Delta(n,p)$ is vanishing when $p = n^{-\alpha}$ and $\alpha < 1/(i + 1)$.  As far as we know, the best known vanishing result for $\tilde H_i(\Delta(n, p))$ with integer coefficients is an earlier result of Kahle from \cite{Kah}, which says that if $p = n^{-\alpha}$ and $\alpha < 1/(2i + 1)$ then $\tilde H_i(\Delta(n,p)) = 0$ with high probability. In fact Kahle's result says that if $\alpha < 1/(2i + 1)$, then $\Delta \sim \Delta(n, n^{-\alpha})$ is topologically $i$-connected, that is $\pi_j(\Delta) = 0$ for all $j \leq i$. With this in mind, for a graph $G$ we let $\hat{\kappa}^i(G)$ denote the minimum size of a set $S \subseteq G$ whose removal destroys this property, so that
\[
\hat{\kappa}^i(G) := \min_{S \subseteq G}\left\{|S| : \Delta(V(G) \setminus S) \text{ is not topologically $i$-connected}\right\}.
\]
We then have that for every field $\kk$ and every $i \geq 0$, 
\[\hat{\kappa}^i(G) \leq \kappa_{\kk}^i(G).\]
Indeed for any integer $t$ if $\hat{\kappa}^i(G) > t$, then for every set $S$ of size $t$, the flag complex over $G[V(G) \setminus S]$ is $i$-connected. It follows therefore by the universal coefficient theorem that the $i$th cohomology group over $\kk$ of any such complex is zero.

To establish the general bounds for $\pdim(R/I_G)$ over any field $\kk$ in Theorem \ref{PdimTheorem}, it suffices to show that for $i \leq \frac{d - 1}{2}$ we have $\hat{\kappa}^i(G) = \omega(1)$ for $G \sim G(n, n^{-\alpha})$ and $1/(d + 1) < \alpha < 1/d$. Our proof essentially follows the proof of Kahle from \cite{Kah}, although we require sharper bounds on the relevant bad events.  We prove the following lemma.
\begin{lemma}\label{lemmma:iconnected}
Suppose $d \geq 1$ is fixed, let $\alpha < 1/d$.  If $p > n^{-\alpha}$ then for any $M \geq 0$ we have with probability at least $1 - o(n^{-M})$ that $\Delta \sim \Delta(n, p)$ is topologically $\lfloor \frac{d - 1}{2} \rfloor$-connected. 
\end{lemma}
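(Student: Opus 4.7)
The plan is to follow Kahle's proof of topological $i$-connectedness for random clique complexes \cite{KahleRandomClique}, refining the tail probability throughout. Setting $i = \lfloor (d-1)/2 \rfloor$ yields $2i + 1 \leq d$, so $\alpha < 1/d$ implies $\alpha < 1/(2i+1)$, which is exactly Kahle's threshold; by monotonicity via a standard coupling it suffices to treat $p = n^{-\alpha}$.

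Kahle's argument proceeds via the Nerve Lemma. One constructs a subset $T \subseteq [n]$ such that the closed stars $\{\overline{\st}(w) : w \in T\}$ form a cover of $|\Delta|$ with the property that every non-empty intersection is contractible: $T$ is chosen to be itself a simplex of $\Delta$, so that for every $J \subseteq T$ the intersection $\bigcap_{w \in J} \overline{\st}(w)$ is the closed star of $J$, a cone over $\lk(J)$. The nerve is then the full simplex on $T$, hence contractible, and the Nerve Lemma yields $|\Delta|$ contractible and in particular $i$-connected. The set $T$, of size $i+2$, is built iteratively by choosing each successive $w_{j+1}$ from the common neighborhood $N(w_1) \cap \cdots \cap N(w_j)$, and the construction requires verifying that (a) these iterated common neighborhoods remain sufficiently large at every stage, and (b) the resulting union of stars covers all of $|\Delta|$.

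To replace Kahle's qualitative ``with high probability'' by $1 - o(n^{-M})$ for arbitrary $M$, we sharpen each probabilistic estimate. Setting $\epsilon := 1 - (2i+1)\alpha > 0$, every binomial mean governing the quantities in (a) and (b) is polynomial in $n$ of exponent at least $\epsilon$, so the standard Chernoff bounds on their deviations have the form $\exp(-\Omega(n^\epsilon))$, which is $o(n^{-M})$ for any fixed $M$. A union bound over the $O(1)$ iterative steps and over the $O(n)$ vertices and edges whose existence must be checked preserves this $o(n^{-M})$ tail.

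The main obstacle is condition (b): the set $T$ has only a constant number of vertices while $p = n^{-\alpha}$ is small, so a fixed $T$ has no hope of covering $|\Delta|$. Instead $T$ must be selected adaptively as in Kahle's two-stage argument, first using the earlier $w_j$ to handle simplices involving high-degree vertices and then using the later $w_j$ to take care of the remainder; the threshold $\alpha < 1/(2i+1)$ is what makes this balance feasible. Translating this argument to $o(n^{-M})$ tails requires only that the supporting concentration estimates on vertex degrees and iterated common neighborhood sizes, applied uniformly over $[n]$, remain $o(n^{-M})$, and this is immediate from $\exp(-\Omega(n^\epsilon)) = o(n^{-M})$ together with a union bound that contributes at most a polynomial-in-$n$ factor.
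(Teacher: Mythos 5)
Your proposal has a genuine gap in the description of the underlying deterministic argument, and the patch you propose cannot work.

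You describe Kahle's proof of $i$-connectivity as follows: choose a \emph{constant-size} clique $T$ of size $i+2$, cover $|\Delta|$ by the closed stars $\{\overline{\st}(w):w\in T\}$, observe that all intersections are closed stars of faces and hence contractible, and apply the Nerve Lemma. You then correctly observe that ``a fixed $T$ has no hope of covering $|\Delta|$'' in the sparse regime $p=n^{-\alpha}$, but claim this is repaired by an adaptive ``two-stage argument.'' No adaptive choice of a constant-size $T$ can repair it: the union $\bigcup_{w\in T}\overline{\st}(w)$ lives inside the union of the graph neighborhoods of the $|T|$ vertices of $T$, which has at most $|T|\cdot O(np)=O(n^{1-\alpha})=o(n)$ vertices, so it misses almost all of $\Delta$ regardless of how $T$ is chosen. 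The ``two-stage argument'' you invoke does not appear in \cite{KahleRandomClique} and cannot overcome this counting obstruction.

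The actual argument (and the one the paper uses) treats Kahle's deterministic criterion --- Theorem~\ref{commonneighborlemma}, i.e.\ Lemma~4.2 of \cite{KahleRandomClique} --- as a black box: $\Delta(G)$ is $k$-connected provided every set of $2k+1$ vertices of $G$ has a common neighbor and every intersection of $\ell\le 2k$ vertex stars is path connected. (The proof of that lemma uses the cover by the closed stars of \emph{all} $n$ vertices, whose nerve contains the $2k$-skeleton of the simplex on $[n]$; the intersections are only required to be path connected, not contractible.) One then verifies the two probabilistic hypotheses directly. The first (common neighbors) does reduce to a union bound with tail $\exp(-\Omega(n^{1-d\alpha}))$ as you suggest. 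The second (connectivity of star intersections) is not a Chernoff-type degree estimate: one reduces to showing that $\lk(v_1)\cap\cdots\cap\lk(v_\ell)$ is connected, which is a two-parameter random graph $G(n-\ell;p^\ell,p)$, and one must invoke the spectral/connectivity estimate of Lemma~\ref{twoparameter} (built on Theorem~\ref{HKPspectralgap}) to get the $1-o(n^{-M})$ tail uniformly over the $O(n^{2k})$ choices of the $v_i$. Your proposal never carries out this verification and, because of the misdescribed covering step, could not be completed as written.
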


The proof of this lemma essentially follows the proof of Theorem 3.4 from \cite{Kah}, and in particular we use the following key \emph{deterministic} lemma of \cite{Kah}. In what follows we use $\st(v)$ to denote the star of $v$, the subcomplex of $\Delta(G)$ generated by the set of faces that contain $v$.
\begin{theorem}\label{commonneighborlemma}\cite[Lemma 4.2]{Kah}
Let $k \geq 1$ and suppose that $G$ is a graph where every set of $2k + 1$ vertices have a common neighbor, and for every set of $\ell \leq 2k$ vertices $v_1, .., v_{\ell}$, the intersection 
\[\st(v_1) \cap \cdots \cap \st(v_{\ell})\]
is path connected. Then $\Delta(G)$ is topologically $k$-connected. 
\end{theorem}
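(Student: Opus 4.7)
The plan is to apply the deterministic Theorem \ref{commonneighborlemma} with $k = \lfloor (d-1)/2 \rfloor$, verifying that both of its hypotheses hold for $\Delta \sim \Delta(n,p)$ with probability at least $1 - o(n^{-M})$. The case $k = 0$ (covering $d \in \{1,2\}$) reduces to showing path-connectedness of $\Delta$, which follows from the classical Erd\H{o}s--R\'enyi theorem since $p > n^{-\alpha}$ with $\alpha < 1$ sits comfortably above the connectivity threshold $\log n / n$. So assume $k \geq 1$; a case split on the parity of $d$ gives $2k+1 \leq d$, whence $(2k+1)\alpha \leq d\alpha < 1$ under the standing hypothesis. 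The positivity of $1 - j\alpha$ for every $j \leq 2k+1$ is what drives all the tail bounds below.

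For the common-neighbor condition, fix a set $S \subseteq [n]$ of size $2k+1$. The probability that no vertex outside $S$ is adjacent to every vertex of $S$ is
\[
(1 - p^{2k+1})^{n - (2k+1)} \;\leq\; \exp\!\bigl(-\Omega(n p^{2k+1})\bigr) \;\leq\; \exp\!\bigl(-\Omega(n^{1 - (2k+1)\alpha})\bigr),
\]
which is super-polynomially small since $1 - (2k+1)\alpha > 0$. A union bound over the at most $n^{2k+1}$ such sets still yields failure probability $o(n^{-M})$.

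For the star-intersection condition, fix $1 \leq \ell \leq 2k$ and vertices $v_1, \dots, v_\ell$. Since each $\st(v_i)$ is the flag complex on $N[v_i]$, the intersection $\st(v_1) \cap \cdots \cap \st(v_\ell)$ is itself a flag complex, supported on the vertex set
\[
W \;=\; \bigl(N(v_1) \cap \cdots \cap N(v_\ell)\bigr) \;\cup\; \{v_j : v_j \sim v_i \text{ for all } i \neq j\}.
\]
Thus path-connectedness of the intersection is equivalent to connectedness of the induced graph $G[W]$. Any $v_j \in W$ is adjacent to every element of $N := N(v_1) \cap \cdots \cap N(v_\ell)$ (by definition of $N$), so it suffices to show that $N$ is nonempty and $G[N]$ is connected. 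Now $|N|$ is binomial with mean $(n-\ell)p^\ell$, and a Chernoff bound gives $|N| \geq \tfrac{1}{2} n p^\ell$ with failure probability $\exp(-\Omega(n^{1-\ell\alpha}))$. The event that $N = U$ for a specific set $U$ depends only on edges from $\{v_1,\dots,v_\ell\}$ to the rest, so conditional on $N$, the edges inside $N$ are independent $p$-Bernoullis, i.e.\ $G[N] \sim G(|N|, p)$. The standard cut bound then gives disconnection probability $O(|N|\exp(-p|N|/2)) = \exp(-\Omega(n^{1-(\ell+1)\alpha}))$, since $p|N| = \Omega(n^{1-(\ell+1)\alpha}) \gg \log n$. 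Both estimates are super-polynomially small, and a union bound over the $O(n^{2k})$ tuples $(v_1,\dots,v_\ell)$ and over $\ell \leq 2k$ beats $n^{-M}$.

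The main obstacle is bookkeeping: arranging that every union-bound step absorbs its polynomial factor $n^{2k+1}$ into a super-polynomially small tail. This is precisely why choosing $k = \lfloor (d-1)/2 \rfloor$ (rather than something larger) is crucial: it keeps $(2k+1)\alpha < 1$, forcing each $n^{1 - j\alpha}$ with $j \leq 2k+1$ to be a positive power of $n$ and making all relevant failure probabilities super-polynomially small regardless of the fixed $M$.
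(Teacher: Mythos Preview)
Your proposal does not prove the stated theorem. Theorem~\ref{commonneighborlemma} is a \emph{deterministic} statement: given the two combinatorial hypotheses on $G$ (common neighbors for every $(2k+1)$-set and path-connected star-intersections for every $\ell\le 2k$), one must deduce that the clique complex $\Delta(G)$ is topologically $k$-connected. The argument required is a nerve-type homotopy argument, and in the paper this result is simply quoted from Kahle \cite[Lemma~4.2]{KahleRandomClique} with no proof given.

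What you have written is instead a proof of Lemma~\ref{lemmma:iconnected}: you take Theorem~\ref{commonneighborlemma} as a black box and verify that its hypotheses hold for $G\sim G(n,p)$ with probability $1-o(n^{-M})$. As a proof of \emph{that} lemma your argument is correct and follows essentially the same route as the paper --- the only difference is that where the paper invokes the two-parameter spectral-gap lemma (Lemma~\ref{twoparameter}) to certify connectivity of the common link $N(v_1)\cap\cdots\cap N(v_\ell)$, you do it directly with a Chernoff bound on $|N|$ followed by a standard cut bound for $G(|N|,p)$. Both approaches work here.

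But as a proof of the statement you were assigned, this is circular: your opening sentence announces the plan to ``apply the deterministic Theorem~\ref{commonneighborlemma},'' which is precisely what you are supposed to be establishing.
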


\begin{proof}[Proof of Lemma \ref{lemmma:iconnected}]
We bound the probability that the assumptions of Theorem \ref{commonneighborlemma} fail to hold. First for $k = \lfloor \frac{d - 1}{2} \rfloor$ we bound the probability that there is a set of $2k + 1$ vertices with no common neighbor. Observe that $2k + 1 \leq d$, and so it suffices to show that for $\alpha < 1/d$ and $p > n^{-\alpha}$, every set of $d$ vertices in $G \sim G(n, p)$ has a common neighbor with very high probability. For a fixed set $T$ of $d$ vertices, the probability that the vertices of $T$ have no common neighbor in $G(n, p)$ is 
\[(1 - p^d)^{n - d} \leq \exp(-\Omega(n^{1 - d\alpha})).\]
Hence the probability that there exists a set of $d$ vertices with no common neighbor is at most $n^d \exp(-\Omega(n^{1 - d\alpha})) = \exp(-\Omega(n^{1 - d\alpha}))$.

Next we bound the probability that a collection of $\ell \leq 2k$ vertices $v_1, ..., v_{\ell}$ has the property that $\bigcap_{i=1}^\ell \st(v_i) = \st(v_1) \cap \st(v_2) \cap \cdots \cap \st(v_{\ell})$ is disconnected. We first observe that if $\bigcap_{i=1}^\ell \st(v_i)$ is disconnected then $\bigcap_{i=1}^\ell \lk(v_i) = \lk(v_1) \cap \cdots \cap \lk(v_{\ell})$ is disconnected. If these two sets agree then this is clear. Otherwise each vertex in $\bigcap_{i=1}^\ell \st(v_i) \backslash \bigcap_{i=1}^\ell \lk(v_i)$ belongs to $\{v_1, ..., v_{\ell}\}$, and each of these vertices is adjacent to every vertex in $\bigcap_{i=1}^\ell \lk(v_i)$. Hence if $\bigcap_{i=1}^\ell \lk(v_i)$ is nonempty and $\bigcap_{i=1}^\ell \st(v_i) \neq \bigcap_{i=1}^\ell \lk(v_i)$ then $\bigcap_{i=1}^\ell \st(v_i)$ is connected. We have already shown that with very high probability $\bigcap_{i=1}^\ell \lk(v_i)$ is nonempty, and so it remains to show that with very high probability it is connected.


For this observe that the 1-skeleton of $\bigcap_{i=1}^\ell \lk(v_i)$, which is all we have to consider to show path connectivity, is a random graph on a ground set of $n - l$ vertices with each vertex included independently with probability $p^{\ell} > p^{2k} > n^{-2k\alpha} \geq n^{-(d - 1) \alpha}$, and each edge included independently with probability $p > n^{-\alpha}$ conditioned on both endpoints being in the graph. Therefore $\bigcap_{i=1}^\ell \lk(v_i)$ is a two parameter random graph with $p_0p_1 > n^{-d \alpha} = \omega\left(\frac{\log n }{n}\right)$ and $p_0n > n^{1 - (d - 1) \alpha}$, where the latter quantity tends to infinity with $n$ as $\alpha < 1/(d - 1)$. Thus Lemma \ref{twoparameter} applies and we conclude that for any constant $M$  the complex $\bigcap_{i=1}^\ell \lk(v_i)$ is connected with probability at least $1 - o(n^{-M})$.  The result then follows from Theorem \ref{commonneighborlemma}.

\end{proof}


\begin{proof}[Proof of Theorem \ref{PdimTheorem} Part (1)]
Suppose $p = n^{-\alpha}$ with $1/(d+1) < \alpha < 1/d$.  By Proposition \ref{prop:pdim} we have that $\pdim(R/I_G) \geq n-(d+1)$ with high probability for any choice of coefficients $\kk$.  For the other inequality note that Lemma \ref{lemmma:iconnected}, along with Theorem \ref{commonneighborlemma}, implies that $\kappa_{\kk}^i(G) \geq \hat{\kappa}^i(G) = \omega(1)$ for $i \leq \frac{d - 1}{2}$. Hence by Lemma \ref{combinatorialpdim} we have that $\pdim(R/I_G) \leq n-(d/2)$ with high probability.
\end{proof}

\subsection{Depth, Cohen--Macaulay properties, and extremal Betti numbers}
Next we establish some further algebraic corollaries of our results. Recall that the \emph{depth} of the module $R/I_G$ is given by the maximum length of a regular sequence in $R/I_G$, and can also be defined by the Auslander-Buchsbaum formula as
\[\pdim(R/I_G) + \depth(R/I_G) = n. \]

Hence as an immediate corollary to Theorem \ref{PdimTheorem} we get the following.

\begin{corollary}\label{cor:depth}
Suppose $p = n^{-\alpha}$ with $1/(d+1) < \alpha < 1/d$ for some $d \geq 1$ and let $G \sim G(n,p)$. Then for any coefficient field $\kk$ with high probability the $R$-module $R/I_G$ satisfies
 \[d/2 \leq  \depth(R/I_G) \leq d+1.\]
For the case $\kk = {\mathbb Q}$ we have with high probability that
 \[\depth(R/I_G) = d+1.\]
\end{corollary}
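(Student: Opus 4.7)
The plan is to deduce this corollary as a direct consequence of Theorem \ref{PdimTheorem} together with the Auslander--Buchsbaum formula, which for a finitely generated graded module $M$ over $R = \kk[x_1,\dots,x_n]$ reads $\pdim(M) + \depth(M) = n$. Since $R/I_G$ is a finitely generated graded $R$-module and $R$ is a polynomial ring in $n$ variables, Hilbert's syzygy theorem guarantees $\pdim(R/I_G) \leq n$, so Auslander--Buchsbaum applies unconditionally, and depth is simply $n - \pdim(R/I_G)$.

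With this rewriting, the two inequalities to be proved become the two inequalities of Theorem \ref{PdimTheorem}(1) read from the other side. Specifically, with high probability for any coefficient field $\kk$ we have
\[n - (d+1) \leq \pdim(R/I_G) \leq n - (d/2),\]
and subtracting from $n$ gives
\[d/2 \leq \depth(R/I_G) \leq d+1.\]
For the case $\kk = \mathbb{Q}$ the equality $\pdim(R/I_G) = n-(d+1)$ from Theorem \ref{PdimTheorem}(2) yields $\depth(R/I_G) = d+1$ with high probability, which is the second assertion.

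Since the argument is essentially a one-line subtraction, there is no real obstacle: everything nontrivial has already been absorbed into Theorem \ref{PdimTheorem}. The only subtlety worth flagging in the writeup is that the two events (the upper bound on $\pdim$ and the lower bound on $\pdim$) both hold with high probability, and with high probability their intersection does as well, so the asymptotic almost sure statement about depth is justified.
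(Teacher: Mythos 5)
Your proof is correct and matches the paper's approach exactly: the paper also deduces Corollary \ref{cor:depth} as an immediate consequence of Theorem \ref{PdimTheorem} via the Auslander--Buchsbaum formula $\pdim(R/I_G) + \depth(R/I_G) = n$. The remark about intersecting the two high-probability events is a reasonable extra word of care but not something the paper belabors.
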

 
 This leads us to a good description of the (lack of) Cohen--Macaulay properties of $R/I_G$ and a proof of Corollary \ref{cor:depthCM}. For this recall that an $R$-module $M$ is \emph{Cohen--Macaulay} (over the field ${\mathbb K}$) if $\dim(M) = \depth(M)$. 
 
\begin{proof}[Proof of Corollary \ref{cor:depthCM}]
As usual assume $p = n^{-\alpha}$ for $1/(d+1) < \alpha < 1/d$ and let $G \sim G(n,p)$. From Corollary \ref{cor:depth} we see that $\depth(R/I_G) \leq d+1$ for any field of coefficients, with equality in the case that $\kk = {\mathbb Q}$. From Corollary \ref{cor:Krulldim} we know that with high probability the Krull dimension of $R/I_G$ is at least $2d+1$, which proves the first part of the statement.  Furthermore, since $d \geq 1$ we have that $\depth(R/I)$ is less than the Krull dimension of $R/I$ with high probability. From this it follows that $R/I_G$ is not Cohen--Macaulay over any field $\kk$.
\end{proof}

This complements results from \cite{ErmYan} where the question of Cohen--Macaulay properties was considered for the regime $1/n^{2/3} < p < \large(\frac{\log n}{n}\large)^{2/k+3}$. 

Our results also lead to an understanding of the extremal Betti numbers of random quadratic ideals.  Recall that a nonzero graded Betti number $\beta_{i,i+k}(R/I)$ is said to be \emph{extremal} if $\beta_{i',i'+k'}(R/I) = 0$ for all pairs $(i',k') \neq (i,k)$ with $i' \geq i$ and $k' \geq k$. In \cite{HKP} it is shown that for any integer $r$ and $b$ with $1 \leq b \leq r$ there exists a graph $G$ for which $\reg(R/I_G) = r$ and such that $R/I_G$ has $b$ extremal Betti numbers.   As a corollary of our results we see that for the case of random coedge ideals the extremal Betti numbers are very easy to describe.

\begin{proof}[Proof of Corollary \ref{cor:extremal}]
As above suppose $p = n^{-\alpha}$ with $1/(d+1) < \alpha < 1/d$.  For $G \sim G(n,p)$ we have seen that with high probability $\beta_{n - (d + 1), n}(R/I_G) \neq 0$, and in fact this must be an extremal Betti number by Hochster's formula.  From Theorem \ref{RegularityTheorem} we see that if $k \geq d+2$ then with high probability $\beta_{i,i+k} = 0$ and hence there can be no extremal Betti numbers in rows larger than $d+1$.  Since we are assuming $\kk = {\mathbb Q}$, part (2) of Theorem \ref{PdimTheorem} in particular implies that if $i > n-(d + 1)$ then $\beta_{i,j} = 0$ and hence there can also be no extremal Betti numbers in rows smaller than $d+1$.  The result follows.
\end{proof}

In \cite{BCP} it is shown that taking the revlex generic initial ideal preserves extremal Betti numbers. This implies that $I$ has a single extremal Betti number if and only if $\text{gin}(I)$ does.  From this we can conclude that if $R/I$ is Cohen--Macaulay then $R/I$ has an extremal number.   Our random coedge ideals provide examples of non Cohen--Macaulay rings $R/I_G$ that have a single extremal Betti number.

We can summarize our results in this section in terms of the Betti table of $R/I_G$.  As usual suppose $p=n^{-\alpha}$ for $1/(d + 1) < \alpha < 1/d$ and let $G \sim G(n,p)$.  Then asymptotically the right-hand side of the Betti table of $R/I_G$ is given by the diagram in Table \ref{tbl:rightside}.  For the case that $\kk = \Q$, we therefore have Corollary \ref{cor:extremal}.

\begin{table}[h]
    \centering
    \begin{tabular}{c|ccccccccc}
        $\cdot$ & $n - (d+1)$ & $n - d$ & $\cdots$ & $n - (d + 3)/2$ & $n - (d + 1)/2$ & $\cdots$ & $n - 2$ & $n - 1$ \\ \hline
        1 & {0} & {0} & $\cdots$ & {0} & {0} & $\cdots$ & {0} & {0}\\
        2 & {0} & {0} & $\cdots$ & {0} & {0} & $\cdots$ & {0} & -\\
        $\vdots$ & $\vdots$ & $\vdots$ & $\vdots$ & $\vdots$ & $\vdots$ & $\vdots$ & $\vdots$ & $\vdots$  \\
        $\frac{d + 1}{2}$ &{0} & {0} & $\cdots$ & {0} & {0} & $\cdots$ & - & -\\ 
        $\frac{d + 3}{2}$ & $0^{\dagger}$ & $0^{\dagger}$ & $\cdots$ & $0^{\dagger}$ & - & $\cdots$ & - & -\\ 
        $\vdots$ & $\vdots$ & $\vdots$ & $\vdots$ & $\vdots$ & $\vdots$ & $\vdots$ & $\vdots$ & $\vdots$ \\
        $d$ & $0^{\dagger}$ & $0^{\dagger}$ & $\cdots$ & - & - & $\cdots$ & - & -\\ 
        $d + 1$ & $*$ & - & $\cdots$ & - & - & $\cdots$ & - & -\\ 
        
    \end{tabular}
    \caption{The right-hand side of the Betti table of $R/I_G$, where $p=n^{-\alpha}$ with $1/(d - 1) < \alpha < 1/d$. A $0$ indicates that the entry has been shown to be zero over any field, and a $0^{\dagger}$ indicates that the Betti number is provably zero when $\kk =\Q$.  We use $*$ to indicate a nonzero Betti number, which is extremal in the case $\kk = \Q$.}
    \label{tbl:rightside}
\end{table}

\section{Distribution of Betti numbers}\label{sec:log}


In this section we study the distribution of nonzero entries in the Betti table of $R/I_G$ for $G \sim G(n,p)$ as above.  This is inspired by results of Erman and Yang \cite{ErmYan}, and in particular a theorem regarding the density of nonzero entries in the Betti table of $R/I_G$ for certain values of $p$. We discussed this result in Section \ref{sec:intro} but we provide the details here.  

For any $R$-module $M$ and integer $k \geq 1$, define the parameter $\rho_k(M)$ as
\[\rho_k(M) = \frac{|\{i\in [0, \pdim(M)] \text{ where $\beta_{i,i+k}(M) \neq 0$}\}|}{\pdim(M) + 1}.\]

Recall that $\pdim(M)$ denotes the projective dimension of $M$. If $\{I_n\}$ is a family of ideals where $\pdim(I_n) \rightarrow n$ and $k$ is some fixed integer, a motivating question in \cite{ErmYan} is to determine what conditions will guarantee that  $\rho_k(R/I_n) \rightarrow 1$ as $n \rightarrow \infty$. A main result of \cite{ErmYan} is the following answer for the case of coedge ideals in certain regimes for the parameter $p$.

\begin{theorem}[Theorem 1.3 of \cite{ErmYan}]\label{thm:density}
Fix some $d \geq 1$. Let $G \sim G(n,p)$ with $\frac{1}{n^{1/d}} \ll p \ll 1$. For each $1 \leq k \leq d+1$ we have 
\[\rho_k(R/I_G) \rightarrow 1\]
in probability.
\end{theorem}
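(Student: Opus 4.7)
The plan is to exhibit, for each $1 \leq k \leq d+1$ and for all but $o(\pdim(R/I_G))$ indices $i \in [0, \pdim(R/I_G)]$, an induced subcomplex $\Delta(S) \subseteq \Delta(G)$ with $|S| = i+k$ and $\tilde H_{k-1}(\Delta(S); \kk) \neq 0$; Hochster's formula (Theorem \ref{thm:Hochster}) then gives $\beta_{i,i+k}(R/I_G) \neq 0$, and the density $\rho_k(R/I_G)$ tends to $1$ in probability.

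The key gadget is the boundary of the $k$-dimensional cross-polytope, realized as the graph on $2k$ vertices $\{v_1, w_1, \dots, v_k, w_k\}$ with all edges present except the $k$ antipodal pairs $\{v_j, w_j\}$; its clique complex is homeomorphic to $S^{k-1}$, so has $\tilde H_{k-1}(-;\kk) = \kk$ for every field. \textbf{Step 1:} I would first show that $G \sim G(n,p)$ contains such an induced subgraph with high probability. A first moment computation gives expected count $\Theta\bigl(n^{2k}p^{2k(k-1)}(1-p)^k\bigr)$, and for $k \leq d+1$ the hypothesis $p \gg n^{-1/d}$ yields $p \gg n^{-1/(k-1)}$, so this count tends to infinity; a standard second moment argument then upgrades this to existence a.a.s. \textbf{Step 2:} Given such an induced cross-polytope $X$, let $\mathrm{Iso}(X)$ be the set of vertices outside $X$ adjacent to none of the $2k$ vertices of $X$. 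A given vertex lies in $\mathrm{Iso}(X)$ with probability $(1-p)^{2k} = 1-o(1)$, and since these events depend only on edges between $V\setminus X$ and $X$, they are independent of the event used to exhibit $X$; a Chernoff bound then yields $|\mathrm{Iso}(X)| = (1-o(1))n$ a.a.s. \textbf{Step 3:} For any $T \subseteq \mathrm{Iso}(X)$ with $|T| = i-k$, the induced subcomplex splits as $\Delta(X \cup T) = \Delta(X) \sqcup \Delta(T)$. For $k \geq 2$ reduced homology is additive on disjoint unions, so $\tilde H_{k-1}(\Delta(X \cup T); \kk) \supseteq \tilde H_{k-1}(\Delta(X); \kk) \neq 0$, while for $k=1$ the subcomplex is visibly disconnected. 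This realizes nonzero $\beta_{i, i+k}(R/I_G)$ for every $i \in [k,\, k + |\mathrm{Iso}(X)|] \supseteq [k,\, (1-o(1))n]$, and since $\pdim(R/I_G) \leq n$ this covers all but $o(\pdim(R/I_G))$ of the admissible indices, giving $\rho_k(R/I_G) \to 1$.

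The main obstacle is Step 1 at the borderline $k = d+1$: here the expected number of induced cross-polytopes is only a small polynomial in $n$, and the second moment calculation requires carefully enumerating the ways two labeled cross-polytopes can share a prescribed number of vertices and edges, showing that the variance is dominated by the square of the mean. A secondary bookkeeping issue is that the construction fails to cover a few extreme indices (those $i < k$ and those $i$ within $o(n)$ of $\pdim(R/I_G)$), but since there are at most $O(1) + o(n)$ of these they are absorbed into the $o(1)$ correction in $\rho_k \to 1$.
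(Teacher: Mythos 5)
This statement is cited in the paper from \cite{ErmanYang} rather than proved there, but the introduction explicitly identifies Erman and Yang's main tool for results of this type: detecting high-dimensional cross-polytopes as induced subcomplexes. Your proposal reconstructs exactly that argument — an induced cross-polytope on $2k$ vertices (whose clique complex is $S^{k-1}$, hence has nonvanishing $\tilde H_{k-1}$ over every field) padded out by vertices isolated from it and fed into Hochster's formula — and the outline is correct, with the second-moment/strict-balancedness and two-round-exposure technicalities you flag being standard to fill in.
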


As we have seen, for these values of $p$ we have that $\pdim(R/I_G) \rightarrow \infty$ as $n \rightarrow \infty$. Hence Theorem \ref{thm:density} says that for any $k \leq d+1$, as $n \rightarrow \infty$ the density of nonzero entries in the $k$th row of the Betti table of $R/I_G$ approaches one as $n \rightarrow \infty$.

We wish to have a better understanding of the magnitude of these nonzero Betti numbers. Our Theorem \ref{corstaircase} describes a dichotomy in the \emph{normalized} Betti numbers $\overline \beta_{i,j}(R/I_G) = \frac{\beta_{i,j}(R/I_G)}{\binom{n}{j}}$ in these rows, for the case of ${\mathbb K} = {\mathbb Q}$.  In particular we see that if $\alpha$ satisfies $1/(d+1) < \alpha < 1/d$ then for any $k \leq d+1$ the $k$th row of the normalized Betti table of $R/I_G$ will have an interval where $\overline{\beta}_{i,j} \rightarrow \infty$ if $i$ is in the interval, but where $\overline{\beta}_{i,i+k} \rightarrow 0$ otherwise.
  We refer to Figure \ref{summary} for an illustration of this phenomenon.  We next turn to the proof of Theorem \ref{corstaircase}.

\begin{figure}[h]
\centering
\includegraphics[width = 4 in]{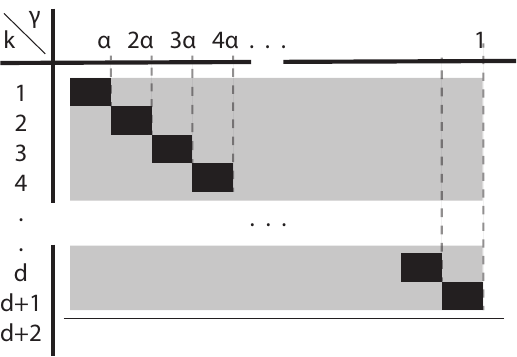}
\caption{The normalized Betti table of $R/I_G$ where $1/(d + 1) < \alpha < 1/d$ and $G \sim G(n, n^{-\alpha})$.  Here the columns $\gamma \in [0,1]$ are indexed on a logarithmic scale $\log_n$ and we see $\overline \beta_{i,i+k}$ with $i = n^\gamma$. Within the darkened band in each row the normalized Betti numbers tend to infinity, outside the darkened band they tend to zero (but are nonvanishing by \cite{ErmYan}). Beyond row $d+1$ all entries are zero (see Theorem \ref{RegularityTheorem}), }\label{summary}.
\end{figure}

\begin{proof}[Proof of Theorem \ref{corstaircase}]
Let $1/(d+1) < \alpha < 1/d$ and fix $k \leq d + 1$.  We first prove the second part of the statement regarding the vanishing of $\overline{\beta}_{i,i+k}$ when $i = n^\gamma - k$ with 
$\gamma \in (0, 1) \setminus [(k - 1)\alpha, k \alpha]$.  By Hochster's formula we have
\[\beta_{i, i + k}(R/I_G) = \sum_{S \in \binom{[n]}{n^{\gamma}}} \beta_{k - 1}(\Delta(S); \Q).\]
For a fixed set $S$ of $i + k$ vertices an induced subcomplex of $\Delta \sim \Delta(n, n^{-\alpha})$ on vertex set $S$ is distributed as $\Delta \sim \Delta(i + k, n^{-\alpha})$. As $i = n^{\gamma} - k$, this is distributed as $\Delta(i, i^{-\alpha/\gamma})$, up to the constant $k$ which doesn't matter in the limit. As $i \rightarrow \infty$ and $\alpha/\gamma$ is outside of the interval $(1/k, 1/(k - 1))$ we have $\E(\beta_{k - 1}(\Delta(S); \Q)) = o(1)$ for a fixed set $S$. For the case $\alpha/\gamma < 1/k$ this follows from Corollary \ref{lemma2ndmomentabove}, and for $\alpha/\gamma > 1/(k - 1)$ from Lemma \ref{lemma2ndmomentbelow}. 

Thus by linearity of expectation and Hochster's formula, we have $\E(\beta_{i, i + k}(R/I_G)) = o(\binom{n}{i + k})$. From this it follows that \[\frac{\beta_{i, i + k}(R/I_G)}{\binom{n}{i + k}} \rightarrow 0,\]
with probability tending to one by Markov's inequality.

Next we consider $\overline{\beta}_{i,i+k}$ with $i=n^\gamma - k$ for the case $\gamma \in ((k - 1) \alpha, k \alpha)$. By essentially the same argument as above we can show that for a fixed set $S$ of size $n^{\gamma} - k$, we have  by Theorem \ref{Kah2014} that $\beta_{k - 1}(\Delta(S); \Q) \neq 0$. However, we need to be a bit more careful here to actually show that $\beta_{i, i + k}(R/I_G) = \omega(\binom{n}{i + k})$, which we require to conclude that $\overline{\beta}_{i,i+k}(R/I_G) \rightarrow \infty$.  In particular we need to say something about the magnitude of the (topological) Betti number $\beta_{k - 1}(\Delta(S); \Q)$. Fortunately, while estimates of this type are not in the statement of Theorem \ref{Kah2014}, they are in the proof that Kahle provides in \cite{Kah2014}. For completeness we include an argument estimating $\beta_{k - 1}(\Delta(S); \Q)$ here.

Recall that if $\Delta$ is a simplicial complex its \defi{$f$-vector} $(f_0, f_1, \dots )$ has entries $f_i$ given by the number of $i$-dimensional faces of $\Delta$.  By the definition of simplicial homology we can bound $\beta_d = \beta_d(\Delta, \Q)$ by
\[f_{d} - f_{d - 1} - f_{d + 1} \leq \beta_d \leq f_d.\]
\noindent
For $\Delta \sim \Delta(n, n^{-\alpha})$, recall that $f_m(\Delta)$ is the number of $(m + 1)$-cliques in $G \sim G(n, n^{-\alpha})$. A graph $H$ is said to be \defi{strictly balanced} if $\nu(H)$ is attained only by the graph $H$ itself and not by any of its proper subgraphs (see Section \ref{sec:randomgraphs} for a definition of $\nu(H)$). It is easy to see that $K_{m + 1}$ is strictly balanced with density $\nu(K_{m+1}) = m/2$. Therefore from \cite[Theorem 4.4.4]{AS} we have with high probability that the number of $m + 1$ cliques in $G \sim G(n, n^{-\alpha})$ is $\Theta(n^{m + 1 - \alpha \binom{m + 1}{2}})$. Combining this with the bounds on $\beta_d$ described above we have that with high probability $\beta_d(\Delta) = \Theta(n^{d + 1 - \alpha\binom{d + 1}{2}})$ when $\alpha < 1/d$ and $n^{d + 1 - \alpha\binom{d + 1}{2}}$ tends to infinity with $n$ when $\alpha < 2/d$.

Now let $S$ be a fixed set of $i + k = n^{\gamma}$ vertices from $[n]$. Then $\Delta(S) \sim \Delta(i, i^{-\alpha/\gamma})$, again ignoring the negligible constant $k$, when $G \sim G(n,n^{-\alpha})$. Since $\gamma \in (0, 1) \cap ((k - 1)\alpha, k \alpha)$, $1/k < \alpha/\gamma < 1/(k-1)$ and $i$ tends to infinity with $n$, we have that from the above discussion that $\beta_{k - 1}(S) = \omega(1)$ with high probability. From here we can show that $\overline{\beta}_{i, i + k}(R/I_G)$ diverges to infinity in probability.

Suppose for contradiction that there are constants $L$ and $\epsilon$ so that for all $n$ large enough, for $G \sim G(n, n^{-\alpha})$
\[\Pr(\overline{\beta}_{i, i + k}(R/I_G) \geq L) \leq 1 - \epsilon.\]
Then if we generate $G \sim G(n, n^{-\alpha})$ and choose a random subset $S$ of $i + k$ vertices and compute $\beta_{k - 1}(\Delta(S))$ we have
\[\Pr\left(\beta_{k - 1}(\Delta(S)) \geq \frac{2L}{\epsilon}\right) \leq 1 - \epsilon/2.\]
Indeed if $G$ is such that $\overline{\beta}_{i, i + k}(R/I_G) \leq L$ then by Markov's inequality and Hochster's formula the probability that $\beta_{k - 1}(\Delta(S)) \geq \frac{2L}{\epsilon}$ for $S$ a randomly selected set of $i + k$ vertices is at most $\frac{\epsilon}{2}$. Moreover the probability that $G$ has $\overline{\beta}_{i, i + k}(R/I_G) \geq L$ is at most $1 - \epsilon$. But $\Delta(S)$ selected by generating $G \sim G(n, n^{-\alpha})$ and then picking $S$ uniformly at random is distributed as $\Delta(i, i^{-\alpha/\gamma})$, and we know that $\beta_{k - 1}(\Delta(S)) = \omega(1)$ with high probability in this case, so we have a contradiction.


\end{proof}

\section{Further thoughts and open questions}\label{sec:further}

\subsection{Higher degree ideals}
Having studied algebraic properties of random quadratic monomial ideals, a natural question to ask is whether there is an analogous construction for equigenerated monomial ideals in higher fixed degree.  Indeed, one can see that a monomial ideal of degree $d$ corresponds to the Stanley--Reisner ideal of a simplicial complex $\Delta$ that has a complete $(d-1)$-skeleton with no empty simplices of dimension $d+1$ or larger, so that whenever the $d$-skeleton of a simplex appears in $\Delta$ we have that the simplex itself is contained in $\Delta$.

In fact there is already a well studied model for such complexes, given by the \defi{multiparameter model} for random simplicial complexes $X(n; p_0 p_1, p_2, p_3, ...)$, introduced by Costa and Farber in \cite{CosFar}. To sample from this model one starts with $n$ vertices as the ground set and includes each $k$-simplex independently with probability $p_k$, provided that all of its boundary facets are included. That is, each vertex is included independently with probability $p_0$, each edge between existing vertices is included independently with probability $p_1$, each triangle from the resulting graph is filled in independently with a 2-dimensional face with probability $p_2$, and so on. The model $\Delta(n, p)$ is exactly $X(n; 1, p, 1, 1, 1...)$.

With this in mind, for fixed $m \in \N$ define $\Delta_m(n, p)$ to be the multiparameter model with $p_m = p$ and $p_i = 1$ for $i \neq m$ to obtain a random complex in which every minimal excluded face is $m$-dimensional. Note that $\Delta_m(n, p)$ is the Linial--Meshulam model $Y_m(n, p)$, first introduced in \cite{LinMes}, together with ``filling in" all empty simplices of dimension larger than $d$.
The Stanley--Reisner ideal $I_\Delta$ of a complex $\Delta \sim\Delta_m(n, p)$ is generated by squarefree monomials of degree $m + 1$, and using Hochster's formula one could establish results regarding $\reg(R/I_\Delta)$ and $\pdim(R/I_\Delta)$. Based on results established for the multiparameter model in \cite{Fow} it seems reasonable to believe that $R/I_{\Delta}$ for $\Delta \sim \Delta_m(n, p)$ behaves similarly to $R/I_G$ for $G \sim G(n, p)$ in terms of having one extremal Betti number. Adapting standard methods from the literature on random simplicial complexes we suspect the following is true.
\begin{conjecture}
For $d \geq m \geq 1$ fixed suppose $\alpha$ 
satisfies \[\frac{1}{\binom{d + 1}{m}} < \alpha < \frac{1}{\binom{d}{m}},\]
and let $\Delta \sim \Delta_m(n, n^{-\alpha})$. Then with high probability he have that $\Q[x_1, ..., x_n]/I_{\Delta}$ has regularity and depth both equal to $d + 1$, and has one extremal Betti number given by $\beta_{n - (d + 1), n}$.
\end{conjecture}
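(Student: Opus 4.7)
The plan is to mirror the three-step strategy used for the quadratic case ($m = 1$) in the paper, replacing every tool with its analog for the multiparameter model $\Delta_m(n, p)$. The given range $1/\binom{d + 1}{m} < \alpha < 1/\binom{d}{m}$ should first be verified to be the regime in which $\tilde H_d(\Delta; \Q) \neq 0$ with high probability while $\tilde H_i(\Delta; \Q) = 0$ for all $i > d$. This is the higher-dimensional analog of Kahle's Theorem \ref{KahleRationalHomology} and is the foundation on which the rest of the argument rests. Once this ``rational homological dimension equals $d$'' property is in hand, the statement splits into three pieces in exactly the way it does for clique complexes.

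For regularity, Hochster's formula reduces $\reg(R/I_\Delta) = d + 1$ to showing (i) $\tilde H_d(\Delta; \Q) \neq 0$ w.h.p., and (ii) $\tilde H_k(\Delta(S); \Q) = 0$ for every induced subcomplex $\Delta(S)$ and every $k > d$. Part (ii) would follow from a multiparameter analog of Malen's Theorem \ref{DeterministicCollapsibility}, namely a deterministic local-degree criterion on $\Delta$ guaranteeing $(d + 1)$-collapsibility which is inherited by all induced subcomplexes, together with a verification that $\Delta_m(n, n^{-\alpha})$ satisfies this criterion w.h.p.\ in our range of $\alpha$. For the projective dimension, extend the invariants $\kappa_\Q^i$ from graphs to general complexes and apply the analog of Lemma \ref{combinatorialpdim}; it then suffices to prove $\kappa_\Q^d(\Delta) = 0$ (immediate from (i)) and $\kappa_\Q^{i - 1}(\Delta) = \omega(1)$ for every $i \leq d$. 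The latter would be carried out exactly as in Theorem \ref{thm:connectivity} via Garland's method: for a fixed set $T$ of at most $L$ vertices, $\Delta[V \setminus T]$ is distributed as $\Delta_m(n - L, n^{-\alpha})$, and one shows w.h.p.\ that its $(i + 1)$-skeleton is pure and that every $(i - 1)$-face has a link whose up-down Laplacian has spectral gap above the Garland threshold $1 - 1/(d - i + 1)$.

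Combined with Auslander--Buchsbaum this yields $\depth(R/I_\Delta) = d + 1$. The combination of $\reg = d + 1$ and $\pdim = n - (d + 1)$ then forces every extremal Betti number to lie at the single corner $(i, k) = (n - (d + 1), d + 1)$, and the entry there equals $\dim_\Q \tilde H_d(\Delta; \Q) \neq 0$ by Hochster, giving Corollary \ref{cor:extremal}'s analog. The main obstacle is the Garland step for $m \geq 2$: when $m = 1$ the link of an $(i - 1)$-face is a random graph whose spectral gap is controlled by the Hoffman--Kahle--Paquette estimate and its two-parameter variant Lemma \ref{twoparameter}, but for $m \geq 2$ the link is itself a random simplicial complex of multiparameter type, and one needs a concentration-of-spectral-gap result for the $m$-dimensional up-down Laplacian in that setting. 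A sharp enough version of this does not appear to be in the literature in the form required. The collapsibility criterion for the regularity upper bound will likewise need a nontrivial extension of Malen's global-degree argument beyond flag complexes, though this step seems comparatively more tractable.
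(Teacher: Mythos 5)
This statement is a \emph{conjecture} in Section 6 of the paper, not a theorem; the authors offer no proof, only the remark that results of Fowler on the multiparameter model make the conjectured behavior plausible. So there is no paper proof to compare against. What you have written is not a proof but a roadmap with honestly flagged gaps, and as a roadmap it is exactly the one the authors implicitly envision: replay the $m = 1$ argument with (i) a Kahle-type rational homology theorem for $\Delta_m(n, n^{-\alpha})$, (ii) a Malen-type deterministic collapsibility criterion inherited by induced subcomplexes to pin down regularity, and (iii) Garland's method applied to links of $(i-1)$-faces after deleting a bounded set of vertices to pin down $\kappa^i_\Q$ and hence projective dimension, with the extremal Betti number then forced to sit at the single corner $(n-(d+1), d+1)$.

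You correctly identify the two places where this breaks down as currently stated. First, for $m \geq 2$ the link of an $(i-1)$-face in $\Delta_m(n-L,n^{-\alpha})$ is no longer an Erd\H{o}s--R\'enyi graph but a nontrivial random simplicial complex, so the Hoffman--Kahle--Paquette spectral gap estimate and its two-parameter variant (Lemma \ref{twoparameter}) do not apply directly; one needs a concentration result for the higher up-down Laplacian in the multiparameter setting, which the paper does not supply. Second, Malen's global-degree criterion (Theorem \ref{DeterministicCollapsibility}) is proved specifically for clique complexes, and extending it to the class of complexes arising from $\Delta_m(n,p)$ (complete $(m-1)$-skeleton, all empty simplices of dimension $> m$ filled in) is not automatic. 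Both of these are genuine gaps, which is consistent with the statement remaining a conjecture. The one thing I would add is that you should also verify, at the outset, that in the stated range of $\alpha$ the dimension of $\Delta_m(n,n^{-\alpha})$ is bounded with high probability (the analog of Corollary \ref{cor:cliques}); otherwise even the easy upper bound $\reg \leq \dim + 1$ is not available. The appearance of $\binom{d+1}{m}$ and $\binom{d}{m}$ as the threshold exponents is presumably tied to the essential density of the $m$-skeleton of a $d$- or $(d+1)$-simplex, and that computation should be made explicit before anything else.
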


\subsection{Other probability regimes}
In this paper our primary method for generating random monomial ideals involved the Erd\H{o}s--R\'enyi model of random graphs $G(n,p)$ where $p = n^{-\alpha}$ with $1/(d+1) < \alpha < 1/d$.  As we discussed earlier, one reason for this is that the clique number of $G \sim G(n, p)$ in this regime is bounded in terms of $\alpha$ so the resulting ring $R/I_G$ has bounded Krull dimension.

On the other hand one could also consider $R/I_G$ for other values of $p$. We note that for $p$ itself a constant it is not hard to see that the Krull dimension of $R/I_G$ is order $\Theta(\log n)$ and the regularity can be bounded below by $\Omega(\log n)$. Interestingly however, Theorem 3.1.1 of Greg Malen's Ph.D. thesis \cite{Malen} proves that for $p$ constant, integer homology of $\Delta \sim \Delta(n, p)$ is nonvanishing in more than one dimension. In particular, he shows that integer homology is nonvanishing in $\Theta(\log \log n)$ many dimensions. This implies that, unlike the sparse regime we consider here, $R/I_G$ will have more than one extremal Betti number when $G \sim G(n, p)$ and $p$ is a constant. In addition, the regime $p = 1 - \lambda/n$ for $\lambda$ constant is considered in \cite{BanYog}, where they establish results about the regularity of $R/I_G$ and prove it is order $\Theta(n)$. 

One question in particular that would be interesting to consider is whether or not there exists a choice of $p$ so that with high probability $G \sim G(n, p)$ has regularity strictly larger than the bound given by the maximum dimension of nonvanishing homology of $\Delta(G)$.  In \cite{EngOrl} Engstr\"om and Orlich study edge ideals of random \emph{unlabeled} graphs and establish results regarding their regularity in terms of vanishing of what they call \emph{parabolic} Betti numbers.



\subsection{Thresholds for higher $\kappa$}\label{sec:higherconn}
In Section \ref{sec:proj} we saw that the projective dimension of $R/I_G$ for a random coedge ideal $I_G$ can be understood in terms of the parameters $\kappa_{\kk}^i(G)$.  In particular the index of the last nonzero entry in row $i$ of the Betti table of $R/I_G$ is given by $n - i - \kappa_{\kk}^{i-1}(G)$.

For the case of $i=0$ (corresponding to the first row of the Betti table) this data is well understood in terms of well known thresholds for the vertex $\ell$-connectivity of a random graph (see Section \ref{sec:randomgraphs}).  In particular the threshold for $\ell$-connectivity for a random graph $G \sim G(n,p)$ corresponds to the threshold for $G$ to have minimum degree $\ell$, which is clearly a necessary condition.

There is analogous question for the other strands involving $\kappa_{\kk}^i$. This was studied for instance by Babson and Welker in \cite{BabWel}, where they establish sufficient conditions on $p$ and $i$ to conclude that $\Delta \in \Delta(n, p)$ has $\kappa_{\mathbb{F}_2}^i$ equal to the minimum degree of an $i$-dimensional face. As far as we know questions of this type for rational coefficients, where Garland's method may be used, have not been considered. For instance one might restrict to $p = n^{-\alpha}$ for $1/(d+1) < \alpha < 1/d$ and try to show that $\kappa_{\Q}^i$ is given by the minimum degree of an $i$-dimensional face for $i < d$. 

It is possible that these questions may be easier to handle for the case of $k=2$, corresponding to the second row of the Betti table of $R/I_G$. For instance, as pointed in \cite{BabWel}, it's not hard to see that $\kappa_\kk^1(G)$ is less than or equal to the minimum degree of an edge in $\Delta(G)$.  In addition, Abedelfatah and Nevo \cite{AbeNev} have shown that for any coedge ideal $I_G$ the Betti table of $R/I_G$ can never have gaps in the second row, whereas gaps can exist for any larger row.


\subsection{Other models for random ideals}

We note that the Erd\H{o}s--R\'enyi model of random graphs provides another model for random monomial ideals that is related to our study.  For any graph $G$ we can construct an ideal $I_G^*$ whose generators are given by complements of the maximal cliques in $G$. The ideal $I_G^*$ can be recovered as the Stanley--Reisner ideal of the simplicial complex $\nabla(G)$, where $\nabla(G)$ has facets given by $\{[n] \backslash e: e \in G\}$. In particular $I_G^*$ is the Alexander dual of the coedge ideal $I_G$ associated to $G$ (hence the notation). Moreover, for a graph $G$, the simplicial Alexander dual of $\Delta(G)$ is $\nabla(\overline{G})$ where $\overline{G}$ denotes the graph complement of $G$.

In this dual interpretation, if we choose $G \sim G(n,p)$ with $p = n^{-\alpha}$ and $n \rightarrow \infty$ we have a random squarefree monomial ideal where the number of variables $n$ goes to infinity, the degree of the maximal generators as well as Krull dimension of $R/I_G^*$ grow linearly in $n$, but where $R/I_G^*$ has a fixed and finite projective dimension. We also point out that if $1/(d + 1) < \alpha < 1/d$ then almost all generators of $I_{G}^*$ would have degree $n - (d + 1)$ so in some sense these ideals are close to being equigenerated.  Hence as we take $n \rightarrow \infty$ these ideals are similar to the model proposed in  \cite{DHKS} but exhibit quite different homological behavior.

It would be interesting to consider other models of random (hyper)graphs to see what algebraic properties their corresponding Stanley--Reisner rings $R/I_G$ enjoy.  In particular if we were interested in graphs $G$ for which $R/I_G$ has regularity larger than the top dimension of nonvanishing homology of $\Delta(G)$, we would need a model with the property that the clique complex of $G$ has vanishing homology beyond some dimension $d$ but for which induced subcomplexes on some fraction of $n$ vertices has homology in degree larger that $d$. We are not sure if there is a choice of values for the multiparameter model $X(n; p_0 p_1, p_2, p_3, ...)$ that satisfies this property.  We could also consider these questions for other well-studied models for random complexes such as the Vietoris--Rips or \v{C}ech complex of randomly sampled points in a metric space.



\subsection{Normal distributions in the rows of the Betti table}
In \cite{ErmYan}, Erman and Yang consider normal distributions in the asymptotic magnitude of the Betti numbers of $R/I_G$ for $G \sim G(n, c/n)$.  They establish the following for the first row of the Betti table.

\begin{theorem}\cite[Corollary 1.5]{ErmYan}
Fix a constant $0 < c < 1$ and let $\Delta \sim \Delta(n, c/n)$ be a random flag complex. If $i_n$ is an integer sequence converging to $i_n = n/2 + a\sqrt{n}/2$ then 
\[ \frac{\sqrt{2 \pi}}{(1 - c) 2^n \sqrt{n}} \beta_{i_n, i_n + 1}(R/I_G) \rightarrow \exp(-a^2/2).\]
\end{theorem}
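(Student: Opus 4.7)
The first-row Betti numbers admit a clean combinatorial description via Hochster's formula (Theorem~\ref{thm:Hochster}):
\[
\beta_{i, i+1}(R/I_G) \; = \; \sum_{S \in \binom{[n]}{i+1}} \dim_{\mathbb{Q}} \tilde{H}_0(\Delta(S); \mathbb{Q}),
\]
and $\dim_{\mathbb{Q}} \tilde{H}_0(\Delta(S); \mathbb{Q})$ is one less than the number of connected components of the induced subgraph $G[S]$. This reduces the theorem to controlling a random sum of component counts, indexed by the $\binom{n}{i+1}$ subsets $S \subseteq [n]$.

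The plan is first to compute the expected value. By linearity and the exchangeability of $G(n,p)$,
\[
\mathbb{E}\bigl[\beta_{i_n, i_n+1}(R/I_G)\bigr] \; = \; \binom{n}{i_n+1}\, \mathbb{E}\bigl[\tilde{\beta}_0(\Delta(S))\bigr]
\]
for any fixed $S$ of size $s := i_n + 1 \sim n/2$. Since $p = c/n$, the induced subgraph $G[S]$ is itself a subcritical Erd\H{o}s--R\'enyi graph with bounded mean degree $sp \to c/2$. In this regime cycles are rare and components are predominantly trees, so $\mathbb{E}[\tilde{\beta}_0(\Delta(S))]$ can be computed to leading order via the identity that for any graph $H$ the number of components equals $|V(H)| - |E(H)| + \mathrm{cyc}(H)$, together with the classical subcritical bound $\mathbb{E}[\mathrm{cyc}(G[S])] = O(1)$, followed by a Cayley-style enumeration of tree components to extract the correct leading constant. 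Combining this with Stirling's formula in local-limit form
\[
\binom{n}{n/2 + a\sqrt{n}/2} \; \sim \; \sqrt{\tfrac{2}{\pi n}} \; 2^n \; e^{-a^2/2}
\]
yields the claimed asymptotic for $\mathbb{E}[\beta_{i_n, i_n+1}(R/I_G)]$ after dividing by the prefactor $\sqrt{2\pi}/((1-c)\, 2^n \sqrt{n})$.

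The remaining step, upgrading this mean estimate to convergence in probability, is the main technical obstacle. The random variables $\tilde{\beta}_0(\Delta(S))$ are strongly correlated across different $S$, so I would proceed via a second moment calculation, bounding
\[
\mathrm{Var}\bigl(\beta_{i_n, i_n+1}(R/I_G)\bigr) \; = \; \sum_{S, T} \mathrm{Cov}\bigl(\tilde{\beta}_0(\Delta(S)),\, \tilde{\beta}_0(\Delta(T))\bigr),
\]
where by symmetry the covariance depends only on $|S \cap T|$. Most ordered pairs of subsets of size $\sim n/2$ have intersection tightly concentrated around $n/4$, and for such intersections the covariance should decay quickly enough (since tree-component contributions with small overlap are nearly independent) that the total variance is much smaller than the square of the mean. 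An alternative route is a bounded-differences (McDiarmid) inequality: resampling a single edge of $G$ changes $\tilde{\beta}_0(\Delta(S))$ by at most one, and only for the $\binom{n-2}{i_n - 1}$ subsets containing both endpoints of that edge. Either strategy closes the argument via Chebyshev's inequality.

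The hard part will be this variance bound: since $\mathbb{E}[\beta_{i_n, i_n+1}(R/I_G)]$ is itself of order $2^n \sqrt{n}$, the variance must be shown to be super-exponentially smaller, requiring precise quantitative control on how $\mathrm{Cov}(\tilde{\beta}_0(\Delta(S)), \tilde{\beta}_0(\Delta(T)))$ decays with $|S \cap T|$ in the subcritical regime, together with a careful count of how many pairs contribute at each overlap size.
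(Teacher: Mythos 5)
Your overall strategy -- reduce via Hochster's formula to component counts of induced subgraphs, use subcriticality to approximate $\tilde\beta_0(\Delta(S)) = |S| - |E(G[S])| - 1 + \operatorname{cyc}(G[S])$ with $\operatorname{cyc}$ of bounded expectation, apply a local-CLT/Stirling estimate to $\binom{n}{i_n+1}$, then concentrate -- does match the proof in Erman--Yang, as the present paper itself indicates in Section~\ref{sec:further} when it notes that ``$\beta_0(\Delta(G);\Q)$ can be well approximated by the number of vertices minus the number of edges'' is the key input to Corollary~1.5.

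The genuine gap is in the concentration step, where your two proposed routes are not equally viable, and the one you lean toward fails. Flipping a single edge $\{u,v\}$ can change $\beta_{i_n,i_n+1}$ by as much as $\binom{n-2}{i_n-1}$ (one for each of the relevant $S$ containing both $u$ and $v$), and there are $\binom{n}{2}$ edges. McDiarmid's exponent is $\Theta\bigl(t^2 / (\binom{n}{2}\binom{n-2}{i_n-1}^2)\bigr)$, while $\E[\beta_{i_n,i_n+1}] = \Theta\bigl(n\binom{n}{i_n+1}\bigr) = \Theta\bigl(n\binom{n-2}{i_n-1}\bigr)$. Plugging in $t = \epsilon\,\E[\beta]$ gives an exponent of order $\epsilon^2 n^2 / \binom{n}{2} = \Theta(\epsilon^2)$ -- a constant, not tending to infinity. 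So bounded differences is too crude here and does not close the argument. The second-moment route does work, and in fact more cleanly than you anticipate: because the edge count $E_S$ is a sum of independent Bernoulli indicators, the dominant term of $\operatorname{Cov}\bigl(\tilde\beta_0(\Delta(S)),\tilde\beta_0(\Delta(T))\bigr)$ is exactly $\operatorname{Cov}(E_S, E_T) = \binom{|S\cap T|}{2}p(1-p)$, and the double sum over $(S,T)$ can be evaluated exactly by the identity $\sum_{S,T}\binom{|S\cap T|}{2} = \binom{n}{2}\binom{n-2}{i_n-1}^2$. This yields $\operatorname{Var}(\beta_{i_n,i_n+1}) = O\bigl(n\binom{n}{i_n+1}^2\bigr)$, while $\E[\beta_{i_n,i_n+1}]^2 = \Theta\bigl(n^2\binom{n}{i_n+1}^2\bigr)$, so $\operatorname{Var}/\E^2 = O(1/n)$ and Chebyshev finishes; the cycle-rank cross terms are $O(\sqrt{n})$ per pair and thus negligible. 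One further thing to check carefully: the naive mean computation gives $\E[\tilde\beta_0(\Delta(S))] \approx s - \binom{s}{2}p \approx \frac{n}{2}(1 - c/4)$ for $s = i_n+1 \approx n/2$, and you should verify that this leading coefficient is actually consistent with the normalizing prefactor $\frac{\sqrt{2\pi}}{(1-c)2^n\sqrt{n}}$ in the statement -- the ``Cayley-style enumeration'' you invoke is really only needed to control the $O(1)$ cyclomatic correction, not to produce the leading $\Theta(n)$ constant.
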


In other words, in this first row of the Betti table (and for this choice of $p$) there is a normal distribution among the nonzero Betti numbers among some range of values centered at $i = n/2$. The authors of \cite{ErmYan} conjecture similar behavior along other rows.

\begin{conjecture}\cite[Conjecture 6.4]{ErmYan}
In the case where Theorem 1.3 (cited here as Theorem \ref{thm:density}) yields nonvanishing Betti numbers in row $k$ we conjecture that the $k$th row of the Betti table will be normally distributed in a manner similar to Corollary 1.5. 
\end{conjecture}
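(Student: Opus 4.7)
The plan is to follow the blueprint of Erman--Yang's Corollary 1.5 with Hochster's formula as the organizing principle. By Hochster's formula,
\[\beta_{i,i+k}(R/I_G) = \sum_{S \in \binom{[n]}{i+k}} \dim_{\kk} \tilde{H}_{k-1}(\Delta(S);\kk),\]
and since the induced subcomplex on any fixed $S$ is distributed as $\Delta(|S|,p)$, linearity of expectation gives $\E[\beta_{i,i+k}] = \binom{n}{i+k}\,\phi_k(i+k,p)$, where $\phi_k(m,p) := \E[\dim_{\kk}\tilde{H}_{k-1}(\Delta(m,p))]$. The Gaussian shape then enters through Stirling's approximation to the binomial coefficient: for $i_n + k = n/2 + a\sqrt{n}/2$, one has $\binom{n}{i_n+k} \sim \tfrac{2^n}{\sqrt{\pi n/2}}\exp(-a^2/2)$. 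If $\phi_k(m,p)$ is asymptotically slowly varying in $m$ over a window of width $\sqrt{n}$ around $n/2$, then after dividing by the appropriate normalization $2^n\sqrt{n}\,\phi_k(n/2,p)$ one recovers the Gaussian profile predicted by the conjecture.

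The first analytic step is therefore to estimate $\phi_k(m,p)$ for $m$ in a $\sqrt{n}$-neighborhood of $n/2$. The strategy is to exploit that in the regimes where Theorem \ref{thm:density} applies, only the $(k-1)$st Betti number contributes meaningfully, as the vanishing results of Theorem \ref{RegularityTheorem}, Lemma \ref{lemma2ndmomentbelow}, and Corollary \ref{lemma2ndmomentabove} suppress the other $\tilde\beta_j$'s in expectation. Consequently one can compute $\phi_k$ up to sign from the Euler characteristic:
\[\phi_k(m,p) \approx (-1)^{k-1}\left(\E[\chi(\Delta(m,p))] - 1\right) = (-1)^{k-1}\sum_{j \geq 1} (-1)^j \binom{m}{j+1} p^{\binom{j+1}{2}},\]
which is an explicit alternating sum over clique counts. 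In the relevant regime of $p$ only a bounded range of $j$ contributes to leading order, and one extracts the leading asymptotics in $m$. What must be checked is that this leading term, viewed as a function of $m = n/2 + O(\sqrt{n})$, equals $\phi_k(n/2,p)\cdot(1+o(1))$, so the only $a$-dependence in the final answer comes from the binomial coefficient.

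To upgrade the expectation to convergence in probability one applies the second-moment method. Writing $\beta_{i,i+k} = \sum_S X_S$ with $X_S = \dim_{\kk}\tilde{H}_{k-1}(\Delta(S);\kk)$, one must bound
\[\Var(\beta_{i,i+k}) = \sum_{r=0}^{i+k} \sum_{\substack{S,T\\ |S\cap T|=r}} \mathrm{Cov}(X_S,X_T).\]
For small overlap $r$, the subcomplexes $\Delta(S)$ and $\Delta(T)$ are essentially independent so $\mathrm{Cov}(X_S,X_T)$ is negligible; for large overlap, the number of such pairs $\binom{n}{i+k}\binom{i+k}{r}\binom{n-i-k}{i+k-r}$ is much smaller than $\binom{n}{i+k}^2$, controlling the total. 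Combined with the Euler characteristic estimate on the diagonal, one aims for $\Var(\beta_{i,i+k}) = o(\E[\beta_{i,i+k}]^2)$ which, via Chebyshev, yields convergence in probability.

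The main obstacle will be controlling the covariances $\mathrm{Cov}(X_S,X_T)$ in the large-overlap regime, because homology is a highly nonlinear and non-local functional of the underlying random graph. The natural workaround is to replace $X_S$ by the signed Euler contribution $\tilde\chi(\Delta(S))$ and compute the covariance of Euler characteristics, which is a polynomial expression in subgraph counts and therefore amenable to standard moment calculations; one then transfers back to homology using the vanishing of the other Betti numbers. A secondary difficulty is that the auxiliary vanishing estimates in Lemma \ref{lemma2ndmomentbelow} and Corollary \ref{lemma2ndmomentabove} must hold uniformly in $m$ over the entire $\sqrt{n}$-window around $n/2$, which will require tightening the quantitative tails in their proofs. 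If these two technical ingredients can be established in whatever regime of $p = p(n)$ gives nonvanishing row-$k$ Betti numbers, then the combination of Hochster's formula, Stirling's approximation, the Euler characteristic computation, and the second-moment estimate will yield the conjectured Gaussian profile.
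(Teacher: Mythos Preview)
The statement you are attempting to prove is a \emph{conjecture}, quoted from \cite{ErmanYang}; the paper does not prove it and in fact discusses it in Section~\ref{sec:further} as an open problem. There is therefore no ``paper's own proof'' to compare against.

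Your outline follows the natural template of Erman--Yang's Corollary~1.5, and the paper's own discussion agrees that this is the right shape of argument. But the two places where you write ``if these technical ingredients can be established'' are exactly the points the paper singles out as genuine obstructions, not routine bookkeeping. First, the vanishing inputs you invoke (Lemma~\ref{lemma2ndmomentbelow} and Corollary~\ref{lemma2ndmomentabove}) are proved only for $p=n^{-\alpha}$ with $\alpha$ strictly inside $(1/(d+1),1/d)$; to run the Euler-characteristic replacement for row $k$ one needs control of $\beta_{k}(\Delta(m,p);\Q)$ and $\beta_{k-2}(\Delta(m,p);\Q)$ at the critical scaling, e.g.\ $p=c/\sqrt{n}$ for $k=2$. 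The paper explicitly notes that this requires identifying the higher-dimensional analogue of the Erd\H{o}s--R\'enyi phase transition constant, and states that ``this critical constant appears to be unknown.'' Second, the paper points out (via Theorem~\ref{corstaircase}) that in the regime $p=n^{-\alpha}$ the bulk of the normalized Betti numbers in row $k$ lives near $i\in[n^{(k-1)\alpha},n^{k\alpha}]$, so the assumption that the distribution is centered at $i_n+k=n/2$ with $\phi_k$ slowly varying there is not automatic and depends delicately on the regime of $p$.

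In short, your plan is the right scaffolding, and the paper would not disagree with it; but what you label as deferred technicalities are the substance of the open problem.
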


Our Theorem \ref{corstaircase} is similar in spirit to this conjecture as it examines the asymptotic distribution of nonzero entries in a single row of the Betti table of $R/I_G$.  Based on this result, we can narrow down where the center of a normal distribution in any row must be (if such a distribution indeed exists).  In particular, for $1/(d + 1) < \alpha < 1/d$ and $k \leq d + 1$, one could look for some type of normal distribution on the magnitude of $\beta_{i, i + k}$ centered at some $i$ between $\Omega(n^{(k-1)\alpha})$ and $O(n^{k \alpha})$, that is in the boxes along the logarithmic diagonal in Figure \ref{summary}. 

We suspect that making precise statements along the lines of Corollary 1.5 of \cite{ErmYan} will involve first having a better understanding of the behavior of $\beta_{d-1}(\Delta; \Q)$ for $\Delta \sim (n, cn^{-1/d})$ where $c$ is a constant. Indeed the assumption that $c$ is between 0 and 1 in Corollary 1.5 of \cite{ErmYan} comes from the \emph{Erd\H{o}s--R\'{e}nyi phase transition} in $G(n, p)$. A classical result in random graph theorem due to Erd\H{o}s and R\'enyi \cite{ErdRen} is that $G(n, p)$ has a one-sided sharp threshold for the existence of cycles at $p = 1/n$. More precisely for $0 < c < 1$, the number of cycles in $G(n, p)$ is Poisson distributed with some bounded mean depending on $c$ while for $c > 1$, $G(n, p)$ will have $\Theta(n)$ cycles with probability tending to 1. This one-sided sharp threshold can naturally be phrased in terms of $\beta_1(\Delta(G); \Q)$ for $G \sim G(n, p)$ and implies that for $0 < c < 1$, $\beta_0(\Delta(G); \Q)$ can be well approximated by the number of vertices minus the number of edges. This is an important part of the proof of Corollary 1.5 of \cite{ErmYan}.

As a step toward settling Erman and Yang's conjecture one could try to prove that for $p = c/\sqrt{n}$ and $c$ a sufficiently small positive constant there is a normal distribution for $\beta_{i_n, i_n + 2}$ for $i_n$ sufficiently close to $n/2$. This in turn would involve finding a sufficiently small constant $c$ so that $\beta_1(\Delta; \Q)$ can be well approximated by $f_1(\Delta) - f_0(\Delta) - f_2(\Delta)$ for $\Delta(n, c/\sqrt{n})$. Based on behavior in other models of random complexes, in particular the Linial--Meshulam model, established by \cite{AroLin}, \cite{LinPel} it seems likely that such a constant exists and Theorem 1.1 of \cite{Kah2014} shows that it is at most $\sqrt{3}$. Recent work of the second author \cite{NewmanSharpThreshold} provides a lower bound on what this critical constant will be if it exists. 

As an alternative way to study the distribution of Betti numbers along a single row of the Betti table, a natural class of ideals to consider would be those which are equigenerated and have a \emph{linear resolution}. Indeed in this case the Betti table of $R/I$ has only a single nonzero row.  For the case of squarefree quadratic ideals this class corresponds to the coedge ideals of \emph{chordal} graphs by a well-known result of Fr\"oberg \cite{Fro}. Among the class of chordal graphs are the class of \emph{threshold} graphs, which admit a natural model of randomness determined by a parameter $p \in (0,1)$. In \cite{EngGoSta} Engstr\"om, Go, and Stamps study the coedge ideal of a random threshold graph and provide a formula for the expected value of each Betti number for any choice of $p$.  A natural extension of this work would be to consider random square-free \emph{stable ideals} generated in degree $d$ (which are $d$-linear), but as far as we know it is an open question to develop an analogous random model for this class of ideals.


\end{document}